\documentclass{article}
\usepackage[utf8]{inputenc}

\usepackage[margin=1in]{geometry}
\usepackage{amsfonts}
\usepackage{amsthm}
\usepackage{amsmath}
\usepackage{mathtools}
\usepackage{amssymb}
\usepackage{siunitx}
\usepackage{physics}
\usepackage{enumerate}
\usepackage{multicol}
\usepackage[noadjust]{cite}
\usepackage{caption}
\usepackage{subcaption}
\usepackage{graphicx}
\usepackage{tcolorbox}
\usepackage{dsfont}
\usepackage{xcolor}
\usepackage{tikz-cd}
\usetikzlibrary{backgrounds}
\usepackage{subfiles}
\usepackage{longtable}
\usepackage{hyperref}
\usepackage{bbm}
\newtheorem{theorem}{Theorem}
\newtheorem*{theorem*}{Theorem}

\newtheorem{conj}[theorem]{Conjecture}
\newtheorem{lemma}[theorem]{Lemma}
\newtheorem{defn}{Definition}

\newtheorem*{defn*}{Definition}
\newtheorem{propn}[theorem]{Proposition}

\newcommand{\B}[1]{\mathbb{#1}}
\newcommand{\C}[1]{\mathcal{#1}}
\newcommand{\oop}{\preccurlyeq}
\newcommand{\is}{\mathbbm{1}}

\usepackage[skip=7.5pt plus1pt, indent=20pt]{parskip}

\title{Branching Brownian motion with rank-based selection and reaction-diffusion equations}
\author{Jacob Mercer}
\date{}  

\begin{document}
\maketitle

\begin{abstract}
    We consider a family of branching-selection particle systems in which particles branch at time dependent rate $r$ and are killed with a probability which is dependent on their rank via some function $\psi$. We show that, under fairly minimal conditions, the hydrodynamic limit of such a system is given by the reaction-diffusion equation $U_t = \frac12 U_{xx} + r(t)G(U)$ with nonlinearity $G(U)$ which is a function of $\psi$. This is a significant generalisation of the well-studied $N$-BBM process, and is similar to the family of `$(b,D)$-BBM' processes described by Groisman \& Soprano-Loto in \cite{rankDependentBBM}. On the one hand, this allows us to understand common reaction-diffusion equations as limits of interacting particle systems with simple description. On the other hand, the asymptotic behaviour of solutions of the reaction-diffusion the PDEs can help us predict the asymptotic properties of the associated particle systems. We give general conditions under which the branching-selection particle system has an asymptotic velocity, and describe the velocity up to order $(\log N)^{-2}$; furthermore, we describe the connection between this velocity and the spreading speeds and travelling waves of the corresponding reaction-diffusion equation. This provides a partial weak selection principle.
\end{abstract}

\section{Introduction}

Reaction diffusion equations are a broad class of partial differential equations, which find applications in a number of areas of physical sciences, including ecology, cell biology, and flame propagation. We consider reaction-diffusion equations of the general form:
$$U_t = \frac12 U_{xx} + Y(x,t,U),$$
where $Y:\B{R}\times [0,\infty)\times \B{R}\to \B{R}$. A well-known example is the F-KPP equation, which was first studied in the 1930s independently by Fisher \cite{fisher} and Kolmogorov, Petrovskii, and Piscunov \cite{kpp} in the context of population genetics. The F-KPP equation is an equation of the form $U_t=\frac12U_{xx}+G(U)$ with $G(0)=G(1)=0$, $G(x)>0$ for $x\in (0,1)$, and $G'(x)\leq G'(0)$ for $x\in [0,1]$; a typical example being the Fisher equation $U_t=\frac12U_{xx}+U(1-U)$. An important property of the F-KPP equation is the existence of travelling wave solutions and travelling fronts; it is well known that the F-KPP equation $U_t=\frac12U_{xx}+G(U)$ has a travelling wave solution for every speed $c\geq \sqrt{2G'(0)}$. Thus the Fisher equation $U_t=\frac12U_{xx}+U(1-U)$ has travelling wave solution for all speeds $c\geq \sqrt{2}$. The situation is more complex in the general case, when $G$ is not positive everywhere on $[0,1]$. 

The PDE $U_t = \frac12 U_{xx}+ U(1-U)$ can also be understood as the hydrodynamic limit of an interacting particle system \cite{fkppScalingLimit} which can be described as follows: consider $N$ particles moving on $\B{R}$ as independent Brownian motions. At rate $N$, draw uniformly at random two particles from the population and move the leftmost of the two particles to the location of the rightmost particle.

Groisman and Soprano-Loto later generalised this when they describe their so-called `$(b,D)$-BBM' process. In the $(b,D)$-BBM, particles behave as independent Brownian motions on $\B{R}$, with branching described by the function $b:[0,1]\to[0,\infty)$ and deletion by the increasing function $D:[0,1]\to[0,1]$. More precisely, the $j$\textsuperscript{th} leftmost particle branches at rate $b\big(\frac{j-1}{N_t-1}\big)$, where $N_t$ is the number of particles in the system at time $t$. Simultaneously to the $j$ \textsuperscript{th} particle branching, the $i$\textsuperscript{th} leftmost particle, where $i< j$, is killed with probability $D\big(\frac{i}{j-1}-\big)-D\big(\frac{i-1}{j-1}-\big)$ and this includes the possibility that no particle is deleted. The family of processes they describe is relatively general, and in particular includes branching Brownian motion (BBM), and the fixed population size $N$-BBM process (see \cite{toughBeres},\cite{hydroNBBM},\cite{maillard}), among many others. In their work, they conjecture that the particle system has the hydrodynamic limit:
$$u_t(x,t) = \frac12 u_{xx}(x,t) + u(x,t)\left(b(U(x,t))-\int_{U(x,t)}^1 b(r)\frac1r D(dr)\right),$$
where $u$ is the limiting empirical density and $U$ the corresponding cumulative distribution. 

Another generalisation of the $N$-BBM is studied by Atar \cite{atar}, who considers an \textit{injection-branching-selection} system, in which the branching and selection steps are decoupled. The system proposed in \cite{atar} starts with $N$ Brownian motions on $\B{R}$, each branching at rate $\kappa\geq 0$, and subsequently particles are added to the system (\textit{injected}) according to a random point process $\alpha^N(dx,dt)$ and leftmost particles are deleted according to a random  process $J^N$. As with the $(b,D)$-BBM, this setup allows for a variable number of particles. Subsequently Atar describes a weak formulation of the corresponding hydrodynamic limit.

In this paper we study a branching-selection particle system with rank-dependent selection which can be described as follows. $N$ particles move as independent Brownian motions on $\B{R}$, and each particle branches into two particles independently at time dependent rate $r(t)$. Simultaneously with each branching event, we kill a particle, killing the $i$\textsuperscript{th} leftmost particle with probability $\int_{(i-1)/N}^{i/N}\psi(s)ds$, where $\psi$ is a positive, bounded, and continuous function with $\int_0^1 \psi(s)ds=1$. We will call this process the $(\psi,r,N)$-BBM. Note that unlike the $(b,D)$-BBM, this allows for the rank of the deleted particle to be greater than the rank of the branching particle. Then we will show that the empirical cumulative distribution function of the $(\psi,r,N)$-BBM process converges to the unique solution to the following reaction-diffusion PDE:
\begin{equation} \label{PDEinIntro} U_t(x,t)=\frac12U_{xx}(x,t)+r(t)\left(U(x,t)-\int_{1-U(x,t)}^1 \psi(s)ds\right).\end{equation}

The consequences of this result are two-fold. Firstly, any PDE $U_t = \frac12 U_{xx} + r(t)G(U)$, where $G(0)=G(1)=0$ and $G'(x)\leq 1$ for $x\in [0,1]$ can be understood as the hydrodynamic limit of a $(\psi,r,N)$-BBM with branching rate $r$ and selection function $\psi(x)=1-G'(1-x)$. Secondly, we may consider functions $G$ for which the asymptotic behaviour of the equation $U_t = \frac12 U_{xx} + G(U)$ is known, and discover what this implies about the asymptotic behaviour of the corresponding interacting particle system. Examples are given in Section \ref{travellingSection}. 

Another question about the $(\psi,r,N)$-BBM which naturally arises is that of its asymptotic velocity. By coupling the process to versions of the $N$-BBM process, we prove that, when $r(t)\equiv 1$ and under certain conditions on $\psi$, its asymptotic velocity, $v^\psi_N$, matches that of the $N$-BBM up to order $O((\log N)^{-2})$. That is,
$$v^\psi_N = \sqrt{2} - \frac{\pi^2}{\sqrt{2}(\log N)^2} + o\left(\frac{1}{(\log N)^2}\right).$$

We may notice that when $r(t)\equiv 1$, $\psi(1)=0$, and and $\int_0^x \psi(s)ds \leq x$ for all $x\in [0,1]$, then $\lim_{N\to\infty}v^\psi_N=\sqrt{2}=\sqrt{2G'(0)}$ is the minimal travelling wave speed for the PDE (\ref{PDEinIntro}). This is an example of a \textit{weak selection principle}; the $N$ particle system has an asymptotic velocity which `selects' the minimal travelling wave speed of the limiting PDE as $N\to\infty$. See for example \cite{toughBeres}, \cite{hydroNBBM}, \cite{sameSelec}. Thus we aim to the answer the question of when the following diagram holds:
\begin{center}\begin{tikzcd}[column sep=6em, row sep=4em] (\psi,r,N)\text{-BBM Process} \arrow[r, "\text{Hydrodynamic limit}\atop N \to \infty"] \arrow[d, "{\text{Asymptotic velocity}\atop t \to \infty}"']  & \text{Solution to PDE (\ref{PDEinIntro})} \arrow[d, dashed, no head, "\text{Minimal wave speed}"] \\ v_N \arrow[r, "N \to \infty"] & \sqrt{2} \end{tikzcd}\end{center}
A discussion of this appears in Section \ref{travellingSection}.

\section{Construction of the process} \label{constructionSection}

Before stating the main results of this paper, we will formally construct the $(\psi,r,N)$-BBM process as a measure-valued random process $(\mu^N_t)_{t\geq 0}$ constructed from i.i.d Brownian motions, a Poisson process, and discrete random variables. We will write $\mu_t^N:=\frac1N \sum_{i=1}^N \delta_{X_i^N(t)}$ where $X^N(t)=(X_1^N(t),X_2^N(t)\ldots,X_N^N(t))$ is the vector of locations of the $N$ particles at time $t$. Define the function $\Theta^N:\B{R}^N\to\B{R}^N$ to be the function which sorts an $N$ dimensional vector into increasing order. Furthermore, let $\Theta^N_j(\underline{x})$ denote the $j$\textsuperscript{th} element of the vector $\Theta^N(\underline{x})$, which is the $j$\textsuperscript{th }smallest element of the vector $\underline{x}$.

Let the initial condition $\mu^N_0=\rho^N$ where $\rho^N$ is an atomic measure, the sum of $N$ atoms of weight $1/N$, and suppose that $\rho^N$ converges weakly to the probability measure $\rho$ which is absolutely continuous with respect to the Lebesgue measure.

Now let $(\C{N}(t))_{t\geq 0}$ be a Poisson process with time-dependent intensity $Nr(t)$, where $r$ is continuous and bounded. Let $t_1,t_2,\ldots$ be the discontinuities of the process, with $t_0=0$. At each time $t_m$ we will order the elements of $X^N(t)$ so that $X_1^N(t_m)\leq X_2^N(t_m)\leq  \ldots \leq X_N^N(t_m)$, but we may have $X_i(t)>X_j(t)$ for $i<j$ and $t\notin\{t_0,t_1,t_2,\ldots\}$. Let $\C{I}=(I_n)_{n=1,2,\ldots}$ be a sequence of i.i.d. uniform random variables on $\{1,2,\ldots,N\}$, and let $\C{J}=(J_n)_{n=1,2,\ldots}$ be a sequence of i.i.d. random variables with $\B{P}(J_n=k)=\int_{(k-1)/N}^{k/N}\psi(s)ds$ for $k=1,2,\ldots,N$, where $\psi:[0,1]\to[0,\infty)$ is assumed to be positive and continuous with $\int_0^1 \psi(s)ds=1$.

%Now let \JB{$\{(\C{N}_{i,j}(t))_{t\geq 0}, 1\le i, j \le N\}$} be a family of $N^2$ Poisson processes, where $\C{N}_{i,j}$ has \JB{measure-}intensity \JB{$r(t)\left[\int_{(j-1)/N}^{j/N}\psi(r)dr\right]dt$}. Let $t_1,t_2,\ldots$ be the times of discontinuity of a Poisson process \JB{which one?}, and let $(i_1,j_1),(i_2,j_2),\ldots$ be the corresponding indices\JB{too vague, reformulate}. That is, $t_0=0$ and inductively $t_k:=\inf\{t>t_{k-1}: \C{N}_{i,j}(t)\neq \C{N}_{i,j}(t-)\text{ for some }(i,j)\}$, and $(i_k,j_k)$ are the (almost surely unique) indices such that $\C{N}_{i_k,j_k}(t_k)\neq \C{N}_{i_k,j_k}(t_k-)$. 

%\JB{I would find it much clearer if you has a sequence of iid random variables $K_n, n\ge 1$ whos law depends on $\psi$ and which tells you who is killed at the $n$th event, a sequence $ A_n$ of iid uniform r.v. which tells you who is branching and then just a Poisson process of rate $Nr(t)$ }

Let $\C{W}:=((W_i(t))_{t\geq 0})_{i=1,2,\ldots,N}$ be a family of $N$ i.i.d Brownian motions. On each interval $(t_{m-1},t_m)$ for $m=1,2,\ldots$, the particle at location $X_i^N(t_{m-1})=\Theta^N_i(X^N(t_{m-1}))$ is driven by the Brownian motion $(W_i)_{t_{m-1}\leq t\leq t_m}$; so when $t\in (t_{m-1},t_m)$, the $N$ particles of the system are at locations $X^N_i(t_{m-1})+ W_i(t) - W_i(t_{m-1})$ for $i\in [N]$. Then at each time $t_{m}$, we move the particle at location $\Theta^N_{J_m}(X^N(t_m-))$ to the location of $\Theta^N_{I_m}(X^N(t_m-))$. Note that this does not exclude the possibility that $I_m=J_m$. Repeating this inductively, we can describe the process $(X^N(t))_{t\geq 0}$ as a function $X^N(t):=\Xi(\rho^N, \C{N},\C{W},\C{I},\C{J},t)$ for all times $t\geq 0$. Thus we have constructed the measure-valued process $(\mu^N_t)_{t\geq 0}$. %We consider $(\mu^N_t)_{t\in [0,T]}$ as a measure-valued process in the space $\C{D}([0,T],\C{M}_F^w(\B{R}))$, the space of cadlag paths $[0,T]\to \C{M}_F^w(\B{R})$, where $\C{M}_F^w(\B{R})$ is the space of finite measures on $\B{R}$ endowed with the weak topology. 

\subsection{Notation}
\begin{itemize}
    \item $H(x):=\is_{x\geq 0}$ denotes the Heaviside function and $\tilde{H}(x)=\is_{x\leq 0}$ its reflection.
    \item $\C{BC}^{2,1}(\B{R}\times [0,\infty),\B{R})$ is the set of functions $f$ such that $f_x$, $f_{xx}$, and $f_t$ exist, and $f$, $f_x$, $f_{xx}$, and $f_t$ are continuous and bounded, and $\C{BC}^{2,1}_0(\B{R}\times [0,\infty),\B{R})$ is the subset of $\C{BC}^{2,1}(\B{R}\times [0,\infty),\B{R})$ vanishing as the first variable goes to $\pm \infty$. $C_b(\B{R})$ will denote the set of continuous and bounded functions on $\B{R}$ of which the functions $f\in \C{BC}^{2,1}(\B{R}\times [0,\infty),\B{R})$ that are constant in the second variable form a dense subset. $C_c^{2,1}(\B{R}\times [0,\infty),\B{R})$ denotes the set of continuous functions $f:\B{R}\times [0,\infty)\to\B{R}$ with continuous derivatives $f_x$, $f_{xx}$, and $f_t$ and compact support.
    \item For function $f$ and measure $\mu$, $\langle f,\mu\rangle:=\int_{\B{R}} f(x)\mu(dx)$ and for functions $f,g$, $\langle f,g,\rangle :=\int_\B{R} f(x)g(x)dx$.
    \item For martingale $(M_t)_{t\geq 0}$, $([M]_t)_{t\geq 0}$ denotes the quadratic variation process of $M$. 
    \item For set $S$, $\C{P}(S)$ will denote the set of probability measures on $S$. 
    %\item For $N\in \B{N}$, $[N]$ will denote the set $\{1,2,\ldots,N\}$.
    \item $\C{M}_F^w(\B{R})$ will denote the space of finite measures on $\B{R}$ under the weak topology. Moreover, we write $\bar{\B{R}}$ for the extended real line $\B{R}\cup\{+\infty,-\infty\}$ which is the two-point compactification of $\B{R}$, and $\C{M}_F^w(\bar{\B{R}})$ for the corresponding space of measures.
    \item We will write $\mu \ll \lambda$ to denote that the measure $\mu$ is absolutely continuous with respect to the Lebesgue measure $\lambda$. 
\end{itemize}

\section{Main results}

The first result we prove here is the hydrodynamic limit for the $(\psi,r,N)$-BBM process under the conditions that $r$ is continuous and bounded and $\psi$ is positive, and continuous on $[0,1]$, with $\int_0^1 \psi(u)du=1$. We will show, in a sense which we will make clear, that the hydrodynamic limit of the process $(\psi,r,N)$-BBM is the unique classical solution of the PDE:
\begin{align} \label{mainResultPDE} U_t(x,t)= \frac12 U_{xx}(x,t) + r(t)\left(U(x,t) - \int_{1-U(x,t)}^1 \psi(s)ds\right) \end{align}

It is a classical result due to Kolmogorov, Petrovskii, and Piscunov (\S 3, Theorem 1, \cite{kpp}) that if $Y:\B{R}\times [0,\infty)\times \B{R}\to \B{R}$ is continuous, bounded, and Lipschitz continuous in the first and third variable, and $U(x,0)$ is bounded and continuous, then $U_t=U_{xx}+Y(x,t,U)$ has a unique classical solution. Note that if $r$ is continuous and bounded on $[0,\infty)$ and $\psi$ is positive and continuous on $[0,1]$, then $$Y(x,t,U):=r(t)\left(U-\int_{1-U}^1\psi(s)ds\right)\is_{U\in [0,1]}$$ immediately satisfies these conditions. 

Let us now define the assumption \textit{(A1)} on $\psi$ and $r$ under which our first main result holds. 

\textbf{\textit{(A1)}}: $\psi$, and $r$ satisfy assumption \textit{(A1)} if $\psi:[0,1]\to[0,\infty)$ is continuous with $\int_0^1 \psi(s)ds=1$ and $r$ is continuous and bounded.

\vspace{1em}

\begin{theorem} \label{psirNGeneralLimit}
    Let $(\mu^N_t)_{t\geq 0}\in \C{D}([0,T],\C{M}_F^w(\B{R}))$ be the empirical measure-valued process corresponding to the $(\psi,r,N)$-BBM process with $\mu^N_0=\rho^N$. Let $Q^N_T\in \C{P}(\C{D}([0,T],\C{M}^w_F(\B{R})))$ denote the distribution of $(\mu_t^N)_{t\in [0,T]}$. Then under assumption (A1), for any fixed $T\in (0,\infty)$, $Q^N_T$ converges weakly to $Q^\infty_T=\delta_{u}$, where $\delta_{u}$ is a Dirac mass on $u(x,t)\in \C{D}([0,T],\C{M}^w_F(\B{R}))$, and $U(x,t):=\int_x^\infty u(y,t)dy$ is the unique classical solution to \eqref{mainResultPDE} with initial condition $U(x,0)=\int_x^\infty \rho(dy)$.
\end{theorem}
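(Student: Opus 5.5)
We follow the standard three-step route for hydrodynamic limits: tightness of $(Q^N_T)_N$ in $\C{D}([0,T],\C{M}_F^w(\B{R}))$, identification of every subsequential limit as a (weak) solution of a closed equation for $U$, and uniqueness for that equation. The closure is done at the level of the tail function $U^N(x,t):=\mu^N_t([x,\infty))$. The key observation is that the rank-based dynamics become a local function of $U^N$. At a branching event, the particle at position $x$ is the $j$\textsuperscript{th} leftmost with $j/N\approx 1-U^N(x,t)$. So the killing probability of a particle to the right of $x$ is, up to $O(1/N)$, equal to $\int_{1-U^N(x,t)}^1\psi(s)ds$.

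\textbf{Step 1 (semimartingale decomposition).} Take $f\in C^{2,1}_c$ (or $f(x,t)=\phi(x)$ with $\phi$ smooth and compactly supported). Itô's formula together with the compensator of the jumps gives
$$\langle f(\cdot,t),\mu^N_t\rangle=\langle f(\cdot,0),\rho^N\rangle+\int_0^t\langle f_s+\tfrac12 f_{xx},\mu^N_s\rangle ds+\int_0^t r(s)\Big(\langle f,\mu^N_s\rangle-\sum_{k=1}^N f\big(\Theta^N_k(X^N(s))\big)\int_{(k-1)/N}^{k/N}\psi\Big)ds+M^{N,f}_t .$$
The branching term comes from the uniform choice of $I_m$, which replicates a uniformly chosen particle. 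The killing term is the $\psi$-weighted sum over ranks. This sum equals $\int_{\B{R}} f(x)\,\Psi(\mu^N_s((-\infty,x]))\,\mu^N_s(dx)$ up to $O(\|f\|_\infty/N)$, where $\Psi(v)=\int_0^v\psi$ is used as a Stieltjes integral against the empirical CDF. Both the Brownian and the jump contributions to $M^{N,f}$ have quadratic variation $O(\|f\|^2_{C^1}T/N)$, so $M^{N,f}\to0$ in $L^2$ uniformly on $[0,T]$ by Doob's inequality.

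\textbf{Step 2 (tightness).} The drift is bounded by $C(\|f\|_{C^2})$ using boundedness of $r$ and $\psi$, and the martingale vanishes. Aldous' criterion therefore gives tightness of each real process $\langle f,\mu^N\rangle$. For compact containment, we use that particles only jump onto existing positions, so $\mu^N_t$ is dominated in spatial spread by $N$ Brownian motions with jumps. More concretely, we bound $\B{E}\langle |x|,\mu^N_t\rangle\le \langle|x|,\rho^N\rangle + C + CT\,\B{E}\sup_{s\le t}\langle|x|,\mu^N_s\rangle$-type Gronwall estimates. If no moment is assumed, we instead work in $\C{M}^w_F(\bar{\B{R}})$ and then show no mass escapes to $\pm\infty$, using weak convergence of $\rho^N$. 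Jaffard's criterion then gives tightness in $\C{D}([0,T],\C{M}^w_F(\B{R}))$, and limit points are continuous.

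\textbf{Step 3 (identification and uniqueness).} Let $\mu$ be a limit point. First we show $\mu_t\ll\lambda$ for all $t$. We do this by comparison: $\mu^N_t\le e^{\int r}\,\times$ the empirical measure of a BBM (deletions only remove mass), whose expectation is $\rho*$heat kernel times a bounded factor. This bound also makes $v\mapsto\Psi(\mu_t((-\infty,x]))$ continuous at almost every point, so the functional $\mu\mapsto\int f\,\Psi(F_\mu)\,d\mu$ is continuous at $\mu$, because atoms are absent. Passing to the limit, $\mu$ solves the weak equation. Next we integrate against $f(x)=\int_{-\infty}^x\phi$ (a limiting argument with $f\to H(\cdot-x)$ formally). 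Since $\int_{\B{R}}H(y-x)\Psi(F(y))dF(y)=\int_{1-U}^{1}\Psi'(v)dv$ with $U=\mu([x,\infty))$, after an integration by parts we find that $U$ is a weak/mild solution of \eqref{mainResultPDE}:
$$U(t)=P_tU(0)+\int_0^t P_{t-s}\,r(s)\Big(U(s)-\int_{1-U(s)}^1\psi\Big)ds,$$
where $P$ is the heat semigroup with generator $\frac12\partial_{xx}$. The nonlinearity is Lipschitz in $U\in[0,1]$ with constant $\|r\|_\infty(1+\|\psi\|_\infty)$, so a Gronwall argument in sup norm gives uniqueness of bounded mild solutions. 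The mild solution coincides with the classical KPP solution quoted before the theorem. Since every limit point is the same deterministic path, $Q^N_T\Rightarrow\delta_u$.

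The main obstacle is the identification in Step 3, that is, passing to the limit in the rank-dependent nonlinear term. It requires continuity of $\mu\mapsto\int f\,\Psi(F_\mu)d\mu$, which fails at atomic $\mu$. The first thing I would try is the BBM domination bound, to obtain a uniform-in-$N$ bound on $\B{E}\mu^N_t(I)\le C|I|^{1/2}t^{-1/2}$-type estimates for intervals $I$, or simply absolute continuity of the limit via the dominating heat flow.
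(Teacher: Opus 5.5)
Your route is the paper's. It uses the same semimartingale decomposition, Aldous' criterion with the two-point compactification $\bar{\B{R}}$ for compact containment, and domination by a branching Brownian motion to control limit points. It then uses continuity of the rank-dependent term at non-atomic measures, tests against primitives to pass to $U$, and closes with a sup-norm Gronwall uniqueness argument. Your mild formulation is equivalent to the paper's use of test functions solving $f_t+\frac12 f_{xx}+rf=0$. The final identification with the classical KPP solution is also the same.

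The step that fails as you describe it is the one you single out as the main obstacle. A uniform bound $\B{E}\mu^N_t(I)\le C_t|I|$, passed to a limit point as $\B{E}\mu_t(I)\le C_t|I|$, only shows that the intensity measure $\B{E}\mu_t$ is absolutely continuous. It cannot exclude atoms of the random measure $\mu_t$: for $V$ uniform on $[0,1]$, $\delta_V$ satisfies $\B{E}\delta_V(I)\le|I|$ but is a Dirac mass almost surely. Yet the identification needs $\mu_s$ to be non-atomic almost surely for a.e.\ $s$. That is what makes $F^N_s\to F_s$ uniform and the map $\mu\mapsto\int f\,\psi(F_\mu)\,d\mu$ continuous at $\mu_s$.

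The paper closes this gap with the law of large numbers for the dominating BBM (Theorem~\ref{bbmHydroAppendix}). There $\hat\mu^N_t$ converges to the \emph{deterministic} measure $e^{\int_0^t r}(p_t\star\rho)(x)\,dx$. Since $\{(\mu,\nu):\mu\le\nu\}$ is closed under weak (or vague) convergence, the pathwise domination $\mu^N_t\le\hat\mu^N_t$ gives $\mu_t\le e^{\int_0^t r}(p_t\star\rho)(x)\,dx$ almost surely.

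If you want to stay with moment estimates, you need second moments. The $N$ ancestral families are independent, so $\operatorname{Var}(\hat\mu^N_t(I))=O(1/N)$, and hence $\B{E}[\mu_t(I)^2]\le C_t^2|I|^2$. A union bound over a partition of $[-L,L]$ into intervals of length $h$ then bounds the probability of an atom of mass at least $\epsilon$ by $2LC_t^2h/\epsilon^2\to0$.

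Separately, you conflate $\Psi$ and $\psi$. The killing term equals $\int f\,d(\Psi\circ F^N_s)$ exactly, or $\int f\,\psi(F^N_s)\,d\mu^N_s$ up to $\|f\|_\infty$ times the modulus of continuity of $\psi$ at $1/N$. That error is $o(1)$, not $O(1/N)$, for merely continuous $\psi$. The expression $\int f\,\Psi(F^N_s)\,d\mu^N_s$ you wrote is a different quantity, as $f\equiv1$ shows. Correspondingly, the identity you need in Step 3 is $\int_x^\infty\psi(F(y))\,dF(y)=\int_{1-U(x)}^1\psi(v)\,dv$, valid once $F$ is continuous. With these corrections, the argument goes through along the paper's lines.
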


%\JB{You need to say what classical solution means, in particular for the initial condition}
Next we define assumption \textit{(A2)}, which is a sufficient condition on $\psi$ and $r$ for the $(\psi,r,N)$-BBM process to have an asymptotic velocity $v_N$.

\textbf{\textit{(A2)}}: $\psi$ and $r$ satisfy assumption \textit{(A2)} if $\psi(x)=0$ for $x\in [1-p,1]$ for some $0<p<1$ and $\psi(x)>\epsilon$ for $x\in [0,\epsilon]$ for some $\epsilon>0$, and $r\equiv 1$.

By coupling our system to the $N$-BBM process, we can show that $v_N$ agrees with the speed of the $N$-BBM  up to order $(\log N)^{-2}$.

\vspace{1em}

\begin{theorem} \label{asympSpeedTheorem}
    Let $(X^N(t))_{t\geq 0}$ be a $(\psi,r,N)$-BBM with any initial configuration. Then under the assumption \textit{(A2)}, the $(\psi,r,N)$-BBM has asymptotic velocity:
    $$\lim_{t\to\infty}\frac{\Theta^N_1(X^N(t))}{t}=\lim_{t\to\infty}\frac{\Theta^N_N(X^N(t))}{t} = v_N = \sqrt{2}-\frac{\pi^2}{\sqrt{2}(\log N)^2} + o\left(\frac{1}{(\log N)^2}\right).$$ 
\end{theorem}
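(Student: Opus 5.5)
We prove Theorem \ref{asympSpeedTheorem} in three steps. First we show that an asymptotic velocity $v_N$ exists, using subadditivity and regeneration. Then we sandwich the $(\psi,1,N)$-BBM between two $N'$-BBM processes with $N'$ differing from $N$ by at most a constant factor. Finally we invoke the known $N$-BBM asymptotics of Berestycki–Brunet–Derrida/Maillard (\cite{maillard}, \cite{toughBeres}): $v^{N\text{-BBM}}=\sqrt2-\frac{\pi^2}{\sqrt2(\log N)^2}+o((\log N)^{-2})$. This formula is insensitive to replacing $N$ by $cN$, since $(\log cN)^{-2}-(\log N)^{-2}=O((\log N)^{-3})$.

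\textbf{Existence of $v_N$.} The key structural input is (A2): $\psi\equiv 0$ on $[1-p,1]$, so the top $\lfloor pN\rfloor$ particles are never killed. In addition, $\psi>\epsilon$ on $[0,\epsilon]$, so the bottom particles are killed at rate bounded below. We consider the gap process $(X^N_i(t)-\Theta_N^N(X^N(t)))_i$. The argument for it is standard for finite-$N$ selection systems. With positive probability in unit time, all particles are killed and replaced by descendants of the top particle, and the Brownian motions stay in a bounded window. This gives a Doeblin-type minorisation, so the shape process is positive Harris recurrent with regeneration times of finite exponential moments. Between regenerations, the increments of $\Theta^N_N$ are i.i.d. with finite mean, so $\Theta^N_N(X^N(t))/t\to v_N$ a.s. The diameter $\Theta^N_N-\Theta^N_1$ is tight, so the leftmost particle has the same limit. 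The limit is independent of the initial configuration because the regeneration coupling forgets it.

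\textbf{Coupling.} We use the monotone coupling familiar from $N$-BBM (\cite{maillard}): if two configurations are ordered in the sense that, for every $x$, the number of particles above $x$ in the first is at most that in the second, then this order is preserved by common Brownian increments, by branching at matched ranks, and by killing lower ranks in the dominated system.

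For the upper bound, the top $pN$ particles of the $(\psi,1,N)$ system evolve like a system in which nothing is killed at the top. Killings below rank $(1-p)N$ only remove particles, but the top block can receive particles moved from lower ranks only via branching of lower particles. We would compare with an $N$-BBM with branching at the same total rate $N$. Each branching of a top-block particle in our system is matched to a branching in the $N$-BBM, and the $N$-BBM kills its leftmost particle. Since the $N$-BBM keeps $N$ particles and always kills the lowest, its configuration dominates ours. This should give $v_N\le v^{N\text{-BBM}}_{N}$ after rescaling time by the branching rate, which is $1$ per particle here.

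For the lower bound, we consider the top $\lfloor pN\rfloor$ particles. They branch at rate $1$ each, and a branching of a particle above rank $(1-p)N$ always creates a particle in the top block. Meanwhile, some particle leaves the top block only when the block has more than $pN$ members, and the one pushed out is its lowest. Hence the top block stochastically dominates a $\lfloor pN\rfloor$-BBM, giving $v_N\ge v^{\lfloor pN\rfloor\text{-BBM}}$. Combining the two bounds with the $N$-BBM asymptotics proves the theorem.

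\textbf{Main obstacle.} The main obstacle is making the comparisons rigorous, in particular the upper bound. Our system may kill a particle of middle rank while the $N$-BBM kills the lowest, and the ranks of branching particles must be matched in a way that preserves the order. We would try the rank-matched coupling: use the same $I_m$ in both systems and the same Brownian motions assigned by rank. We would then verify order preservation event by event, using that killing a lower-ranked particle in the larger system only helps. If the upper bound fails with the same $N$, we would instead allow a comparison $N$-BBM with $CN$ particles, where $C$ depends on $\epsilon$. Because the asymptotic formula is insensitive to constant factors in $N$, this weaker comparison still suffices.
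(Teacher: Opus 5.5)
Your proposal is correct and is essentially the paper's proof. The rank-matched coupling you settle on in your last paragraph (same $I_m$, same Brownian motions by rank, the $N$-BBM deleting its leftmost particle) is exactly the paper's upper bound; its order preservation is the one-line observation that deleting a lower-ranked entry from the dominating vector while duplicating the same rank in both keeps the order, so no $CN$ fallback is needed (your earlier phrase ``killing lower ranks in the dominated system'' has the direction reversed). The lower bound through the never-killed top $\lfloor pN\rfloor$ block, and the regeneration event in which all particles are replaced by descendants of the top one, are also the paper's.

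Using that regeneration structure to prove the limit exists improves on the paper, which only compares $\Theta^N_1$ and $\Theta^N_N$ along regeneration times. To transfer the limit to $\Theta^N_1$ almost surely, however, you need $\Theta^N_N-\Theta^N_1=o(t)$ a.s., not just tightness of the diameter; this follows from i.i.d.\ cycle maxima with finite mean and Borel--Cantelli.
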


We may reasonably ask what happens if assumption \textit{\textbf{(A2)}} fails. If the assumption that $\psi(x)>0$ for $x\in [0,\epsilon]$ is broken, then we never kill the leftmost particles of the $(\psi,r,N)$-BBM, and accordingly, we would expect $\Theta_1^N(X^N(t))\to -\infty$ as $t\to\infty$ so that no positive asymptotic speed exists. One the other hand, if $\psi(x)>0$ for $x\in [1-p,1]$, then the rightmost $\lfloor pN\rfloor$ particles are killed at strictly positive rate, and we would expect this to slow down the rightmost particle so that $\lim_{N\to\infty}v_N<\sqrt{2}$. We do in fact expect that if $\psi(x)\to 0$ as $x\to 1$ sufficiently quickly, then there is still an asymptotic velocity close to $\sqrt{2}$ (see Conjecture \ref{speedConjecture}), however the specific method of our proof does not apply in this setup. 

In order to form a \textit{weak selection principle}, we also wish to relate $\sqrt{2}=\lim_{N\to\infty}v_N$ to the travelling wave speed or spreading speed of the PDE \eqref{mainResultPDE}. This is made precise in Section \ref{travellingSection}, and summarised in the following Theorem.

\vspace{1em}

\begin{theorem} \label{partialWeakSelecTheorem}
    Under the assumptions \textit{(A1), (A2)}, the minimal spreading speed (in a sense which we will make precise) of the PDE (\ref{mainResultPDE}) is $\sqrt{2}$. If, further, we have that $\int_{1-x}^1 \psi(s)ds \leq x$ for all $x\in [0,1]$, then the PDE (\ref{mainResultPDE}) has a travelling wave with speed $c$ for any $c\geq \sqrt{2}$. 
\end{theorem}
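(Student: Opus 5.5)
Set $G(U):=U-\int_{1-U}^1\psi(s)\,ds$, so that with $r\equiv 1$ equation \eqref{mainResultPDE} reads $U_t=\frac12U_{xx}+G(U)$. Under (A1), $G$ is $C^1$ on $[0,1]$ with $G'(U)=1-\psi(1-U)$. Also $G(0)=0$ and $G(1)=1-\int_0^1\psi=0$. By (A2), $\psi\equiv0$ on $[1-p,1]$, so $G(U)=U$ for $U\in[0,p]$ and $G'(0)=1$. The plan rests on these two facts: the nonlinearity is \emph{exactly} linear near the unstable state $0$, and $G'\le 1$ everywhere since $\psi\ge0$. The interpretation of "minimal spreading speed" I will make precise is the following. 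For compactly supported (or Heaviside-type) initial data $0\le U(\cdot,0)\le 1$ that is not too small, one shows $\limsup_{t\to\infty}\sup_{x\ge ct}U(x,t)=0$ for all $c>\sqrt2$. The companion lower statement is that $U$ stays bounded away from $0$ on $\{x\le ct\}$ for every $c<\sqrt2$.

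\textbf{Upper bound.} Since $G(U)\le U$ on $[0,1]$, the solution is a subsolution of the linear equation $V_t=\frac12V_{xx}+V$. By the comparison principle for classical solutions (available from the uniqueness/regularity result of \cite{kpp} quoted above), $U\le V$, where $V=e^{t}(\text{heat semigroup})\,U(\cdot,0)$. For compactly supported initial data, a Gaussian estimate gives $V(x,t)\le Ce^{t-x^2/(2t)}$, which tends to $0$ uniformly on $x\ge ct$ whenever $c>\sqrt2$. This step is routine.

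\textbf{Lower bound.} Fix $c<\sqrt2$. The goal is a compactly supported subsolution moving at speed $c$. Because $G(U)=U$ exactly for $U\le p$, one can take, in the moving frame $\xi=x-ct$, the function
\[
w(\xi)=\delta\, e^{-c\xi}\cos(\lambda \xi)\quad\text{on }|\xi|\le \tfrac{\pi}{2\lambda},\qquad w=0 \text{ otherwise}.
\]
Here $\lambda>0$ solves $\frac12(c^2-\lambda^2)\cdot\ldots$; more precisely, $\lambda$ is chosen so that the linearised operator $\frac12w''+cw'+w$ vanishes, i.e.\ $\lambda^2=2-c^2>0$. Taking $\delta$ small enough that $w\le p$, $w$ is an exact solution of the nonlinear equation where it is positive. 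Since $w$ vanishes at the endpoints, its maximum with $0$ is a generalized subsolution. Comparison then gives $U(x,t)\ge w(x-ct)$ once $U(\cdot,t_0)\ge w$ at some time $t_0$. This propagates positive mass at every speed $c<\sqrt2$, so the spreading speed is at least $\sqrt2$ and hence exactly $\sqrt2$. (For mass near $\{U\approx 1\}$ the statement is about the front, not the limiting value, so positivity suffices here. If full invasion to the stable state is required, one would additionally use the lower bound on $\psi$ near $0$ from (A2): it makes $G'(1)=1-\psi(0)<0$, so $1$ is stable, and a standard sliding argument combined with the subsolution finishes the argument.)

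\textbf{Travelling waves under the additional hypothesis.} The extra assumption $\int_{1-x}^1\psi\le x$ says precisely that $G\ge0$ on $[0,1]$. Together with $G(0)=G(1)=0$, $G$ Lipschitz and $G'(0)=1$, this puts us in the monostable (KPP-type, though not necessarily KPP-concave) class. The classical phase-plane argument then applies. For $U(x,t)=\phi(x-ct)$ the wave satisfies $\frac12\phi''+c\phi'+G(\phi)=0$, with $\phi(-\infty)=1$ and $\phi(+\infty)=0$. One shoots along the unstable manifold of the saddle $(1,0)$, which is a saddle when $G'(1)<0$; in general one uses the monostable theory of Aronson--Weinberger. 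A trapping-triangle argument shows the trajectory reaches $(0,0)$ monotonically whenever $c\ge c^*$. The minimal speed satisfies $c^*\ge\sqrt{2G'(0)}=\sqrt2$ by linearisation at $0$, since the eigenvalues must be real. The obstacle is the reverse inequality $c^*\le\sqrt2$, because $G$ need not satisfy $G(U)\le G'(0)U$ in the standard form. Here it does hold, since $G(U)\le U$. With that bound, the triangle region $\{0\le \phi'\,\ge -\mu\phi\}$, for $\mu$ a root of $\frac12\mu^2-c\mu+1=0$ (real iff $c\ge\sqrt2$), is invariant. This yields a wave for every $c\ge\sqrt2$. I expect this invariance check, i.e.\ verifying the sign of the vector field on the slanted edge using $G(U)\le U$, to be the main technical step.
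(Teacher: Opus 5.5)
Your upper bound is correct: since $G(u)\le u$, $U$ lies below $e^{t}$ times the heat flow of $U_0$. It is more elementary and stronger than the paper's. The paper only exhibits one initial datum, the minimal-speed wave of an auxiliary KPP nonlinearity $G^+\ge G$, for which $\sup_{x\ge ct}U\to0$ when $c>\sqrt2$; you get this for every datum vanishing on a right half-line.

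The problem is the lower half. The sense the paper makes precise is that every front-like datum spreads \emph{up to} $u^\star$, the first positive zero of $G$. That is, $\lim_{t\to\infty}\inf_{x\le ct}U(x,t)\ge u^\star$ for all $c<\sqrt2$. Your cosine subsolution is fine and uses $G(u)=u$ on $[0,p]$ nicely, but it only gives $U\ge\delta$ on a window moving at speed $c$. Even extending this to all of $\{x\le ct\}$ needs $U(\cdot,t)$ to be non-increasing; this holds for $U_0(x)=\int_x^\infty\rho$, and you should say so.

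Your suggested upgrade rests on a false claim. (A2) only gives $\psi(0)>\epsilon$ for some small $\epsilon$, so $G'(1)=1-\psi(0)$ can be positive. This is exactly the case the paper discusses ($\psi\le\delta<1$ near $0$). Then $G<0$ just below $1$, the state $1$ is unstable, and the solution does not invade to $1$ at all, so no sliding argument can produce that.

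The correct target is $u^\star$, using $G>0$ on $(0,u^\star)$. One option is the Aronson--Weinberger hair-trigger argument with compactly supported subsolutions valued in $[0,u^\star)$ whose maxima approach $u^\star$. The other, which is what the paper does, is to compare $U$ with the solution started from $\min\{U_0,u^\star\}$ and apply the Hamel--Nadin spreading bound for the KPP problem on $[0,u^\star]$.

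For the travelling waves, your phase-plane argument is the proof behind the classical KPP result that the paper simply cites. The slanted-edge check is right: for $\phi'=v$, $v'=-2cv-2G(\phi)$ and $\mu^2-2c\mu+2=0$, on $v=-\mu\phi$ one has $\frac{d}{d\xi}(v+\mu\phi)=2\phi-2G(\phi)\ge0$. But trapping only shows the orbit converges to \emph{some} equilibrium in the triangle. Under the non-strict hypothesis $\int_{1-x}^1\psi\le x$ you only know $G\ge0$.

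Suppose $G$ touches zero at some $u^\star\in(0,1)$; (A1)--(A2) allow this, with $\psi(1-u^\star)=1$. Then $(u^\star,0)$ lies on the top edge of your triangle, and the orbit from $(1,0)$ can stop there. In fact, take $u^\star$ to be the largest such zero. The orbit leaving $(1,0)$ into $\{\phi<1\}$ is unique, since $(1,0)$ is a saddle or a saddle-node. So whenever the monostable front from $1$ to $u^\star$ exists, which is for all $c$ above its minimal speed, it \emph{is} that orbit, and there is no front from $1$ to $0$ at those speeds. You need $G>0$ on $(0,1)$, i.e.\ the strict inequality on $(0,1)$. That is what the paper's supporting proposition actually assumes.
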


Together, Theorems \ref{asympSpeedTheorem} and \ref{partialWeakSelecTheorem} give a partial weak selection principle.

\section{Hydrodynamic limit}

In this section, we will prove the hydrodynamic limit result of Theorem \ref{psirNGeneralLimit}. The following gives an outline of the proof strategy, which was inspired by Demircigil and Tomasevic \cite{demircigilTomasevic}.
\begin{enumerate}
    \item Prove that the sequence of measure-valued processes is tight, so that the sequence has subsequential limits (Proposition \ref{tightness}).
    \item Show that if $\mu^\infty_t$ is a subsequential limit, then $\mu^\infty_t([x,\infty))$ is a weak solution of the PDE \eqref{mainResultPDE} (Propositions \ref{weakRepPropn}, \ref{muinfLimit}, \ref{cdfPdfEquiv}). 
    \item Show that the PDE has a unique weak solution (Proposition \ref{Uniqueness}).
\end{enumerate} 

First we will give a probabilistic representation of $\mu^N$ tested against a test functions in $\C{BC}^{2,1}$. We allow the test functions to depend on time as this will allow us to easily prove uniqueness of weak solutions in step 3 (Proposition \ref{Uniqueness}).

\vspace{1em}

\begin{propn} \label{weakRepPropn}
    Let $(\mu_t^N)_{t\geq 0}$ be the measure-valued Markov process corresponding to the $(\psi,r,N)$-BBM process. Let $f(x,t)\in \C{BC}^{2,1}(\B{R}\times [0,\infty),\B{R})$. Then $\mu^N$, and $f$ satisfy the following equation:
    \begin{align}\label{weakRep}\langle \mu_t^N,f(\cdot,t)\rangle = \langle \mu_0^N, f(\cdot,0)\rangle &+ \int_0^t \langle \mu_s^N, \frac12 f_{xx}(\cdot,s) + f_t(\cdot,s) + r(s)f(\cdot,s)\rangle \\
    \nonumber&-\left\langle \mu_s^N, Nf(\cdot,s)r(s)\int_{(H\star \mu^N_s)(\cdot)-1/N}^{(H\star \mu^N_s)(\cdot)}\psi(u)du\right\rangle ds + M_t^{N,W} + M_t^{N,P},\end{align}
    where $M^{N,W}_t$ is a continuous local martingale with $\B{E}[[M^{N,W}]_t]\xrightarrow[N\to\infty]{}0$ for fixed $t$, and $M^{N,P}_t$ is a local martingale with $\B{E}[[M^{N,P}]_t]\xrightarrow[N\to\infty]{}0$ for fixed $t$.
\end{propn}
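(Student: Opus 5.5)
The plan is to decompose the evolution of $\langle \mu^N_t, f(\cdot,t)\rangle$ into two parts: the continuous Brownian motion between the Poisson times $t_1,t_2,\dots$, and the jumps at those times. On each interval $(t_{m-1},t_m)$ every particle follows $X^N_i(t_{m-1})+W_i(t)-W_i(t_{m-1})$. Applying Itô's formula to $f(X_i^N(t),t)$ and averaging over $i$ gives
$$\frac1N\sum_{i}\Big(f_t+\tfrac12 f_{xx}\Big)(X^N_i(s),s)\,ds+\frac1N\sum_i f_x(X_i^N(s),s)\,dW_i(s).$$
Summing over intervals, the first term is $\int_0^t\langle\mu^N_s,\tfrac12 f_{xx}+f_t\rangle ds$. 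The second term defines
$$M^{N,W}_t:=\frac1N\sum_{i=1}^N\int_0^t f_x(X^N_i(s),s)\,dW_i(s).$$
This is a continuous local martingale (in fact a true martingale, since $f_x$ is bounded), with
$$[M^{N,W}]_t=\frac1{N^2}\sum_i\int_0^t f_x^2\,ds\le \frac{t\|f_x\|_\infty^2}{N}\to0 .$$

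At a jump time $t_m$, the particle of rank $J_m$ is moved onto the particle of rank $I_m$, so
$$\Delta\langle\mu^N,f\rangle(t_m)=\frac1N\Big(f\big(\Theta^N_{I_m}(X^N(t_m-)),t_m\big)-f\big(\Theta^N_{J_m}(X^N(t_m-)),t_m\big)\Big).$$
The jumps occur at rate $Nr(s)$, and $I_m,J_m$ are independent of the past. The compensator of the sum of jumps is therefore
$$\int_0^t Nr(s)\,\frac1N\Big(\frac1N\sum_{i=1}^N f(\Theta^N_i(X^N(s)),s)-\sum_{j=1}^N\int_{(j-1)/N}^{j/N}\psi(u)\,du\; f(\Theta^N_j(X^N(s)),s)\Big)ds .$$
The first part is $\int_0^t r(s)\langle\mu^N_s,f(\cdot,s)\rangle ds$.

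For the second part, I will identify ranks with the empirical distribution function. We have $(H\star\mu^N_s)(x)=\mu^N_s((-\infty,x])$, and at Lebesgue-a.e. time $s$ the particle positions are almost surely distinct. This holds because coincident particles created by a jump separate instantly under independent Brownian motions, and the Poisson times are independent of $\mathcal W$. Hence, for the particle of rank $j$, $(H\star\mu^N_s)(\Theta^N_j)=j/N$. Then
$$\sum_j\int_{(j-1)/N}^{j/N}\psi\; f(\Theta^N_j,s)=\Big\langle\mu^N_s,\;Nf(\cdot,s)\int_{(H\star\mu^N_s)(\cdot)-1/N}^{(H\star\mu^N_s)(\cdot)}\psi(u)\,du\Big\rangle,$$
which gives exactly the selection term in \eqref{weakRep}. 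I define $M^{N,P}_t$ as the sum of the jumps minus this compensator. It is a purely discontinuous local martingale, because the integrand is a bounded predictable functional of $\mu^N_{s-}$. Its quadratic variation is the sum of squared jumps, each bounded by $4\|f\|_\infty^2/N^2$, so
$$\mathbb E[M^{N,P}]_t\le \frac{4\|f\|_\infty^2}{N^2}\,\mathbb E[\mathcal N(t)]\le\frac{4\|f\|_\infty^2\|r\|_\infty t}{N}\to0 .$$

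The main obstacle is the bookkeeping at the jumps rather than any estimate. First, one has to check that the compensator is built from left limits at the jump times, using the ranks of $X^N(t_m-)$. Second, one must confirm that the pre-jump configuration has no ties, so that the rank-to-CDF identity with the right-continuous $H$ holds exactly. I would handle this by noting that ties only matter on a null set of times, which does not affect the $ds$-integral. For the predictability of the compensator, I would argue via the strong Markov property at each $t_{m-1}$ together with the independence of $(\mathcal N,\mathcal I,\mathcal J,\mathcal W)$.
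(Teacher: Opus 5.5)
Your proposal is correct and follows essentially the same route as the paper: It\^{o}'s formula between the Poisson times, compensation of the jumps $\frac1N\big(f(\Theta^N_{I_m})-f(\Theta^N_{J_m})\big)$ using the rate $Nr(s)$ and the independence of the marks $(I_m,J_m)$, the identity $N(H\star\mu^N_s)(\Theta^N_j(X^N(s)))=j$ to rewrite the selection term, and the same bounds $t\|f_x\|_\infty^2/N$ and $4\|f\|_\infty^2\|r\|_\infty t/N$ on the expected quadratic variations. Your bookkeeping is slightly cleaner than the paper's in two places. You attach the $\psi$-weight to the killed index $J$, whereas the paper's intermediate display attaches it to the branching index, so its two pieces come out with signs opposite to those in the final identity. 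You also note that the rank identity is only needed for Lebesgue-a.e.\ $s$, whereas the paper asserts it for all $s$ even though ties occur at jump times.
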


\begin{proof} Note that for an atomic measure such as $\mu^N$ we have $\langle \mu^N_t, f(\cdot,t)\rangle = \frac1N \sum_{i=1}^N f(X^N_i(t),t)$. Then by Ito's formula, for $t\in [0,t_1)$:
$$f(X^N_i(t),t)=f(X^N_i(0),0)+\int_0^t f_x(X^N_i(s),s)dW_i(s)+\int_0^t f_t(X^N_i(s),s)+\frac12 f_{xx}(X^N_i(s),s)ds,$$
so for $t\in [0,t_1)$
$$\langle \mu_t^N, f(\cdot,t)\rangle = \langle \mu_0^N, f(\cdot,0)\rangle + \int_0^t \langle \mu_s^N, f_t(\cdot,s) + \frac12 f_{xx}(\cdot,s)\rangle ds + \frac1N \sum_{i=1}^N \int_0^t f_x(X^N_i(s),s)dW_i(s).$$

Then at the time $t_1$, the discontinuity in $\C{N}$ induces a change in $\langle \mu_t^N, f(\cdot,t)\rangle$ (unless $I_1=J_1$). Specifically, $\langle\mu_{t_1-}^N,f(\cdot,t_1-)\rangle$ jumps  by $\frac1N(f(\Theta^N_{I_1}(X^N(t_1-)),t_1)-f(\Theta^N_{J_1}(X^N(t_1-)),t_1))$. Thus
\begin{align*}\langle \mu_{t_1}^N, f(\cdot,t_1)\rangle = \langle \mu_0^N, f(\cdot,0)\rangle &+ \frac1N \sum_{i=1}^N \int_0^{t_1} f_x(X^N_i(s),s)dW_i(s) +\int_0^{t_1} \langle \mu_s^N, f_t(\cdot,s) + \frac12 f_{xx}(\cdot,s)\rangle ds \\
&+ \frac1N \Big(f(\Theta^N_{I_1}(X^N(t_1-)),t_1)-f(\Theta^N_{J_1}(X^N(t_1-)),t_1)\Big),\end{align*}
and therefore, for general $t$, integrating with respect to $\C{N}(t)$, we can write
\begin{align}\begin{split}\label{randomIntegralRep}\langle \mu_{t}^N, f(\cdot,t)\rangle = \langle \mu_0^N, f(\cdot,0)\rangle &+ M^{N,W}_t + \int_0^t \langle \mu_s^N, f_t(\cdot,s) + \frac12 f_{xx}(\cdot,s)\rangle ds \\
&+\frac1N \int_0^t f(\Theta_{I_{\C{N}(s)}}^N(X^N(s-)),s)-f(\Theta_{J_{\C{N}(s)}}^N(X^N(s-)),s) \C{N}(ds),\end{split}\end{align}
where $M^{N,W}_t$ is the continuous local martingale $M^{N,W}_t:=N^{-1}\sum_{i=1}^N \int_0^t f_x(X^N_i(s),s)dW_i(s)$. Now note the process $\C{N}(t)-N\int_0^t r(s)ds$ is a compensated Poisson process and thus a martingale %\JB{remove: (see for example the Example at the end of I.7, \cite{protter})}.
Then we wish to show that the process 
\begin{align*}M^{N,P}_t:=\frac1N \int_0^t f(&\Theta^N_{I_{\C{N}(s)}}(X^N(s-)),s)-f(\Theta^N_{J_{\C{N}(s)}}(X^N(s-)),s)\C{N}(ds)\\
&-\frac1N \sum_{i,j\in [N]^2}\int_0^t \big(f(\Theta^N_i(X^N(s-)),s)-f(\Theta^N_j(X^N(s-)),s)\big)\int_{(i-1)/N}^{i/N}\psi(u)du\,r(s)ds\end{align*}
is a local martingale with respect to the filtration $\C{F}_s:=\sigma(W_i(u),\C{N}(u),I_m,J_m:i\in [N],u\leq s, m\leq \C{N}(s))$. This follows from that fact that the random variables $I_{\C{N}(s)}$ and $J_{\C{N}(s)}$ are independent of $\C{N}$, so that:
\begin{align*}
    \B{E}\Bigg[\int_s^t f(\Theta^N_{I_{\C{N}(u)}}&(X^N(u-)),u)-f(\Theta^N_{J_{\C{N}(u)}}(X^N(u-)),u)\C{N}(du) | \C{F}_s\Bigg]\\
    &=\B{E}\left[\int_s^t f(\Theta^N_{I_{\C{N}(u)}}(X^N(u-)),u)-f(\Theta^N_{J_{\C{N}(u)}}(X^N(u-)),u)\C{N}(du)\right] \\
    &=\B{E}\left[\sum_{i,j\in [N]^2}\int_s^t \big(f(\Theta^N_i(X^N(u-)),u)-f(\Theta^N_j(X^N(u-)),u)\big)\B{P}(I_{\C{N}(u)}=i,J_{\C{N}(u)}=j)\C{N}(du)\right] \\
    &= \B{E}\left[\sum_{i,j\in [N]^2}\int_s^t \big(f(\Theta^N_i(X^N(u-)),u)-f(\Theta^N_j(X^N(u-)),u)\big)\times \frac1N\int_{(i-1)/N}^{i/N}\psi(v)dv \times Nr(u)du\right]%\\
    %&= \sum_{i,j\in [N]^2}\int_s^t \big(f(\Theta^N_i(X^N(u-)),u)-f(\Theta^N_j(X^N(u-)),u)\big)\int_{(i-1)/N}^{i/N}\psi(v)dv r(u)du,
\end{align*}
where the final equality follows from the fact that $C(t):=\C{N}(t)-\int_0^t Nr(s)ds$ is a martingale and hence $\int_0^t \big(f(\Theta^N_i(X^N(s-)),s)-f(\Theta^N_j(X^N(s-)),s)\big)\frac1N \int_{(i-1)/N}^{i/N}\psi(u)du C(ds) $ is a martingale. From this it follows that $\B{E}[M^{N,P}_t-M^{N,P}_s|\C{F}_s]=0$, thus confirming that $M^{N,P}_t$ is a local martingale. So equation (\ref{randomIntegralRep}) can be written as 
\begin{align*}\langle \mu_t^N, f(\cdot,t)\rangle = &\langle \mu_0^N,f(\cdot,0) \rangle + M_t^{N,W} + M_t^{N,P} + \int_0^t \langle \mu_s^N, f_t(\cdot,s) + \frac12 f_{xx}(\cdot,s)\rangle ds \\
&+ \frac1N \sum_{(i,j)\in [N]^2}\int_0^t \big(f(\Theta^N_i(X^N(s-)),s)-f(\Theta^N_j(X^N(s-)),s)\big)\int_{(i-1)/N}^{i/N}\psi(u)du\,r(s)\, ds\end{align*}

We will now deal with the final term of the above equation - the sum over $i,j\in [N]^2$ - in two parts. Using the observation that $N(H\star \mu^N_s)(\Theta^N_i(X^N(s)))=N\int_{-\infty}^{\Theta^N_i(X^N(s))}d\mu^N_s=i$ for all $s$, and summing first over $j\in [N]$, we can calculate that
    \begin{align*}\frac1N \sum_{(i,j)\in [N]^2}\int_0^t  f(\Theta^N_i(X^N(s&-)),s)r(s)\int_{(i-1)/N}^{i/N}\psi(u)du\,ds \\
    &= \sum_{i\in [N]} \int_0^t f(\Theta^N_i(X^N(s-)),s)r(s)\int_{(H\star \mu_s^N)(\Theta^N_i(X^N(s)))-1/N}^{(H\star \mu_s^N)(\Theta^N_i(X^N(s)))}\psi(u)du \,ds \\
    &=\int_0^t \left\langle \mu_s^N, Nf(\cdot,s)r(s)\int_{(H\star \mu^N_s)(\cdot)-1/N}^{(H\star \mu^N_s)(\cdot)}\psi(u)du\right\rangle, \end{align*}
%where the factor of $N$ comes from summing over $j\in [N]$. 
Then observing that $\sum_{i\in [N]}\int_{(i-1)/N}^{i/N}\psi(u)du = \int_0^1 \psi(u)du = 1$, and summing first over $i\in [N]$, we have
\begin{align*}\frac1N \sum_{(i,j)\in [N]^2}\int_0^t &f(\Theta^N_j(X^N(s-)),s)r(s)\int_{(i-1)/N}^{i/N}\psi(u)du\, ds\\
&=\int_0^t \frac1N \sum_{j\in [N]}f(\Theta^N_j(X^N(s-)),s)r(s)ds = \int_0^t \langle \mu_s^N, f(\cdot,s)r(s)\rangle ds.\end{align*}
Putting this together yields equation (\ref{weakRep}), as required. Note that by Ito's isometry and the independence of the $W_i$'s, $M_t^{N,W}$ has expected quadratic variation 
\begin{align} \label{qvW} \B{E}[[M^{N,W}]_t]=\B{E}\left[\left[ \frac1N \sum_{i=1}^N \int_0^t f_x(X^N_i(s),s)dW_i(s)\right]_t\right] \leq \frac{1}{N^2}\sum_{i=1}^N \int_0^t \|f_x\|_\infty^2 ds= \frac{\|f_x\|_\infty^2 t}{N}. \end{align}
Since $f_x$ is bounded, thus $\B{E}[[M^{N,W}]_t]\to 0$ as $N\to\infty$ for any fixed $t$. Furthermore, by Ito's isometry, the expected quadratic variation of $M^{N,P}_t$ can be calculated:
\begin{align} \label{qvP}
    \nonumber \B{E}[[M^{N,P}]_t] &= \B{E}\left[\left[\frac1N\int_0^t \big(f(\Theta^N_{I_{\C{N}(s)}}(X^N(s-)),s)-f(\Theta^N_{J_{\C{N}(s)}}(X^N(s-)),s)\big)\C{N}(ds)\right]_t\right]\\
    \nonumber &=\B{E}\left[\int_0^t \frac{1}{N^2}\big(f(\Theta^N_{I_{\C{N}(s)}}(X^N(s-)),s)-f(\Theta^N_{J_{\C{N}(s)}}(X^N(s-)),s)\big)^2 \C{N}(ds)\right] \\
    &\leq \B{E}\left[\int_0^t \frac{4\|f\|_\infty^2}{N^2}\C{N}(ds)\right] = \frac4N \|f\|_\infty^2 \int_0^t r(s)ds
\end{align}
Since $f$ and $r$ is bounded, thus $\B{E}[[M^{N,P}]_t]\xrightarrow[N\to\infty]{}0$ for fixed $t$. This concludes the proof. 
\end{proof}

\vspace{1em}

%Since we are seeking the hydrodynamic limit of the $(\psi,r,N)$-BBM system, we wish to take the limit as $N\to\infty$ of the sequence $((\mu^N_t)_{t\geq 0})_{N=1,2,\ldots}$ of measure-valued processes. Therefore in order to prove that such limits exist, we prove tightness of the sequence in the appropriate space. 
Next, we prove tightness of the sequence of laws of $(\mu_t^N)_{t\geq 0}$. Specifically, we will prove that the sequence of laws is tight in the space $\C{P}(\C{D}([0,T],\C{M}^w_F(\bar{\B{R}}))$, where $\bar{\B{R}}$ is the extended real line $\B{R}\cup\{-\infty,+\infty\}$. The benefit of this method is that $\bar{\B{R}}$ is compact, and as a consequence $\{\mu:\mu(\bar{\B{R}})=1\}$ is a compact subset of $\C{M}^w_F(\bar{\B{R}})$. This will help us to prove a compactification condition required to show tightness. 

\vspace{1em}

\begin{propn} \label{tightness}
    Fix $T>0$, and let $Q^N_T$ denote the law of $(\mu_t^N)_{t\in [0,T]}$. Then the sequence of $(Q^N_T)_{N=1,2,\ldots}$ is tight in $\C{P}(\C{D}([0,T],\C{M}_F^w(\bar{\B{R}})))$ with respect to the Skorokhod metric.
\end{propn}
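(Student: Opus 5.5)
The natural tool is Jakubowski's criterion for tightness of measure-valued càdlàg processes, in the form used by Demircigil and Tomasevic. It has two ingredients.

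\textbf{Compact containment.} For every $\eta>0$ there should be a compact $K\subset \C{M}_F^w(\bar{\B{R}})$ with $\inf_N \B{P}(\mu^N_t\in K \ \forall t\in[0,T])\geq 1-\eta$. This step is automatic: $\mu^N_t(\bar{\B{R}})=1$ for all $t$ and $N$, and $\{\mu:\mu(\bar{\B{R}})=1\}$ is weakly compact because $\bar{\B{R}}$ is compact (Prokhorov). So we may take $K$ to be this set.

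\textbf{Tightness of real projections.} Fix a family $\C{F}\subset C(\bar{\B{R}})$ that separates points of $\C{M}_F^w(\bar{\B{R}})$ and is closed under addition. A natural choice is the functions $f$ that are constant near $\pm\infty$ with $f$ of class $C^2$ (time-independent elements of $\C{BC}^{2,1}$ extended continuously). The set of these extended $f$ is dense in $C(\bar{\B{R}})$, so it separates measures. For each such $f$ we must show that the real processes $(\langle\mu^N_t,f\rangle)_{t\in[0,T]}$ are tight in $\C{D}([0,T],\B{R})$.

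We do this with Aldous' criterion, using the decomposition in Proposition \ref{weakRepPropn}.

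First, $|\langle\mu^N_t,f\rangle|\leq\|f\|_\infty$, so the one-dimensional marginals are tight.

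Second, we need the Aldous condition. For stopping times $\tau_N\leq T$ and $\delta_N\to0$, we show $\langle\mu^N_{\tau_N+\delta_N},f\rangle-\langle\mu^N_{\tau_N},f\rangle\to0$ in probability. Write the increment as three pieces.

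\begin{enumerate}[(i)]
\item The drift increment $\int_{\tau_N}^{\tau_N+\delta_N}(\cdots)ds$. The first part of the integrand is bounded by $\tfrac12\|f_{xx}\|_\infty+\|r\|_\infty\|f\|_\infty$. For the selection term, note that on atoms $N\int_{(i-1)/N}^{i/N}\psi\leq\|\psi\|_\infty$, so that term contributes at most $\|r\|_\infty\|f\|_\infty\|\psi\|_\infty$. The whole drift is therefore $O(\delta_N)$ deterministically.
\item The martingale increment of $M^{N,W}$. By optional stopping and \eqref{qvW}, its second moment is at most $\|f_x\|^2_\infty\delta_N/N$.
\item The martingale increment of $M^{N,P}$. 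By the same computation as in \eqref{qvP}, its second moment is at most $4\|f\|_\infty^2\|r\|_\infty\delta_N/N$.
\end{enumerate}

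Chebyshev's inequality then gives convergence to $0$ in probability.

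The localisation issue (the martingales are only local martingales) disappears because $f$, $f_x$ and $r$ are bounded. The quadratic variations then have finite expectation, so $M^{N,W}$ and $M^{N,P}$ are true square-integrable martingales and optional stopping applies.

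The main point needing care is compatibility with $\bar{\B{R}}$. The test functions must extend continuously to $\pm\infty$, and the dynamics must make sense there. Since the particles live in $\B{R}$, no mass actually sits at $\pm\infty$. The only check required is that Proposition \ref{weakRepPropn} applies to $f\in\C{BC}^{2,1}$ that are constant near $\pm\infty$, which it does.

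A secondary technical point is justifying that this class $\C{F}$ satisfies Jakubowski's hypotheses: it must consist of continuous functions, separate points, and be closed under addition. Both the $C^2$-with-constant-tails functions and their sums belong to $\C{F}$, and density in $C(\bar{\B{R}})$ gives separation.

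With both ingredients in hand, Jakubowski's theorem yields tightness of $(Q^N_T)$ in $\C{P}(\C{D}([0,T],\C{M}_F^w(\bar{\B{R}})))$.
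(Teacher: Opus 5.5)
Your proposal is correct and follows essentially the same route as the paper. Compact containment comes from weak compactness of the probability measures on $\bar{\B{R}}$. The real-valued projections $\langle \mu^N_t,f\rangle$ are handled by Aldous' criterion via the decomposition of Proposition~\ref{weakRepPropn}, with a deterministic $O(\delta)$ drift and martingale second moments of order $\delta/N$. The paper cites Theorem 1.18 of Etheridge, which is essentially the Jakubowski-type criterion you invoke. Your explicit choice of $C^2$ test functions constant near $\pm\infty$ makes precise the paper's looser phrasing of $\C{BC}^{2,1}\cap C_b(\B{R})$ as the dense class in $C(\bar{\B{R}})$.
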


\begin{proof}
    In order to show that the sequence $((\mu_t^N)_{t\geq 0})_{N=1,2,\ldots}$ is tight in $\C{P}(\C{D}([0,T],\C{M}^w_F(\bar{\B{R}})))$ with respect to the Skorokhod topology, it is sufficient (Theorem 1.18, \cite{etheridge}) to show the following conditions:
    \begin{enumerate}[(i)]
        \item \textit{Compact containment:} For all $\epsilon > 0$ there exists a compact subset $K_\epsilon\subseteq \C{M}_F^w(\bar{\B{R}})$ such that $$\inf_{N\in \B{N}} \B{P}(\mu_t^N \in K_\epsilon\;  \forall \, t\in [0,T])>1-\epsilon$$
        \item \textit{Tightness of real-valued processes:} The sequence of laws of $(\langle \mu_t^N,f\rangle)_{0\leq t\leq T}$ is tight in $\C{P}(\C{D}([0,T],\B{R}))$ for every function $f$ in a dense subset of $C_b(\bar{\B{R}})$
    \end{enumerate}
    
    Observe that the subset $\{\mu:\mu(\bar{\B{R}})=1\}\subseteq \C{M}_F^w(\bar{\B{R}})$, the subset consisting of probability measures, is a compact subset since $\bar{\B{R}}$ is itself compact. This is easily seen by observing that any subset of $\{\mu:\mu(\bar{\B{R}})=1\}$ is tight, since $\bar{\B{R}}$ is a compact subset of $\bar{\B{R}}$ such that $\mu(\bar{\B{R}})>1-\epsilon$ for all $\mu$. Then since $\mu_t^N(\B{R})=1$ for all $t\in [0,T]$ and $N\in \B{N}$, the compact containment condition (i) holds immediately. 
    
    We now prove condition (ii). Since each $\mu_t^N$ has support on $\B{R}$, thus it is sufficient to show that $((\langle \mu_t^N,f\rangle)_{t\geq 0})_{N=1,2,\ldots}$ is tight for all functions in a dense subset of $C_b(\B{R})$. We choose $\C{BC}^{2,1}(\B{R}\times [0,\infty),\B{R})\cap C_b(\B{R})$ to be our dense subset of $C_b(\B{R})$. By Aldous' tightness criterion (see, for example, Theorem 16.10, \cite{billingsley}), $((\langle \mu_t^N,f\rangle)_{t\geq 0})_{N=1,2,\ldots}$ is tight if the following conditions hold:
    \begin{enumerate}[A]
        \item For every $m>0$, $\lim_{a\to\infty}\limsup_{N\to\infty}\B{P}(\sup_{0\leq t\leq m}|\langle \mu_t^N,f\rangle|\geq a)=0$.
        \item For each $\epsilon, \eta, m >0$, there exists $\delta_0, N_0$ such that if $\delta \leq \delta_0$, $N\geq N_0$, and $\tau$ is a stopping time such that $\tau\leq m$, then $\B{P}(|\langle \mu_{\tau+\delta}^N,f\rangle-\langle \mu^N_{\tau},f\rangle|\geq \epsilon)\leq \eta$.
    \end{enumerate}

    Condition A follows immediately by observing that $|\langle \mu_t^N, f\rangle| \leq \|f\|_\infty$, therefore since $f$ is bounded, then for $a\geq \|f\|_\infty$, this probability is $0$. Now let us prove condition B. Since $f\in \C{BC}^{2,1}(\B{R}\times [0,\infty),\B{R})\cap C_b(\B{R})$, thus $f_t=0$, and $f$, $f_x$, and $f_{xx}$ are all bounded for $t\in [0,m+\delta]$, say by a constant $C_{f}$. So \eqref{weakRep} gives that
    \begin{align*}
    |\langle \mu_{\tau+\delta}^N,f\rangle - \langle \mu_\tau^N,f\rangle| %= \Bigg|\int_\tau^{\tau+\delta}\langle \mu_s^N,\frac12 f_{xx}+ r(s)f\rangle \\ &- \left\langle \mu^N_s, Nfr(s)\int_{(H\star \mu^N_s)(\cdot)-1/N}^{(H\star \mu^N_s)(\cdot)}\psi(u)du \right\rangle ds + (M_{\tau+\delta}^{N,W}-M_{\tau}^{N,W}) + (M_{\tau+\delta}^{N,P}-M_{\tau}^{N,P})\Bigg| \\
    \leq  (1+C_r+C_rC_\psi)C_f\delta+|M_{\tau+\delta}^{N,W}-M_{\tau}^{N,W}| + |M_{\tau+\delta}^{N,P}-M_{\tau}^{N,P}|
    \end{align*}
    %Since $f\in \C{BC}^{2,1}(\B{R}\times [0,\infty),\B{R})\cap C_b(\B{R})$, thus $f$, $f_x$, and $f_{xx}$ are all bounded for $t\in [0,m+\delta]$, say by a constant $C_{f}$. Moreover, $r$ and $\psi$ are bounded, say by constants $C_r$ and $C_\psi$ respectively, therefore:
    %\begin{align*}\Bigg|\int_\tau^{\tau+\delta}\langle \mu_s^N,\frac12f_{xx} &+ r(s)f\rangle - \Big\langle \mu^N_s, Nfr(s)\int_{(H\star \mu^N_s)(\cdot)-1/N}^{(H\star \mu^N_s)(\cdot)}\psi(u)du \Big\rangle ds\Bigg|\\ &\leq \int_{\tau}^{\tau+\delta}(\frac12+C_r)C_f + C_rC_\psi C_fds \leq (1+C_r+C_rC_\psi)C_f\delta.\end{align*}

    %Then by the Burkholder-Davis-Gundy inequality, and the bounds given in the proof of Theorem \ref{weakRepPropn}, there exists a constant $C$ such that for any $0<s<t\leq m+\delta$, 
    %\begin{align}\label{WmartConv}\B{E}[|M_{t}^{N,W}-M_{s}^{N,W}|^2]\leq C\B{E}[[M^{N,W}]_t]\leq \frac{C\|f_x\|_\infty^2 (t-s)}{N}\leq \frac{CC_f^2(t-s)}{N},\end{align}
    %and
    %\begin{align}\label{PmartConv}\B{E}[|M_t^{N,P}-M_s^{N,P}|^2]\leq C\B{E}[[M^{N,P}]_t]\leq \frac{4C\|f\|_\infty^2}{N}\int_s^t r(s)ds \leq \frac{4CC_f^2C_r(t-s)}{N}.\end{align}
    %Therefore by Markov's inequality,
    %$$\B{P}\big(|M_{\tau+\delta}^{N,W}-M_{\tau}^{N,W}|^2>\epsilon^2/9\big)\leq \frac{9CC_f^2\delta}{N\epsilon^2}, \quad \B{P}\big(|M_{\tau+\delta}^{N,P}-M_\tau^{N,P}|^2>\epsilon^2/9\big)\leq \frac{36CC_f^2C_r\delta}{N\epsilon^2}.$$
    %Hence choosing $N_0$ and $\delta_0$ such that $(1+C_r+C_r C_\psi)C_f\delta_0 < \epsilon/3$, $9CC_f^2\delta_0/N_0\epsilon^2 <\eta/2$, and $36CC_f^2C_r\delta_0/N_0\epsilon^2 < \eta/2$, then:
    Therefore by Markov's inequality and the Burkholder-Davis-Gundy inequality, there exists a constant $C$ such that:
    \begin{align*}
        \B{P}(|\langle \mu_{\tau+\delta}^N,&f\rangle-\langle \mu_\tau^N,f\rangle|> \epsilon) \leq \B{P}\Big((1+C_r+C_r C_\psi)C_f\delta+|M_{\tau+\delta}^{N,W}-M_\tau^{N,W}|+|M_{\tau+\delta}^{N,P}-M_{\tau}^{N,P}|>\epsilon\Big)\\
        &\leq \is_{\{(1+C_r+C_rC_\psi)C_f\delta\geq \epsilon/3\}}+\B{P}(|M_{\tau+\delta}^{N,W}-M_\tau^{N,W}|>\epsilon/3)+\B{P}(|M_{\tau+\delta}^{N,P}-M_{\tau}^{N,P}|>\epsilon/3) \\
        &\leq \is_{\{(1+C_r+C_rC_\psi)C_f\delta\geq \epsilon/3\}} + \frac{9}{\epsilon^2}\B{E}[|M_{\tau+\delta}^{N,W}-M_\tau^{N,W}|^2]+\frac{9}{\epsilon^2}\B{E}[|M_{\tau+\delta}^{N,P}-M_\tau^{N,P}|^2]\\
        &\leq \is_{\{(1+C_r+C_rC_\psi)C_f\delta\geq \epsilon/3\}} + \frac{9C}{\epsilon^2}\B{E}[[M^{N,W}]_\delta]+\frac{9C}{\epsilon^2}\B{E}[[M^{N,P}]_\delta]\xrightarrow[N\to\infty,\delta\to 0]{}0\\
    \end{align*}
    Therefore condition B follows, hence the sequence $(\langle \mu_t^N,f\rangle)_{t\geq 0})_{N=1,2,\ldots}$ is tight, thus proving condition (ii), and completing the proof. 
\end{proof}

A very similar method to the above is employed by Etheridge (Section 1.4, \cite{etheridge}). To prove the compact containment condition, they instead uses the one-point compactification of $\B{R}$, $\hat{\B{R}}$, instead of $\bar{\B{R}}$ (the two-point compactification of $\B{R}$). However, since $+\infty$ and $-\infty$ are not distinguished in $\hat{\B{R}}$, continuous bounded functions $f\in C_b(\hat{\B{R}})$ must have $\lim_{x\to-\infty}f(x)=\lim_{x\to+\infty}f(x)$. However we wish to have convergence of $\langle \mu_t^N, f(\cdot,t)\rangle$ for functions $f:\bar{\B{R}}\times [0,\infty)\to \B{R}$ with $f(-\infty,t)\neq f(+\infty,t)$, which is why we use $\C{M}^w_F(\bar{\B{R}})$ instead. 

This generalisation from $\C{M}_F^w(\B{R})$ to $\C{M}_F^w(\bar{\B{R}})$ will not cause any additional difficulties for us. Since $(Q^N_T)_{N=1,2,\ldots}$ is tight, therefore by Prokhorov's theorem (Theorem 5.1, \cite{billingsley}), there is a weakly convergent subsequence $(Q^{N_k}_T)_{k=1,2,\ldots}$. By appealing to a comparison with branching Brownian motion, we can show that any subsequential limit $(\mu_t^\infty)_{t\geq 0}$ is absolutely continuous with respect to the Lebesgue measure and has $\mu_t^\infty(\{-\infty,\infty\})=0$ for all times $t\in [0,T]$ almost surely. %\JB{Not sure what this does: As a result, this means that the corresponding cumulative distribution function, $(H\star \mu_s^\infty)(x)$, is continuous in $x$,} and, by the Radon-Nikodym Theorem, this means that if $(\mu_s^\infty)_{s\in [0,T]}\sim Q^\infty_T$, then for any fixed $s\in [0,T]$, $\mu_t^\infty$ $Q^\infty_T$-a.s. has a density function $m_s(x)$ such that $\mu_s^\infty(A)=\int_A m_s(x)dx$ for any measurable set $A$. %This comparison also yields that any subsequential limit of $(\mu^N)_{N\geq 1}$ has exponentially decaying tails in the sense that the density function $m_s$ has $m_s(x)=O(e^{-|x|})$ as $|x|\to\infty$.

\vspace{1em}

\begin{lemma} \label{absContEveryTime} Let $Q^\infty_T$ be a subsequential limit of $(Q^N_T)_{N\geq 1}$ and let $(\mu^\infty_t)_{t\in [0,T]}\sim Q^\infty_T$. Then $\mu_t^\infty\ll\lambda$ and $\mu^\infty_t(\{-\infty,\infty\})=0$ for all $t\in [0,T]$ $Q_T^\infty$-a.s. 
\end{lemma}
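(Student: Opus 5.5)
We compare the $(\psi,r,N)$-BBM with a branching Brownian motion without killing, which dominates it. In the construction of Section \ref{constructionSection}, every particle alive at time $t$ in the $(\psi,r,N)$-BBM is the descendant of an initial particle through a line of Brownian increments and branchings. Hence we can couple it with the BBM $(Z^N_t)$ that starts from the same $N$ atoms, uses the same Brownian motions and the same branching events (rate $r(t)$ per particle, driven by $\C{N}$ and $\C{I}$), but never kills anyone. Let $\nu^N_t:=\frac1N\sum_{z\in Z^N_t}\delta_z$. The surviving particles of the $(\psi,r,N)$-BBM form a subset of $Z^N_t$, so pathwise
$$\mu^N_t\le \nu^N_t \quad\text{for all } t\in[0,T] \text{ as measures on } \B{R}.$$

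By the many-to-one formula,
$$\B{E}[\nu^N_t(A)\mid \rho^N]=e^{\int_0^t r(s)ds}\int p_t(x,A)\,\rho^N(dx),$$
where $p_t$ is the Gaussian heat kernel.

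\textbf{Step 1 (tails).} Take $A=(-\infty,-K]\cup[K,\infty)$. For $t\le T$ the mean is at most $e^{C_rT}\sup_{t\le T}\int p_t(x,A)\rho^N(dx)$. Since $\rho^N\to\rho$ weakly, this is small uniformly in $N$ once $K$ is large.

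To handle all $t$ simultaneously, we use $\sup_{t\le T}\mu^N_t(A)$. We bound it by the number of particles that ever leave $[-K/2,K/2]$ up to time $T$, divided by $N$, together with a reflection-principle estimate. This mean is again uniformly small.

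Pass to the limit with open/closed sets and the portmanteau theorem, testing against continuous functions on $\bar{\B{R}}$ supported near $\pm\infty$. This gives
$$\B{E}_{Q^\infty}\Big[\sup_{t}\mu^\infty_t(\bar{\B{R}}\setminus(-K,K))\Big]\to 0 \quad\text{as } K\to\infty,$$
and hence $\mu^\infty_t(\{\pm\infty\})=0$ for all $t$ a.s.

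\textbf{Step 2 (absolute continuity).} For a nonnegative $f\in C_c(\B{R})$ and $t>0$ we have
$$\B{E}\langle\mu^N_t,f\rangle\le e^{C_rT}\langle\rho^N,P_tf\rangle\to e^{C_rT}\langle \rho,P_tf\rangle\le e^{C_rT}\|\rho_{\text{dens}}\|\ldots$$
This handles fixed $t$ only in expectation, but we need a pathwise domination.

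The better route is to use the second moment of BBM (or independence across the $N$ initial ancestors) to show that $\langle\nu^N_t,f\rangle-\B{E}\langle\nu^N_t,f\rangle\to0$ in $L^2$, with variance $O(1/N)$. Then $\nu^N_t\to\bar\nu_t:=e^{\int_0^t r}\rho P_t$ in probability for each fixed $t$. Continuity in $t$ and a countable dense set of times and test functions upgrade this to uniform in $t$ convergence in probability, since $\langle\nu^N_t,f\rangle$ is a semimartingale whose martingale part is small, as in Proposition \ref{weakRepPropn}.

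Passing $\mu^N\le\nu^N$ to the limit along the subsequence (joint convergence via Skorokhod representation) gives, almost surely,
$$\langle\mu^\infty_t,f\rangle\le e^{C_rT}\langle\rho P_t,f\rangle$$
for all $t$ and all nonnegative $f$ in a countable determining class. Therefore $\mu^\infty_t\le e^{C_rT}\rho P_t$ on $\B{R}$. Since $\rho\ll\lambda$ (and $\rho P_t\ll\lambda$ for $t>0$ in any case), we get $\mu^\infty_t\ll\lambda$ for every $t$.

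\textbf{Main obstacle.} We expect the hard part to be passing the domination from the random measures $\mu^N$ to $\mu^\infty$ uniformly in time. This requires a law of large numbers for $\nu^N$ in Skorokhod space, not just at fixed times, and it requires the inequality to survive weak limits. Lower semicontinuity on open sets handles this direction, because we bound $\mu^\infty(O)\le\liminf\mu^N(O)\le\lim\nu^N(\bar O)$.
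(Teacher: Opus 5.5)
Your proposal is correct and follows the paper's route: dominate $\mu^N_t$ pathwise by the $1/N$-rescaled empirical measure of a killing-free BBM started from $\rho^N$, then use that the BBM limit is the deterministic, absolutely continuous measure $e^{\int_0^t r(s)ds}\rho P_t$. The paper builds the coupling by colouring one blue particle red at each blue branching event; this is the precise form of your ``surviving particles are a subset'' coupling, since the extra lineages need their own Brownian motions and branching clocks beyond $\mathcal{N}$ and the $W_i$.

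Your version is more careful than the paper's one-line conclusion. You make explicit the joint convergence that comes from the BBM limit being deterministic, the passage of the inequality to all $t$ simultaneously, and the control of mass at $\pm\infty$. You also replace the appeal to the appendix hydrodynamic limit with a direct many-to-one and variance argument.
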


This proof essentially follows from the fact that the $(\psi,r,N)$-BBM can be bounded above by a branching Brownian motion started from $N$ particles, which has a well known hydrodynamic limit which is absolutely continuous with respect to the Lebesgue measure for all $t\in [0,T]$. 

\begin{proof}
    We construct the $(\psi,r,N)$-BBM from a branching Brownian motion as follows. Consider a branching Brownian motion $(X_u(t):u\in \C{U}(t))_{t\geq 0}$ starting from $N$ particles according to the initial configuration $\rho^N=\sum_{i=1}^N \delta_{x_i^N}$ which branches at time-inhomogeneous rate $r$. Let $\C{U}(t)$ denote the set of particles at time $t$, and $\C{U}^i(t)$ be the subset of $\C{U}(t)$ consisting of the descendants of the particle which starts at location $x_i^N$. Now we will colour the particles of the BBM blue and red. Initially, all particles are blue. Then at each branching time of a blue particle, we colour exactly one blue particle red; the $j$\textsuperscript{th} leftmost blue particle being coloured red with probability $\int_{(j-1)/N}^{j/N}\psi(u)du$. Subsequently, blue particles always branch into blue particles and red into red. Clearly there are always exactly $N$ blue particles, and the subset of blue particles describes a $(\psi,r,N)$-BBM. 
    
    Now let $\hat{\mu}^N_t = \frac1N \sum_{i=1}^N\sum_{u\in \C{U}^i(t)}\delta_{X_u(t)}$ be the rescaled empirical measure of this coloured BBM process at time $t$. It is well-known (and proven in the Appendix, Theorem \ref{bbmHydroAppendix}, for completeness) that if $\hat{Q}_T^N$ is the law of $(\hat{\mu}_t^N)_{t\in [0,T]}$, then $\hat{Q}_T^N$ converges weakly to $\hat{Q}_T^\infty$, and if $\hat{\mu}^\infty \sim \hat{Q}^\infty_T$, then $\hat{\mu}_t$ has density $u(x,t)=\exp(\int_0^t r(s)ds)\frac{\partial}{\partial x}\B{P}_\rho(B(t)\leq x)$ $\hat{Q}^\infty_T$-a.s., and $u$ is the unique classical solution to the PDE $u_t=\frac12u_{xx}+r(t)u$. Therefore since $\hat{\mu}_t^\infty\ll \lambda$ and $\hat{\mu}_t^\infty(\{-\infty,\infty\})=0$ for all $t\in [0,T]$ $\hat{Q}^\infty_T$-a.s., and $\mu_t^N$ is dominated by $\hat{\mu}_t^N$ for all $N$, thus $\mu_t^\infty\ll \lambda$ and $\mu_t^\infty(\{-\infty,\infty\})=0$ for all $t\in [0,T]$ $Q^\infty_T$-a.s..
    %
    %Moreover, since the density of $\hat{\mu}^\infty_t$ is described by $u(\cdot,t):\B{R}\to \B{R}$, thus $\hat{\mu}^\infty_t$ has no mass at infinity (ie. $\hat{\mu}^\infty_t(\{-\infty,\infty\})=0$), and hence $\mu_t^\infty$ similarly has no mass at infinity $Q_T^\infty$-almost surely.
\end{proof}

Since we know that $\mu_t^\infty$ has no mass at $\infty$, convergence in the space $\C{D}([0,T],\C{M}^w_F(\bar{\B{R}}))$ implies convergence in the space $\C{D}([0,T],\C{M}^w_F(\B{R}))$, which is what we use from here onwards. Next we show, by Kolmogorov's continuity criterion, that the map $t\mapsto \mu_t^\infty$ is continuous on $[0,T]$ $Q^\infty_T$-a.s.. In particular:

\vspace{1em}

\begin{lemma} \label{asContinuity} Let $Q^\infty_T$ be a subsequential limit of $(Q^N_T)_{N=1,2,\ldots}$ and let $(\mu^\infty_t)_{t\in [0,T]}\sim Q^\infty_T$. Then the map $t\mapsto \mu_t^\infty$ is continuous on $[0,T]$ with respect to the weak topology $Q_T^\infty$-a.s..
\end{lemma}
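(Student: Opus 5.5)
The plan is to prove continuity of $t\mapsto\langle\mu^\infty_t,f\rangle$ for each $f$ in a countable family $\{f_k\}\subset \C{BC}^{2,1}(\B{R}\times[0,\infty),\B{R})\cap C_b(\B{R})$ that is convergence determining for the weak topology on probability measures. Continuity with respect to the weak topology then follows on the intersection of countably many almost sure events.

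Fix such an $f$ (constant in time) and start from the representation \eqref{weakRep} of Proposition \ref{weakRepPropn}. Let $C_r$, $C_\psi$, $C_f$ bound $r$, $\psi$ and $f,f_x,f_{xx}$. The integrand in \eqref{weakRep} is bounded uniformly in $N$: the selection term satisfies
\[ N\int_{(H\star\mu^N_s)(x)-1/N}^{(H\star\mu^N_s)(x)}\psi(u)\,du\le C_\psi. \]
Hence, for $0\le s<t\le T$,
\[ |\langle\mu^N_t,f\rangle-\langle\mu^N_s,f\rangle|\le C_1|t-s|+|M^{N,W}_t-M^{N,W}_s|+|M^{N,P}_t-M^{N,P}_s|. \]

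For the martingale terms I would use Burkholder--Davis--Gundy with exponent $4$, rather than Markov on the second moment, so as to obtain a power of $|t-s|$ exceeding one.

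The Brownian martingale has $[M^{N,W}]_t-[M^{N,W}]_s\le C_f^2(t-s)/N$ deterministically, so
\[ \B{E}|M^{N,W}_t-M^{N,W}_s|^4\le C(t-s)^2/N^2. \]

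The jump martingale has $[M^{N,P}]_t-[M^{N,P}]_s\le 4C_f^2N^{-2}(\C{N}(t)-\C{N}(s))$. Since $\C{N}(t)-\C{N}(s)$ is Poisson with mean at most $NC_r(t-s)$, its second moment is at most $N^2C_r^2(t-s)^2+NC_r(t-s)$. This gives
\[ \B{E}|M^{N,P}_t-M^{N,P}_s|^4\le C\bigl((t-s)^2+(t-s)/N^3\bigr). \]

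Combining the three bounds, we obtain
\[ \B{E}|\langle\mu^N_t,f\rangle-\langle\mu^N_s,f\rangle|^4\le C\bigl((t-s)^2+(t-s)/N\bigr). \]

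The main obstacle is the residual $(t-s)/N$ term. It comes from the jumps: each jump of $\mu^N$ has size $O(1/N)$, so the prelimit is not continuous and a Kolmogorov bound cannot hold uniformly down to scales $|t-s|\lesssim 1/N$.

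To pass to the limit I would use the Skorokhod representation along the convergent subsequence $N_k$. Since the maximal jump of $\langle\mu^{N}_\cdot,f\rangle$ is at most $2C_f/N\to0$, any Skorokhod limit lies in $C([0,T])$. This alone already proves the lemma, because Skorokhod convergence of càdlàg paths whose jump sizes vanish forces a continuous limit.

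Alternatively, I would take fixed $s,t$ that are continuity points of the limit law (all but countably many). Fatou's lemma together with bounded convergence, using $|\langle\mu,f\rangle|\le C_f$, then gives
\[ \B{E}_{Q^\infty_T}|\langle\mu^\infty_t,f\rangle-\langle\mu^\infty_s,f\rangle|^4\le C(t-s)^2. \]
Kolmogorov's continuity criterion yields a continuous modification, which coincides with the càdlàg process $\mu^\infty$ on a dense set and hence everywhere.

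I would present the jump-size argument as the main line and keep the Kolmogorov estimate as the quantitative backup in the spirit of the lemma's introduction.
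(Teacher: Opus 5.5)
Your proposal is correct, and its main line takes a different route from the paper's. The paper's argument is essentially your ``backup''. It takes the Skorokhod time changes $\lambda_N$ and applies Fatou to $\mathbb{E}|\langle\mu^N_{\lambda_N(t_1)},f\rangle-\langle\mu^N_{\lambda_N(t_2)},f\rangle|^2$. It bounds the drift part by $C|\lambda_N(t_1)-\lambda_N(t_2)|^2$ and lets the martingale parts vanish via the quadratic-variation bounds of Proposition \ref{weakRepPropn} and BDG. This gives $\mathbb{E}|\langle\mu^\infty_{t_1},f\rangle-\langle\mu^\infty_{t_2},f\rangle|^2\le C|t_1-t_2|^2$, and the paper concludes with Kolmogorov plus the cadlag property.

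Your main argument uses only two facts. First, every jump of $t\mapsto\langle\mu^N_t,f\rangle$ has size at most $2\|f\|_\infty/N$. Second, $\{x\in D([0,T],\mathbb{R}):\sup_t|\Delta x(t)|\le\epsilon\}$ is closed in the $J_1$ topology: a jump of size $\epsilon'>\epsilon$ of the limit at $t_0$ forces jumps of size close to $\epsilon'$ in $x_n$ at $\lambda_n(t_0)$. Continuity of $\mu_\cdot\mapsto\langle\mu_\cdot,f\rangle$ from measure-valued to real cadlag paths, together with Portmanteau or Skorokhod representation, then shows the limit law charges only continuous paths.

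This route needs no moment bounds, no martingale estimates and no continuous-modification step. It also works for any bounded continuous $f$, so you can use a countable dense subset of $C(\bar{\mathbb{R}})$ directly. That avoids the paper's appeal to a countable dense subset of $C_b(\mathbb{R})$, which is not separable. The paper's route gives a quantitative modulus of continuity, which the lemma does not need.

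Two small remarks on your backup. First, the fourth moment buys nothing. With second moments the drift contributes $C_1^2(t-s)^2$ and the martingales contribute $O((t-s)/N)$. The residual $(t-s)/N$ appears with either moment and vanishes only in the limit, and that limit already gives the exponent $2>1$ Kolmogorov needs. Second, your limiting bound is established only for $s,t$ outside the countable set of fixed discontinuities of $Q^\infty_T$. Extend it to all $s,t$, by approximating from the right and using right-continuity and bounded convergence, before invoking Kolmogorov.
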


\begin{proof}
    Since $\mu_t^\infty$ is cadlag, we will show using the Kolmogorov continuity theorem to show that there exists a continuous modification of $\mu_t^\infty$, and therefore as a result $\mu_t^\infty$ is almost surely continuous (see Theorem 1, \cite{schilling}). So fix $t_1,t_2\in [0,T]$ and $f\in \C{BC}^{2,1}(\B{R}\times [0,\infty),\B{R})$. By the definition of convergence in the Skorokhod topology on $\C{D}([0,T],\C{M}^w_F(\B{R}))$, there exists a sequence of functions $\lambda_N:[0,T]\to[0,T]$ such that $\sup_{t\in [0,T]}|\lambda_N(t)-t|$ and $\sup_{t\in [0,T]}|\langle f,\mu_{\lambda_N(t)}^N\rangle-\langle f, \mu_t^\infty\rangle|$ converge to $0$ as $N\to\infty$. Therefore by Fatou's lemma:
    \begin{align*}
        \B{E}[|\langle \mu_{t_1}^\infty,f\rangle -\langle \mu_{t_2}^\infty,f\rangle|^2]&=\B{E}[\lim_{N\to\infty}|\langle \mu_{\lambda_N(t_1)}^N,f\rangle -\langle \mu_{\lambda_N(t_2)}^N,f\rangle|^2]\leq \liminf_{N\to\infty}\B{E}[|\langle \mu_{\lambda_N(t_1)}^N,f\rangle -\langle \mu_{\lambda_N(t_2)}^N,f\rangle|^2]
        %&\leq \liminf_{N\to\infty}\Bigg\{\left(\int_{t_1}^{t_2}\langle \mu_s^N,\frac12 f_{xx}(\cdot,s)+f_t(\cdot,s)+r(s)f(\cdot,s)\rangle - \langle \mu_s^N,Nf(\cdot,s)
    \end{align*}
    As in the proof of Proposition \ref{tightness}, using the representation \eqref{weakRep}, we can bound
    \begin{align*}
        |\langle \mu_{\lambda_N(t_1)}^N,f\rangle -\langle \mu_{\lambda_N(t_2)}^N,f\rangle|^2 \leq C|\lambda_N(t_1)-\lambda_N(t_2)|^2+9|M_{\lambda_N(t_1)}^{N,W}-M_{\lambda_N(t_2)}^{N,W}|^2+9|M_{\lambda_N(t_1)}^{N,P}-M_{\lambda_N(t_2)}^{N,P}|^2
    \end{align*}
    for some constant $C$. Then by \eqref{qvW} and \eqref{qvP} and the Burkholder-Davis-Gundy inequality, $\B{E}[|M_{\lambda_N(t_1)}^{N,W}-M_{\lambda_N(t_2)}^{N,W}|^2]$ and $\B{E}[|M_{\lambda_N(t_1)}^{N,P}-M_{\lambda_N(t_2)}^{N,P}|^2]$ converge to $0$ as $N\to\infty$, therefore:
    \begin{align*}
        \liminf_{N\to\infty}\B{E}[|\langle \mu_{\lambda_N(t_1)}^N,f\rangle &-\langle \mu_{\lambda_N(t_2)}^N,f\rangle|^2]\leq \liminf_{N\to\infty}C|\lambda_N(t_1)-\lambda_N(t_2)|^2 \\
        &= \liminf_{N\to\infty}9C(|\lambda_N(t_1)-t_1|^2+|t_1-t_2|^2+|t_2-\lambda_N(t_2)|^2)=9C|t_1-t_2|^2,
    \end{align*}
    since $\sup_{t\in [0,T]}|\lambda_N(t)-t|\xrightarrow[N\to \infty]{}0$. Therefore $\langle \mu_t^\infty,f\rangle$ satisfies the conditions of the Kolmogorov continuity theorem, therefore $\langle \mu_t^\infty,f\rangle$ has a continuous modification, and thus is almost surely continuous (Theorem 1, \cite{schilling}). Therefore $\langle \mu_t^\infty,f\rangle$ is continuous for all $f$ in a countable dense subset of $C_b(\B{R})$ almost surely, and hence $t\mapsto\mu_t^\infty$ is continuous with respect to the weak topology almost surely. 
\end{proof}

Next we prove that each subsequential limiting distribution is concentrated on the weak solutions of a deterministic PDE. 

\vspace{1em}
\begin{propn} \label{detConvPropn}
    Let $Q_T^\infty$ be a sub-sequential limit of $(Q^N_T)_{N=1,2,\ldots}$, and let $(\mu_t^\infty)_{t\in [0,T]}\sim Q^\infty_T$. Then for any $f\in \C{BC}^{2,1}(\B{R}\times [0,\infty),\B{R})$,
    $\mu^\infty$ satisfies $Q_T^\infty$-a.s. the equation
    \begin{align} \label{muinfLimit}
        \langle \mu_t^\infty, f(\cdot,t)\rangle = \langle \rho, f(\cdot,0)\rangle + \int_0^t \langle \mu_s^\infty, \frac12 f_{xx}(\cdot,s) &+ f_t(\cdot,s) + r(s)f(\cdot,s)\rangle - \langle \mu_s^\infty,f(\cdot,s)r(s)\psi((H\star \mu_s^\infty)(\cdot))\rangle ds.
    \end{align}
\end{propn}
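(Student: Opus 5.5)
Fix $f\in\C{BC}^{2,1}$ and define, on $\C{D}([0,T],\C{M}_F^w(\B{R}))$, the functional
\begin{align*}
\Phi_t(\mu):=\langle \mu_t,f(\cdot,t)\rangle-\langle \mu_0,f(\cdot,0)\rangle-\int_0^t\langle \mu_s,\tfrac12 f_{xx}+f_t+r(s)f\rangle-\langle\mu_s,f(\cdot,s)r(s)\psi((H\star\mu_s)(\cdot))\rangle\,ds.
\end{align*}
The plan is to show $\B{E}_{Q^\infty_T}[|\Phi_t(\mu)|\wedge 1]=0$ for each $t$ in a countable dense set of times and each $f$ in a countable dense family. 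Continuity in $t$ (Lemma \ref{asContinuity}) and a density argument in $f$ then give the statement for all $t$ and $f$ simultaneously, almost surely. Along the convergent subsequence $N_k$, I split $\Phi_t(\mu^{N})$ using Proposition \ref{weakRepPropn}. Since $\langle\mu^N_0,f(\cdot,0)\rangle\to\langle\rho,f(\cdot,0)\rangle$, the representation \eqref{weakRep} gives
\[\Phi_t(\mu^N)=M^{N,W}_t+M^{N,P}_t+o(1)+\int_0^t\Big\langle\mu^N_s,f(\cdot,s)r(s)\Big[N\!\int_{(H\star\mu^N_s)(\cdot)-1/N}^{(H\star\mu^N_s)(\cdot)}\psi(u)du-\psi((H\star\mu^N_s)(\cdot))\Big]\Big\rangle ds.\]
The martingale terms tend to $0$ in $L^2$ by BDG together with \eqref{qvW} and \eqref{qvP}. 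Because $\psi$ is uniformly continuous on $[0,1]$, the bracket is bounded by the modulus of continuity $\omega_\psi(1/N)$, so the last integral tends to $0$ deterministically. Hence $\B{E}[|\Phi_t(\mu^{N_k})|\wedge1]\to0$.

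To pass to the limit, I need $\B{E}_{Q^{N_k}_T}[|\Phi_t|\wedge1]\to\B{E}_{Q^\infty_T}[|\Phi_t|\wedge1]$. This follows from the continuous mapping theorem once $\Phi_t$ is shown to be continuous at $Q^\infty_T$-almost every path. That continuity is the main obstacle, because of the nonlinear term $\psi(H\star\mu_s)$. The terms linear in $\mu$ are handled as follows. The limit path is continuous by Lemma \ref{asContinuity}, so Skorokhod convergence to it is uniform convergence in time. Combined with the boundedness of the integrands, this gives convergence of $\langle\mu_t,f(\cdot,t)\rangle$ and of the linear time integrals; the time dependence of $f$ is handled by uniform continuity of $f$ on compacts together with tightness.

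For the nonlinear term, suppose $\mu^n\to\mu$ with each $\mu_s\ll\lambda$, which holds a.s. by Lemma \ref{absContEveryTime}. Then $F_s:=H\star\mu_s$ is continuous, so weak convergence $\mu^n_s\to\mu_s$ upgrades (Pólya's theorem) to uniform convergence $\sup_x|(H\star\mu^n_s)(x)-F_s(x)|\to0$. Uniform continuity of $\psi$ then gives $\sup_x|\psi(H\star\mu^n_s)-\psi(F_s)|\to0$, and so
\[|\langle\mu^n_s,f\psi(H\star\mu^n_s)\rangle-\langle\mu_s,f\psi(F_s)\rangle|\le\|f\|_\infty\,\omega_\psi\big(\|H\star\mu^n_s-F_s\|_\infty\big)+|\langle\mu^n_s-\mu_s,f\psi(F_s)\rangle|.\]
The second term vanishes because $f\psi(F_s)$ is bounded and continuous. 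Both terms are bounded uniformly in $s$, so dominated convergence over $s\in[0,t]$ yields convergence of the integral.

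Finally, $\B{E}_{Q^\infty_T}[|\Phi_t|\wedge1]=\lim_k\B{E}_{Q^{N_k}_T}[|\Phi_t|\wedge1]=0$. Therefore \eqref{muinfLimit} holds a.s. for each fixed $t$ and $f$, and the countability and continuity arguments above extend it to all $t$ and $f$.
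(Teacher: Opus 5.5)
Your proposal is correct and is essentially the paper's argument: the same weak representation \eqref{weakRep} with vanishing martingale terms, the use of Lemmas \ref{asContinuity} and \ref{absContEveryTime} to get $\mu^{N}_s\Rightarrow\mu_s$ for every $s$, and, for the nonlinear term, uniform continuity of $\psi$ combined with convergence of the CDFs and weak convergence against the bounded continuous function $f\,r\,\psi(H\star\mu_s)$. The differences are only in packaging. You absorb the $1/N$ discretisation error before taking limits, and you pass to the limit with the continuous mapping theorem applied to the bounded functional $|\Phi_t|\wedge 1$ rather than the Skorokhod representation. You also use P\'olya's theorem to get uniform convergence of $H\star\mu^n_s$ on all of $\B{R}$. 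This makes the paper's compact-set step cleaner, and your route also avoids its loosely justified final uniform-integrability step.
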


In particular, \eqref{muinfLimit} is the weak formulation of the non-linear equation:
\begin{align*}
    u_t(x,t) = \frac12 u_{xx}(x,t) + r(t)u(x,t)\left(1 -\psi\left(\int_{-\infty}^x u(y,t)dy\right)\right)
\end{align*}
which describes the limiting density of the process. We will subsequently transform this into its integrated version, the PDE \eqref{mainResultPDE} which describes the limiting cumulative distribution function.

\begin{proof}
    Without loss of generality, throughout this proof, we will consider the sub-sequential limit to be labelled $N=1,2,\ldots$. For measure-valued process $\mu = (\mu_t)_{t\geq 0}$, function $f(x,t)$, integer $N$, and time $t$, define:
    \begin{align*}G(\mu,f,N,t):=\langle \mu_t,f(\cdot,t)\rangle - \langle \mu_0,f(\cdot,0)\rangle - \int_0^t \langle \mu_s,&\frac12 f_{xx}(\cdot,s) + f_t(\cdot,s) + r(s)f(\cdot,s)\rangle \\
    &- \big\langle \mu_s,Nf(\cdot,s)r(s)\int_{(H\star \mu_s)(\cdot)-1/N}^{(H\star \mu_s)(\cdot)}\psi(u)du\big\rangle ds.\end{align*}

    Therefore by Proposition \ref{weakRepPropn}, $G(\mu^N,f,N,t)=M_t^{N,W}+M_t^{N,P}$. Now $|M_t^{N,W}+M_t^{N,P}|^2 \leq 4(|M_t^{N,W}|^2+|M_t^{N,P}|^2)$. Therefore by \eqref{qvW}, \eqref{qvP}, and the Burkholder-Davis-Gundy inequality, for every fixed $t\geq 0$:
    \begin{align}\label{EGto0}\B{E}[G(\mu^N,f,N,t)^2]\xrightarrow[N\to\infty]{}0.\end{align}

    %\textcolor{red}{I think the following argumentds are not right - they are treating the convergence as referring to the measures $\mu^N_t$ to $\mu_t^\infty$, but actually it is the measure on this space of measures... Instead we define a set of probability one on which this weak convergence of the measure $\mu_t^N$ to $\mu_t^\infty$ holds.}

    Now by the Skorokhod representation theorem, there exists a sequence of random variables $(\nu_t^{N_i})_{t\in [0,T]}$ and $(\nu_t)_{t\in [0,T]}$ defined on the same probability space $(\Omega, \C{F},\B{P})$, such that $(\nu_t^{N})_{t\in [0,T]}$ converges $\B{P}$-a.s. to $(\nu_t)_{t\in [0,T]}$ in the Skorokhod topology, with $(\nu_t^{N})_{t\in [0,T]}\sim Q_T^{N}$ and $(\nu_t)_{t\in [0,T]}\sim Q^\infty_T$. By Lemma \ref{asContinuity}, the map $t\mapsto\nu_t$ is continuous on $[0,T]$ $\B{P}$-a.s. Next we will prove that $\nu_s^N\Rightarrow\nu_s$ as $N\to\infty$ $\B{P}$-a.s., which is a simple consequence of continuity and convergence in the Skorokhod topology. By the triangle inequality, for $f\in C_b(\B{R}))$
    $$|\langle f,\nu_s^N\rangle-\langle f,\nu_s\rangle| \leq |\langle f,\nu_s^N\rangle- \langle f,\nu_{\lambda_N^{-1}(s)}\rangle| + |\langle f,\nu_{\lambda_N^{-1}}(s)\rangle-\langle f,\nu_s\rangle|.$$
    The first term converges to $0$ almost surely by the Skorokhod convergence of $(\nu^N_t)_{t\in [0,T]} \to (\nu_t)_{t\in [0,T]}$, and the second term converges to $0$ almost surely by the almost sure continuity of $t\mapsto\nu_t$ at $t=s$. Since $\C{BC}^{2,1}(\B{R}\times [0,\infty),\B{R})\subseteq C_b(\B{R})$, it immediately follows that for $f\in \C{BC}^{2,1}(\B{R}\times [0,\infty),\B{R})$ we have $\langle \nu_t^N, f(\cdot,t)\rangle \xrightarrow[N\to\infty]{\B{P}\text{-a.s.}}\langle \nu_t,f(\cdot,t)\rangle$ and $\langle \nu_0^N,f(\cdot,0)\rangle \xrightarrow[N\to\infty]{\B{P}\text{-a.s.}}\langle \rho,f(\cdot,0)\rangle$.
    
    Note that for $f\in \C{BC}^{2,1}(\B{R}\times [0,\infty),\B{R})$, since $r$ is bounded and continuous, thus $\frac12f_{xx}+f_t+rf$ is bounded and continuous. Thus $\langle \nu_s^N,\frac12 f_{xx}(\cdot,s)+f_t(\cdot,s)+r(s)f(\cdot,s)\rangle$ converges to $\langle \nu_s,\frac12 f_{xx}(\cdot,s)+f_t(\cdot,s)+r(s)f(\cdot,s)\rangle$ for all $s\in [0,t]$ $\B{P}$-almost surely. %Let $D\subset [0,t]$ be a countable dense subset of $[0,t]$, and let $\tilde{\Omega}\subseteq \Omega$ be such that for $\omega \in\tilde{\Omega}$, $\langle \nu_s(\omega)^N,\frac12 f_{xx}(\cdot,s)+f_t(\cdot,s)+r(s)f(\cdot,s)\rangle$ converges to $\langle \nu_s(\omega),\frac12 f_{xx}(\cdot,s)+f_t(\cdot,s)+r(s)f(\cdot,s)\rangle$ for every $s\in D$. Since $D$ is countable, and $\nu_s^N\xrightarrow[N\to\infty]{}\nu_s$ almost surely, thus $\B{P}(\tilde{\Omega})=1$. Since $\nu_s^N$ and $\nu_s$ are cadlag, $\frac12f_{xx}+f_t+rf$ is continuous in time, and $D$ is dense in $[0,t]$, thus this convergence holds for all $t\in [0,T]$. 
    Therefore by the dominated convergence theorem
    %\begin{align*}\lim_{N\to\infty}\int_0^t \langle \nu^N_s(\omega),\frac12 f_{xx}(\cdot,s)+f_t(\cdot,s)+r(s)f(\cdot,s)\rangle ds = \int_0^t \langle \nu^\infty_s(\omega),\frac12 f_{xx}(\cdot,s)+f_t(\cdot,s)+r(s)f(\cdot,s)\rangle ds\end{align*}
    %and hence
    \begin{align*}\int_0^t \langle \nu^N_s,\frac12 f_{xx}(\cdot,s)+f_t(\cdot,s)+r(s)f(\cdot,s)\rangle ds \xrightarrow[N\to\infty]{\B{P}\text{-a.s.}}\int_0^t \langle \nu^\infty_s,\frac12 f_{xx}(\cdot,s)+f_t(\cdot,s)+r(s)f(\cdot,s)\rangle ds.\end{align*}

    Extending the idea of the previous argument, since $\nu^N_s\underset{N\to\infty}{\Longrightarrow}\nu_s$ and $\nu_s\ll \lambda$ are true for all $s\in [0,T]$ $\B{P}$-a.s., define the measure $1$ set
    $$\hat{\Omega}:=\left\{\omega\in \Omega:\nu_s^N(\omega)\underset{N\to\infty}{\Longrightarrow}\nu_s(\omega),\nu_s(\omega)\ll \lambda \forall s\in [0,T]\right\}\subseteq \Omega.$$
    We will show that for $\omega\in \hat{\Omega}$, the convergence
    \begin{align}\label{branchingTermLangleConverge}\int_0^t\left\langle \nu^N_s(\omega),N f(\cdot,s)r(s)\int_{(H\star \nu_s^N(\omega))(\cdot)-1/N}^{(H\star \nu_s^N(\omega))(\cdot)}\psi(u)du\right\rangle ds\xrightarrow[N\to\infty]{} \int_0^t \langle\nu_s(\omega), f(\cdot,s)r(s)\psi((H\star \nu_s(\omega))(\cdot))\rangle ds
    \end{align} 
    holds. Fix $\epsilon > 0$. We will also fix $\omega \in \hat{\Omega}$, however we  omit to explicitly show dependence of $\omega$ for readability. Since the integrand of the left-hand side of (\ref{branchingTermLangleConverge}) is bounded by %$\|f\|_\infty\|r\|_\infty \|\psi\|_\infty \sup_{N\geq 1, s\in [0,t]}\langle \mu_s^N,1\rangle=
    $\|f\|_\infty\|r\|_\infty \|\psi\|_\infty$, therefore by the dominated convergence theorem:
    \begin{align*}\lim_{N\to\infty}\int_0^t\left\langle \nu^N_s,N f(\cdot,s)r(s)\int_{(H\star \nu_s^N)(\cdot)-1/N}^{(H\star \nu_s^N)(\cdot)}\psi(u)du\right\rangle ds = \int_0^t \lim_{N\to\infty}\left\langle \nu^N_s,Nf(\cdot,s)r(s)\int_{(H\star \nu_s^N)(\cdot)-1/N}^{(H\star \nu_s^N)(\cdot)}\psi(u)du\right\rangle ds.\end{align*}
    By the triangle inequality
    \begin{align}\nonumber\Big|\Big\langle \nu^N_s,Nf(\cdot,s)r(s)&\int_{(H\star \nu_s^N)(\cdot)-1/N}^{(H\star \nu_s^N)(\cdot)}\psi(u)du\Big\rangle - \langle \nu_s, f(\cdot,s)r(s)\psi((H\star \nu_s)(\cdot))\rangle\Big| \\
    &\label{triangleIneqLangles1}\leq \Big|\Big\langle \nu^N_s,N f(\cdot,s)r(s)\int_{(H\star \nu_s^N)(\cdot)-1/N}^{(H\star \nu_s^N)(\cdot)N}\psi(u)du\Big\rangle - \langle \nu_s^N, f(\cdot,s)r(s)\psi((H\star \nu_s^N)(\cdot))\rangle\Big| \\
    &\label{triangleIneqLangles2} + |\langle \nu_s^N, f(\cdot,s)r(s)\psi((H\star \nu_s^N)(\cdot))\rangle-\langle \nu_s^N, f(\cdot,s)r(s)\psi((H\star \nu_s)(\cdot))\rangle|\\
    &\label{triangleIneqLangles3}+ |\langle \nu_s^N, f(\cdot,s)r(s)\psi((H\star \nu_s)(\cdot))\rangle-\langle \nu_s, f(\cdot,s)r(s)\psi((H\star \nu_s)(\cdot))\rangle|
    \end{align}
    Since $\psi$ is continuous and $[0,1]$ is compact, there exists $N_0$ such that $N>N_0$ implies that for all $x\in [0,1]$, we have $|\psi(x)-\psi(u)|<\epsilon/(4\|f\|_\infty \|r\|_\infty)$ for all $u\in [x-1/N,x]$. Therefore for $N>N_0$:
    \begin{align*}\Big|\Big\langle \nu^N_s,&Nf(\cdot,s)r(s)\int_{(H\star \nu_s^N)(\cdot)-1/N}^{(H\star \nu_s^N)(\cdot)}\psi(u)du\Big\rangle - \langle \nu_s^N, f(\cdot,s)r(s)\psi((H\star \nu_s^N)(\cdot))\rangle\Big|
    \\ &\leq \Bigg\langle \nu_s^N, \|f\|_\infty\|r\|_\infty\Bigg|N\int_{(H\star\nu_s^N)(\cdot)-1/N}^{(H\star \nu_s^N)(\cdot)}\psi(u)du-\psi((H\star\nu_s^N)(\cdot)\Bigg|\Bigg\rangle \leq \Big|\Big\langle \nu_s^N, \frac{\epsilon}{4}\Big\rangle\Big| \leq \epsilon/4.\end{align*}

    This proves that \eqref{triangleIneqLangles1} converges to $0$ as $N\to\infty$. Next we tackle \eqref{triangleIneqLangles2}. Since $\nu_s^N$ converges weakly to $\nu_s$ and $\nu_s \ll \lambda$, thus $(H\star \nu_s^N)(x)\to(H\star \nu_s)(x)$ for all $x\in \B{R}$ as $N\to\infty$. By continuity of $\psi$, $\psi((H\star \nu_s^N)(x))$ converges to $\psi((H\star \nu_s)(x))$ for all $x\in \B{R}$. Moreover, since $\nu_t^N\underset{N\to\infty}{\Longrightarrow}\nu_t$, therefore by Prokhorov's theorem (Theorem 5.1, \cite{billingsley}), $(\nu_t^N)_{N=1,2,\ldots}$, is tight, therefore we can choose a compact set $K_\epsilon$ such that $\nu_s^N(\B{R}\setminus K_\epsilon)<\epsilon/(4\|f\|_\infty \|r\|_\infty \|\psi\|_\infty)$ for all $N\in \B{N}$. So
    \begin{align*}
        \Bigg| \int_{\B{R}\setminus K_\epsilon} &f(x,s)r(s)\psi((H\star \nu_s^N)(x))\nu_s^N(dx)-\int_{\B{R}\setminus K_\epsilon} f(x,s)r(s)\psi((H\star \nu_s)(x))\nu_s^N(dx)\Bigg|\leq \epsilon/4
    \end{align*}
    Then since $K_\epsilon$ is compact, $\psi((H\star \nu_s^N)(x))$ converges to $\psi((H\star \nu_s^\infty)(x))$ uniformly on $K_\epsilon$ as $N\to\infty$. Then since $f(\cdot,s)$ and $r$ are bounded, there exists $N_1$ such that $N>N_1$ implies
    $$|f(x,s)r(s)\psi((H\star \nu_s^N)(x))-f(x,s)r(s)\psi((H\star \nu_s^\infty)(x))|<\epsilon/4,$$
    for all $x\in K_\epsilon$, and therefore for any probability measure $\theta$, we have that $N>N_1$ implies
    \begin{align*}
        \Bigg| \int_{K_\epsilon} &f(x,s)r(s)\psi((H\star \nu_s^N)(x))\theta(dx)-\int_{K_\epsilon} f(x,s)r(s)\psi((H\star \nu_s)(x))\theta(dx)\Bigg|\\
        &\leq \int_{K_\epsilon} |f(x,s)r(s)\psi((H\star \nu_s^N)(x))-f(x,s)r(s)(H\star \nu_s)(x) |\theta(dx)\leq \frac{\epsilon}{4}\int_{K_\epsilon}\theta(dx)\leq \epsilon/4.
    \end{align*}
    Therefore, by the triangle inequality, for $N>N_1$, we have that
    \begin{align*}
        |\langle \nu_s^N, f(\cdot,s)r(s)\psi((H\star \nu_s^N)(\cdot))\rangle - \langle \nu_s^N, f(\cdot,s)r(s)\psi((H\star \nu_s)(\cdot))\rangle| \leq \epsilon/2
    \end{align*}
    
    This proves convergence of \eqref{triangleIneqLangles2}. Finally we tackle \eqref{triangleIneqLangles3}. Observe that $f(x,s)r(s)\psi((H\star \nu_s)(x))$ is continuous and bounded in $x$, therefore by the Portmanteau theorem, since $\nu_s^N \Rightarrow \nu_s$, we have
    $$\langle \nu_s^N, f(\cdot,s)r(s)\psi((H\star \nu_s)(\cdot))\rangle \xrightarrow[N\to\infty]{}\langle \nu_s, f(\cdot,s)r(s)\psi((H\star \nu_s)(\cdot))\rangle.$$
    So let $N_2$ be such that $N>N_2$ implies that $$|\langle \nu_s^N, f(\cdot,s)r(s)\psi((H\star \nu_s)(\cdot))\rangle -\langle \nu_s, f(\cdot,s)r(s)\psi((H\star \nu_s)(\cdot))\rangle|<\epsilon/4,$$ 
    thus for $N>\max\{N_0,N_1,N_2\}$, we have that:
    $$\Big|\Big\langle \nu^N_s,Nf(\cdot,s)r(s)\int_{(H\star \nu_s^N)(\cdot)-1/N}^{(H\star \nu_s^N)(\cdot)}\psi(u)du\Big\rangle - \langle \nu_s, f(\cdot,s)r(s)\psi((H\star \nu_s)(\cdot))\rangle\Big|\leq \epsilon.$$
    %This holds for all $s\in D$ and $\omega\in \hat{\Omega}$. Since a right-continuous function is determined by its values on a dense set, we therefore have point-wise convergence everywhere. 
    Therefore \eqref{branchingTermLangleConverge} holds for all $\omega\in \hat{\Omega}$, hence $\B{P}$-almost surely. Therefore for any suitable test function $f\in \C{BC}^{2,1}(\B{R}\times [0,\infty),\B{R})$, we have 
    \begin{align}\label{convOfGmuN}G(\mu^N,f,N,t)\xrightarrow[N\to\infty]{\B{P}\text{-a.s.}} \langle \mu_t^\infty&, f(\cdot,t)\rangle - \langle \rho, f(\cdot,0)\rangle \\
    \nonumber &- \int_0^t \langle \mu_s^\infty, \frac12 f_{xx}(\cdot,s) + f_t(\cdot,s) + r(s)f(\cdot,s)\rangle + \langle \mu_s^\infty,f(\cdot,s)r(s)\psi((H\star \mu_s^\infty)(\cdot))\rangle ds.\end{align}

    Fixing $t$ and $f$, and recalling the boundedness of each term of $G(\mu^N,f,N,t)$, we can observe that the sequence $(G(\mu^N,f,N,t))_{N\geq 0}$ is bounded uniformly in $N$. Hence $(|G(\mu^N,f,N,t)|)_{N\geq 0}$ is uniformly integrable. Then since convergence in expectation and uniform integrability imply almost sure convergence (see for example Appendixes, Proposition 2.3 in \cite{ethierKurtz}).%, thus and equation (\ref{EGto0}), we have that
    %$$\B{E}[\lim_{N\to\infty}|G(\mu^N,f,N,t)|^2]=\lim_{N\to\infty}\B{E}[|G(\mu^N,f,N,t)|^2]=0.$$
    Therefore \eqref{EGto0} yields that $\lim_{N\to\infty}G(\mu^N,f,N,t)=0$ $\B{P}$-a.s., completing the proof. 
\end{proof}

%The preceding results therefore show that $(\mu^N)_{N\geq 1}$ has subsequential limits, that any subsequential limit $\mu^\infty$ of the sequence $(\mu^N)_{N\geq 1}$ is almost surely a solution of the equation (\ref{muinfLimit}). 
Next, we show the deterministic result that any solution of \eqref{muinfLimit} with $\rho\ll \lambda$ has $\mu_s\ll \lambda$ for all $s>0$.

\vspace{1em}

\begin{propn} \label{absContEverywhere}
    Let $(\nu_t)_{t\geq 0}$ satisfy equation \eqref{muinfLimit} with $\nu_0=\rho\ll \lambda$. Then $\nu_t \ll \lambda$ for any $t>0$. 
\end{propn}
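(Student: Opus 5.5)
The plan is a comparison with the linear equation $u_t=\frac12u_{xx}+r(t)u$, using the fact that the nonlinear term in \eqref{muinfLimit} only removes mass. Write the nonlinear term as a potential: set $V(x,s):=r(s)\bigl(1-\psi((H\star\nu_s)(x))\bigr)$. This is a bounded measurable function, since $r$ and $\psi$ are bounded. Then \eqref{muinfLimit} says that $(\nu_t)$ is a measure-valued weak solution of the linear equation $\partial_t\nu=\frac12\nu_{xx}+V\nu$, with the coefficient $V$ frozen once $\nu$ is given.

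\textbf{Step 1: a duality bound.} Fix $t>0$ and a nonnegative $g\in C_c(\B{R})$. Let $f(x,s):=e^{\int_s^t r(\tau)d\tau}(P_{t-s}g)(x)$ for $s\in[0,t]$, where $P$ is the heat semigroup with generator $\frac12\partial_{xx}$. This $f$ solves the backward equation $f_t+\frac12f_{xx}+r(s)f=0$, and it is nonnegative. For $s<t$ it is smooth and bounded with bounded derivatives, so I will test \eqref{muinfLimit} on $[0,t-\delta]$ and let $\delta\to0$, using the continuity of $s\mapsto\nu_s$ from Lemma \ref{asContinuity}. Alternatively one can first mollify $g$ so that $f\in\C{BC}^{2,1}$ on $[0,t]$, and then approximate. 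The first three terms in the integrand cancel, and the remaining term $-\langle\nu_s,fr\psi(H\star\nu_s)\rangle$ is $\le 0$ because $f,r,\psi\ge0$. This gives
\[\langle\nu_t,g\rangle\le\langle\rho,f(\cdot,0)\rangle=e^{\int_0^tr}\int_{\B{R}}(P_tg)(x)\rho(dx).\]
If $r$ can be negative, I use $\|r\|_\infty$ in the exponent instead, so that the sign argument still applies with a slightly modified $f$.

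\textbf{Step 2: conclude absolute continuity.} The right-hand side equals $e^{\int_0^tr}\int g(y)\,(p_t*\rho)(y)\,dy$, where $p_t*\rho$ is an $L^1$ density (indeed bounded, for $t>0$). Hence $\nu_t\le C_t\,(p_t*\rho)\,dy$ as measures on $\B{R}$. A monotone-class argument extends the inequality from nonnegative $g\in C_c$ to indicators of Borel sets. So $\nu_t(A)=0$ whenever $\lambda(A)=0$. This holds for every $t>0$ and uses only $\rho$ being finite; the hypothesis $\rho\ll\lambda$ is needed only to cover $t=0$.

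\textbf{Main obstacle.} The delicate point is the legitimacy of the test function in Step 1. The admissible class is $\C{BC}^{2,1}$, and $f$ must have bounded derivatives up to time $t$, which fails near $s=t$ for non-smooth $g$. I would take $g\in C_c^\infty$, for which $P_{t-s}g$ has all derivatives uniformly bounded on $[0,t]$, so $f\in\C{BC}^{2,1}$ directly. Then I extend to indicators via monotone class, using that both sides are finite measures. A further subtlety is that $H\star\nu_s$ need not be continuous a priori. This does not matter: the argument only uses that the nonlinear term has a sign, not any regularity of it.
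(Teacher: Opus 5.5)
Your proof is correct. It uses the same duality mechanism as the paper: test against a backward heat-type solution, use $\psi\ge0$ to control the selection term, and let the heat kernel smooth $\rho$. The bookkeeping differs in a useful way.

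The paper tests \eqref{muinfLimit} against $\phi^n(x,s)=(p_{t-s}\star m^n\star\is_{[a,b]})(x)$, which solves only $\phi_t+\frac12\phi_{xx}=0$. The whole reaction term $\int_0^t\langle\nu_s,r\phi^n(1-\psi(H\star\nu_s))\rangle ds$ therefore survives. The paper bounds it by $\|r\|_\infty\int_0^t\langle\nu_s,p_{t-s}\star\is_{[a,b]}\rangle ds$ and closes with Gr\"{o}nwall to get $\nu_t([a,b])\le\|p_t\star\xi\|_\infty e^{\|r\|_\infty t}|b-a|$.

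You instead build the factor $e^{\int_s^t r}$ into the test function. The linear term then cancels exactly, and only the nonpositive selection term remains, so no Gr\"{o}nwall step is needed. Because $g\in C_c^\infty$, your $f$ lies in $\C{BC}^{2,1}$ on $[0,t]$ with no limiting procedure. The passage to Borel sets then follows from outer regularity of the finite measures $\nu_t$ and $(p_t*\rho)\,dy$. Keep this route rather than the $\delta\to0$ alternative, since Lemma \ref{asContinuity} concerns subsequential limits, not arbitrary solutions of \eqref{muinfLimit}.

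Your version avoids two delicate points in the paper's write-up:
\begin{itemize}
\item The paper's Gr\"{o}nwall integrand depends on $t$ through the test function, so it has to be read as an inequality for $t'\mapsto\langle\nu_{t'},p_{t-t'}\star\is_{[a,b]}\rangle$.
\item The mollified indicators need not be monotone in $n$, and for symmetric mollifiers they tend to $\tfrac12$ at $a,b$. Monotone convergence there should really be dominated convergence on open intervals.
\end{itemize}
Both routes give a bounded density, since $p_t*\rho\le\rho(\B{R})/\sqrt{2\pi t}$. Your remark that $\rho\ll\lambda$ is unnecessary for $t>0$ applies to the paper's bound as well.

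One correction to your aside on negative $r$. With exponent $\|r\|_\infty$, the leftover integrand is $f\,\bigl(r(1-\psi)-\|r\|_\infty\bigr)$. This can be positive when $r(s)=-\|r\|_\infty$ and $\psi>2$, so the sign argument does not go through as stated. You would need a constant $K\ge\|r\|_\infty(1+\|\psi\|_\infty)$ in the exponent. The point is moot here: $r$ is a Poisson intensity and hence nonnegative, and the paper's own bound $r(1-\psi)\le\|r\|_\infty$ tacitly uses the same fact.
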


\begin{proof}
    We will prove that $\nu_t \ll \lambda$ by showing that $\exists C=C(t)>0$ such that for any interval $[a,b]\subset \B{R}$, we have $\nu_t([a,b])\leq C|b-a|$, since this shows that $G(x):=\nu_t((-\infty,x])$ is Lipschitz continuous, a strictly stronger condition than absolute continuity. So fix $t$ and $[a,b]\subset \B{R}$. Let $I(x):=\is_{\{x\in [a,b]\}}$ and $p_t(x):=\frac{1}{\sqrt{2\pi} t}\exp(-x^2/2t)$ be the heat kernel. Since $\rho\ll\lambda$, say $\rho$ has density $\xi$ (i.e. $\rho(dx)=\xi(x)dx$), and let $m^n$ be a sequence of mollifiers such that $(m^n\star \xi)(x)$ is a monotonic increasing sequence converging pointwise to $\xi(x)$ as $n\to\infty$. 
    
    We define as a sequence of test functions $\phi^n(x,s):=(p_{t-s}\star m^n \star I)(x)$. This is the solution of the backward heat equation on $\B{R}\times [0,t]$ with terminal condition $(m^n\star I)(x)$; that is $\frac12\phi^n_{xx}+\phi_t=0$ and $\phi^n(x,t)=(m^n\star I)(x)$. Plugging this into \eqref{muinfLimit}, we yield:
    \begin{align}\label{mollifiedTestRep}
        \langle \nu_t,\phi^n(\cdot,t)\rangle = \langle \rho, \phi^n(\cdot,0)\rangle + \int_0^t \langle \nu_s, r(s)\phi^n(\cdot,s)(1-\psi((H\star \nu_s)(\cdot))\rangle ds
    \end{align}
    Now since $\phi^n(x,s)$ is a monotonically increasing sequence converging to $(p_{t-s}\star I)(x)$ thus %by the monotone convergence theorem (MCT)
    %\begin{align*} \langle \nu_t, \phi^n(\cdot,t)\rangle \xrightarrow[n\to\infty]{} \langle \nu_t, I\rangle =  \nu_t([a,b])\end{align*}
    using the monotone convergence theorem and Fubini's theorem
    \begin{align*} \langle \rho, \phi^n(\cdot,0)\rangle \xrightarrow[n\to\infty]{} \langle \rho, (p_t \star I)(\cdot)\rangle &= \int_{\B{R}} \left(\int_{\B{R}} p_t(x-y)I(y)dy\right) \xi(x)dx = \int_{[a,b]} (p_t\star \xi)(y)dy \\
    &\leq \|(p_t\star \xi)\|_\infty |b-a|\end{align*}
    As $\psi$ is positive and $r$ bounded, we can bound $\langle \nu_s, r(s)\phi^n(\cdot,s)(1-\psi((H\star \nu_s)(\cdot))\rangle \leq \|r\|_\infty \langle \nu_s, \phi^n(\cdot,s)\rangle$, and hence taking limits $n\to\infty$ in equation \eqref{mollifiedTestRep} and applying the monotone convergence theorem again to the integral term, we yield
    \begin{align*}
        \langle \nu_t, p_0 \star I \rangle \leq \|(p_t \star \xi)\|_\infty |b-a| + \int_0^t \|r\|_\infty \langle \nu_s, p_{t-s}\star I\rangle ds,
    \end{align*}
    so that Gr\"{o}nwall's inequality gives $\nu_t([a,b])=\langle \nu_t,p_0\star I\rangle \leq \|(p_t\star \xi)\|_\infty |b-a| e^{\|r\|_\infty t}$. This holds for any interval $[a,b]\subset \B{R}$, hence $\nu_t\ll \lambda$. 
\end{proof}

Next we show that if $(\mu^\infty_t)_{t\geq 0}$ satisfies (\ref{muinfLimit}), then $F(x,t):=\int_x^\infty \mu^\infty_t(dy)$ is the weak solution of the PDE (\ref{mainResultPDE}). As we do not yet know that the function $F(x,t)$ is differentiable either in $x$ or $t$ we cannot yet say that it is a classical solution. Consider taking
%\commentout{It is natural in this case to consider the weak solution of the PDE (\ref{mainResultPDE}) as it is not yet known that the function $F(x,t)$ is differentiable either in $x$ or $t$.} Take 
$U$ a classical solution of \eqref{mainResultPDE}, 
%If a solution $U$ to the PDE (\ref{mainResultPDE}) was to be differentiable once in time and twice in space, 
then multiplying by a test function $f\in C_c^{2,1}(\B{R}\times [0,\infty),\B{R})$ and integrating over $\B{R}\times [0,t]$  yields:
\begin{align}\label{weakSolnExample} 0 = \int_{\B{R}\times [0,t]} \left(U_t(x,t) -\frac12 U_{xx}(x,t) - r(t)U(x,t)+r(t)\int_{1-U(x,t)}^1 \psi(s)ds\right)f(x,t)dxdt,\end{align}
and then switching the derivatives from $U$ to $f$ gives 
\begin{align*}\int_{\B{R}\times [0,t]}U_t(x,s)f(x,s)(dx,ds)&= \int_{\B{R}}U(x,t)f(x,t)dx -\int_{\B{R}}U(x,0)f(x,0)dx - \int_{\B{R}\times [0,t]}U(x,t)f_t(x,t)dx\,dt\\
&=\langle U(\cdot,t),f(\cdot,t)\rangle - \langle U(\cdot,0),f(\cdot,0)\rangle - \int_0^t \langle U(\cdot,s),f_t(\cdot,s)\rangle ds\end{align*}
and 
$$\int_{\B{R}\times [0,t]}U_{xx}(x,t)f(x,t)(dx,ds)=\int_{\B{R}\times [0,t]} U(x,t)f_{xx}(x,t)(dx,ds)=\int_0^t \langle U(\cdot,s),f_{xx}(\cdot,s)\rangle ds$$
therefore equation (\ref{weakSolnExample}) could be written:
\begin{align} \label{UweakSolution}
        \langle U(\cdot,t),f(\cdot,t)\rangle - \langle U(\cdot,0), f(\cdot,0)\rangle = \int_0^t \langle U(\cdot,s),\frac12 f_{xx}(\cdot,s) &+ f_t(\cdot,s) + r(s)f(\cdot,s)\rangle \\
        &- r(s)\left\langle f(\cdot,s),\int_{1-U(\cdot,s)}^1 \psi(u)du\right\rangle ds. \notag
\end{align} 
Notice that no assumptions on the differentiability of $U$ are required in equation \eqref{UweakSolution}. This is the natural definition for a weak solution of equation (\ref{mainResultPDE}). 

\vspace{1em}

\begin{defn}
    We call $U:\B{R}\times [0,t]\to \B{R}$ a weak solution of the PDE (\ref{mainResultPDE}) if (\ref{UweakSolution}) holds for all test functions $f$ in a dense subset of $C_c^{2,1}(\B{R}\times [0,t],\B{R})$.
\end{defn}

\vspace{1em}

\begin{propn} \label{cdfPdfEquiv}
    Suppose that $(\mu_t)_{t\geq 0}$ satisfies (\ref{muinfLimit}); that is
    \begin{align*} \langle \mu_t, f(\cdot,t)\rangle = \langle \rho ,f(\cdot,0)\rangle + \int_0^t \langle \mu_s,\frac12 f_{xx}(\cdot,s)+f_t(\cdot,s) + r(s)f(\cdot,s)\rangle - \left\langle \mu_s,f(\cdot,s)r(s)\psi((H\star\mu_s)(\cdot)\right\rangle ds, 
    \end{align*}
    for every $f\in \C{BC}^{2,1}(\B{R}\times [0,\infty),\B{R})$. %, that $\mu_0(\B{R})=1$, and that $\mu_0\ll \lambda$. 
    Define $F(x,t):=(\tilde{H}\star \mu_t)(x)$. Then $\langle \mu_t,1\rangle=1$ for all $t\geq 0$ and $F$ %is a weak solution to the PDE (\ref{mainResultPDE}) in the sense that $F$ 
    satisfies
    \begin{align}\label{cdfFormulation}
        \langle F(\cdot,t),f(\cdot,t)\rangle - \langle \tilde{H}\star \rho, f(\cdot,0)\rangle = \int_0^t \langle F(\cdot,s),\frac12 f_{xx}(\cdot,s) &+ f_t(\cdot,s) + r(s)f(\cdot,s)\rangle \\
        &- r(s)\left\langle f(\cdot,s),\int_{1-F(\cdot,s)}^1 \psi(u)du\right\rangle ds. \notag
    \end{align}
    for all test functions $f$ such that $(H\star f(\cdot,t))(x)\in \C{BC}^{2,1}(\B{R}\times [0,\infty),\B{R})$.
\end{propn}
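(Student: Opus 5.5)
The plan is to derive \eqref{cdfFormulation} by inserting the antiderivative of $f$ as a test function in \eqref{muinfLimit}. Everything rests on the duality
$$\langle \mu_t, (H\star h)(\cdot)\rangle = \int_{\B{R}}\int_{-\infty}^x h(y)\,dy\,\mu_t(dx) = \int_{\B{R}} h(y)\,\mu_t([y,\infty))\,dy = \langle F(\cdot,t),h\rangle ,$$
which is Fubini and needs $h$ integrable. Throughout I use Proposition \ref{absContEverywhere}: $\mu_s\ll\lambda$ for every $s$, so $\Phi_s(x):=(H\star\mu_s)(x)$ is continuous and $F(\cdot,s)=\mu_s(\B{R})-\Phi_s$. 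I also write $\Psi(u):=\int_0^u\psi(v)\,dv$, extending $\psi$ continuously and boundedly beyond $[0,1]$ if needed before the mass is known.

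\textbf{Step 1: conservation of mass.} Take $f\equiv 1$ in \eqref{muinfLimit}. Since $\Phi_s$ is continuous and increases from $0$ to $m(s):=\langle\mu_s,1\rangle$, the chain rule for Stieltjes integrals gives
$$\langle \mu_s,\psi(\Phi_s)\rangle=\int\psi(\Phi_s)\,d\Phi_s=\Psi(m(s)).$$
Hence $m(t)=1+\int_0^t r(s)\bigl(m(s)-\Psi(m(s))\bigr)ds$. Because $\psi$ is bounded, $u\mapsto u-\Psi(u)$ is Lipschitz. Since $\Psi(1)=1$, the constant $m\equiv1$ solves this integral equation, and uniqueness for Lipschitz ODEs (Gronwall) forces $m\equiv 1$. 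In particular $\Phi_s$ takes values in $[0,1]$ and $F=1-\Phi_s$.

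\textbf{Step 2: linear terms.} Given admissible $f$, set $g(x,t):=(H\star f(\cdot,t))(x)=\int_{-\infty}^x f(y,t)\,dy$; by hypothesis $g\in\C{BC}^{2,1}$, so it may be used in \eqref{muinfLimit}.
\begin{itemize}
\item By the duality, $\langle\mu_t,g(\cdot,t)\rangle=\langle F(\cdot,t),f(\cdot,t)\rangle$ and $\langle\rho,g(\cdot,0)\rangle=\langle\tilde H\star\rho,f(\cdot,0)\rangle$.
\item Differentiating under the integral gives $g_t=H\star f_t$.
\item $g_{xx}=f_x=H\star f_{xx}$, using that $f_x(\cdot,t)$ vanishes at $-\infty$; this is implicit in $g$ being bounded with bounded derivatives.
\end{itemize}
So the linear part of \eqref{muinfLimit} becomes $\int_0^t\langle F,\frac12 f_{xx}+f_t+r f\rangle ds$.

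\textbf{Step 3: nonlinear term (the main obstacle).} I need
$$\langle\mu_s,g\,\psi(\Phi_s)\rangle=\int g\,d\Psi(\Phi_s)=\Bigl\langle f(\cdot,s),\int_{1-F(\cdot,s)}^1\psi(u)\,du\Bigr\rangle .$$
The first equality again uses the continuity of $\Phi_s$, so that $\psi(\Phi_s)\,d\Phi_s=d(\Psi\circ\Phi_s)$. For the second I integrate by parts on $[-R,R]$ and let $R\to\infty$:
\begin{itemize}
\item the boundary terms are $g(R)\Psi(\Phi_s(R))\to\bigl(\int f\bigr)\Psi(1)=\int f$ and $g(-R)\Psi(\Phi_s(-R))\to0$, which uses Step 1;
\item the integral term is $-\int f\,\Psi(\Phi_s)\,dx$;
\item combining gives $\int f\,(1-\Psi(1-F))\,dx=\int f\int_{1-F}^1\psi$.
\end{itemize}
The points requiring care are justifying integrability of $f$ so the limits exist, and the Stieltjes chain rule when $\Phi_s$ is merely continuous rather than $C^1$. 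The latter can be handled by approximating $\Psi$ by polynomials, or by using that the pushforward of $\mu_s$ under $\Phi_s$ is Lebesgue measure on $[0,1]$ when $\mu_s$ has no atoms. With these identities in hand, substituting into \eqref{muinfLimit} yields \eqref{cdfFormulation}.
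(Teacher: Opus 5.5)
Your proposal is correct and follows the paper's proof. You test \eqref{muinfLimit} with $H\star f$ and convert the linear terms via the Fubini duality $\langle\mu_t,H\star h\rangle=\langle F(\cdot,t),h\rangle$. You then handle both the $f\equiv 1$ mass computation and the nonlinear term through the fact that $\mu_s$ pushed forward by its continuous distribution function is Lebesgue measure; your integration by parts in Step 3 is just the paper's Fubini step written differently.

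Your Step 1 is actually cleaner than the paper's. The paper asserts $\langle\mu_t,\psi(H\star\mu_t)\rangle=\int_0^1\psi(u)\,du=1$ before unit mass is established, whereas you obtain $\Psi(m(s))$ and close with Lipschitz uniqueness. The only loose remark is your claim that $f_x(\cdot,t)\to 0$ at $-\infty$ is implicit in $H\star f\in\mathcal{BC}^{2,1}$. It is not literally implied (e.g.\ take $f(y)=\sin(y^3)/y^2$ near $-\infty$), but it holds trivially for the compactly supported test functions used in the weak formulation.
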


\begin{proof}
    \sloppy Firstly we show that any solution $\mu_t$ of (\ref{muinfLimit}) such that $\mu_t\ll\lambda$ and $\rho(\B{R})=1$ has $\langle \mu_t,1\rangle = 1$ for all $t\geq 0$. Substituting $f(x,s)\equiv 1\in \C{BC}^{2,1}(\B{R}\times [0,\infty),\B{R})$ into (\ref{muinfLimit}) and differentiating, we get that
    \begin{align*}\frac{d}{dt}\langle \mu_t, 1\rangle = r(t)\langle \mu_t, 1\rangle - r(t)\langle \mu_t, \psi((H\star \mu)(\cdot))\rangle.\end{align*} 
    Since $\mu_t$ is absolutely continuous with respect to the Lebesgue measure, therefore its cumulative distribution function $\tilde{F}(x,t):=(H\star \mu_t)(x)$ is continuous in $x$, and $\tilde{F}(x,t):\B{R}\to [0,1]$ is a measurable function and has a well-defined inverse $\tilde{F}^{-1}(x,t)$ with respect to $x$. Define by $\mu_s \circ \tilde{F}^{-1}(\cdot,s):=\mu_s(\tilde{F}^{-1}(\cdot,s))$ the pushforward measure of $\mu_s$ by $\tilde{F}(\cdot,s)$, which is the Lebesgue measure on $[0,1]$ (see Example 3.6.2 of \cite{bogachev}, for example). So by Theorem 3.6.1 of \cite{bogachev}, we have that:
    \begin{align*}
        \langle \mu_t, \psi((H\star \mu_t)(\cdot))\rangle = \int_{\B{R}} \psi((H\star \mu_t)(x))\mu_t(dx) = \int_{\tilde{F}(-\infty,t)}^{\tilde{F}(\infty,t)}\psi(u)(\mu_t\circ \tilde{F}^{-1}(\cdot,t))(du) = \int_0^1 \psi(u)du = 1
    \end{align*}

    Therefore $\langle \mu_t,1\rangle$ solves the ordinary differential equation $\frac{d}{dt}\langle \mu_t,1\rangle = r(t)\langle \mu_t,1\rangle - r(t)$ with $\langle \mu_0,1\rangle=1$, and therefore $\langle \mu_t,1\rangle \equiv 1$, as desired. Now recall that we have the relations $\langle \tilde{H}\star a,b\rangle =\langle a,H\star b\rangle$, $\langle H\star a, b\rangle = \langle a,\tilde{H}\star b\rangle$, and $\frac{\partial}{\partial x}(a\star b)(x)=(a'\star b)(x) = (a\star b')(x)$. So by Fubini's theorem, we have
    \begin{align*}
        \langle \mu_s, (H\star f(\cdot,s))(\cdot)r(s)\psi((H\star \mu_s)(\cdot))\rangle &= r(s)\int_{\B{R}}(H\star f(\cdot,s))(x)\psi((H\star \mu_s)(x))\mu_s(dx) \\ &=r(s)\int_{\B{R}}\int_{\B{R}}\is_{x-y\geq 0}f(y,s)dy\,\psi((H\star \mu_s)(x))\mu_s(dx) \\
        &= r(s)\int_{\B{R}}f(y,s)\int_{y}^{\infty}\psi(H\star \mu_s)(x)\mu_s(dx)dy 
    \end{align*}
    Again, using Theorem 3.6.1 of \cite{bogachev}, we have that:
    $$\int_y^\infty \psi((H\star \mu_s)(x))\mu_s(dx)=\int_{y}^\infty \psi(\tilde{F}(x,s))\mu_s(dx) = \int_{\tilde{F}(y,s)}^{\tilde{F}(\infty,s)}\psi(u)(\mu_s \circ \tilde{F}^{-1}(\cdot,s))(du)=\int_{\tilde{F}(y,s)}^1 \psi(u)du.$$
    Therefore
    $$\langle \mu_s, (H\star f(\cdot,s))(\cdot)r(s)\psi((H\star \mu_s)(\cdot))\rangle = r(s)\int_{\B{R}}f(y,s)\int_{\tilde{F}(y,s)}^1 \psi(u)du\,dy = r(s) \left\langle f(\cdot,s), \int_{1-F(\cdot,s)}^1 \psi(u)du\right\rangle,$$
    and hence for $f(x,t)$ such that $(H\star f(\cdot,t))(x)\in \C{BC}^{2,1}(\B{R}\times [0,\infty),\B{R})$:
    \begin{align*}
        \langle F(\cdot,t),f(\cdot,t)\rangle = \langle \tilde{H}\star &\mu_t,f(\cdot,t)\rangle = \langle \mu_t , H\star f(\cdot,t)\rangle \\
        = \langle \rho,H\star f(\cdot,0)&\rangle + \int_0^t \langle \mu_s, \frac12 \frac{\partial^2 (H\star f(\cdot,s))}{\partial x^2}+\frac{\partial (H\star f(\cdot,s))}{\partial s} + r(s)(H\star f(\cdot,s))\rangle \\ &- \langle \mu_s,(H\star f(\cdot,s))(\cdot)r(s)\psi(H\star\mu_s)(\cdot))\rangle ds \\
        = \langle\tilde{H}\star \rho,f(\cdot,0)\rangle & + \int_0^t \langle F(\cdot,s), \frac12 f_{xx}(\cdot,s)+f_t(\cdot,s) + r(s)f(\cdot,s)\rangle - r(s)\left\langle f(\cdot,s),\int_{1-F(\cdot,s)}^1 \psi(u)du\right\rangle ds.
    \end{align*}\end{proof}

Note that the functions $f$ with compact support such that $(H\star f(\cdot,t))(x)\in \C{BC}^{2,1}(\B{R}\times [0,\infty),\B{R})$ form a dense subset of $C^{2,1}_c(\B{R}\times [0,\infty),\B{R})$, so the above proposition shows that $F$ is in fact a weak solution of the PDE \eqref{mainResultPDE}. Next we show that the weak formulation (\ref{cdfFormulation}) has a unique solution.

\vspace{1em}

\begin{propn} \label{Uniqueness}
    Fix $T>0$ and $\rho\ll\lambda$. Then there is at most one function $F(x,t):\B{R}\times [0,T]\to\B{R}$ solving \eqref{cdfFormulation} for all $f$ such that $(H\star f(\cdot,t))(x)\in\C{BC}^{2,1}(\B{R}\times [0,\infty),\B{R})$, with $F(x,0)=\int_{x}^\infty \rho(dx)$, and such that $F(x,t)$ is continuous in $x$ for all $t\in [0,T]$
\end{propn}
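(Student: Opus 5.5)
We argue by a duality (backward heat kernel) and Gr\"onwall argument. This is the reason the weak formulation \eqref{cdfFormulation} was stated with time-dependent test functions.

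Suppose $F^1$ and $F^2$ both solve \eqref{cdfFormulation} with the same initial condition $\tilde{H}\star\rho$. Set $D:=F^1-F^2$. Subtracting the two identities, the initial terms cancel. Write $g(v):=\int_{1-v}^1\psi(u)du$; since $\psi$ is bounded, $g$ is Lipschitz with constant $\|\psi\|_\infty$. For every admissible $f$ we then get
\begin{align*}
\langle D(\cdot,t),f(\cdot,t)\rangle = \int_0^t \langle D(\cdot,s),\tfrac12 f_{xx}(\cdot,s)+f_t(\cdot,s)+r(s)f(\cdot,s)\rangle - r(s)\langle f(\cdot,s), g(F^1(\cdot,s))-g(F^2(\cdot,s))\rangle\, ds .
\end{align*}

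Fix $t\in(0,T]$ and a smooth compactly supported $\phi$ with $\int\phi=0$. The zero-mean condition makes $H\star\phi$ compactly supported, hence in $\C{BC}^{2,1}$. As test function take $f(x,s):=(p_{t-s}\star\phi)(x)$, the solution of the backward heat equation $\tfrac12 f_{xx}+f_t=0$ on $[0,t]$ with terminal value $\phi$. Then $H\star f(\cdot,s)=p_{t-s}\star(H\star\phi)$, which has bounded derivatives of all orders uniformly in $s\le t$, so $f$ is an admissible test function; one may extend it past $t$ by freezing it or by a smooth cutoff. The transport terms vanish, and with $\|r\|_\infty$ and Lipschitzness of $g$ we obtain
\[
|\langle D(\cdot,t),\phi\rangle| \le \|r\|_\infty(1+\|\psi\|_\infty)\int_0^t \int_{\B{R}} |D(x,s)|\,|(p_{t-s}\star\phi)(x)|\,dx\,ds .
\]

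To close the estimate we need to pass from $\phi$ to a norm of $D$. Since $0\le F^i\le1$, $D$ is bounded, and the natural choice is the sup norm. Removing the zero-mean restriction is the main obstacle: $|\langle D(\cdot,t),\phi\rangle|$ over zero-mean $\phi$ with $\|\phi\|_{L^1}\le1$ only controls the oscillation of $D(\cdot,t)$, and $\|p_{t-s}\star\phi\|_{L^1}\le\|\phi\|_{L^1}$. I would handle this as follows.
\begin{itemize}
\item Both $F^i(\cdot,t)$ tend to $0$ at $+\infty$. This holds because $F^i=\tilde H\star\mu^i_t$ with $\mu_t$ a probability measure by Proposition~\ref{cdfPdfEquiv}; for a general solution we would add this as part of the solution class, or work with $\phi$ whose mass is pushed off to $+\infty$.
\item Consequently $\sup_x|D(x,t)|\le \operatorname{osc} D(\cdot,t)$.
\item Continuity of $F^i$ in $x$ then gives $\sup_x|D(x,t)| = \sup\{|\langle D(\cdot,t),\phi\rangle|:\int\phi=0,\ \|\phi\|_{L^1}\le 2\}$, by letting $\phi$ approximate $\delta_x-\delta_y$ with $y\to+\infty$.
\end{itemize}
Writing $\Delta(s):=\sup_x|D(x,s)|$, the display yields $\Delta(t)\le 2\|r\|_\infty(1+\|\psi\|_\infty)\int_0^t\Delta(s)\,ds$. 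This requires only measurability of $\Delta$, which follows from continuity in $x$ by taking the sup over a countable dense set. Gr\"onwall's inequality with $\Delta(0)=0$ gives $\Delta\equiv0$ on $[0,T]$, hence $F^1=F^2$.

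If the boundary behaviour at $+\infty$ cannot be assumed, the fallback is to run the same argument in a weighted $L^1$ norm. Here the backward heat kernel again gives contraction and the reaction term is Lipschitz, so the only concern is the admissibility of the test functions. That concern is exactly why the proposition allows time-dependent $f$ with $H\star f\in\C{BC}^{2,1}$.
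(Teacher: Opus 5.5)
Your strategy is the paper's: test \eqref{cdfFormulation} against the backward heat flow $f(x,s)=(p_{t-s}\star\phi)(x)$, use that $v\mapsto\int_{1-v}^1\psi(u)du$ is $\|\psi\|_\infty$-Lipschitz, take a supremum over $\phi$ to recover a sup norm of $D$, and close with Gr\"onwall. The problem is the restriction $\int\phi=0$. It is self-imposed, and it is the sole source of your ``main obstacle''.

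Admissibility only requires $H\star f(\cdot,s)\in\C{BC}^{2,1}$, i.e.\ that $H\star f$ and its derivatives be continuous and \emph{bounded}. Vanishing at $\pm\infty$ is the extra condition defining $\C{BC}^{2,1}_0$, which is not the class used here. For any smooth compactly supported $\phi$, $H\star\phi$ is bounded by $\|\phi\|_{L^1}$; it simply tends to $\int\phi$ rather than $0$ at $+\infty$. So your own observation that $H\star f(\cdot,s)=p_{t-s}\star(H\star\phi)$ has bounded derivatives uniformly in $s\le t$ holds verbatim. Explicitly, $\partial_x(H\star f)=f$, $\partial_x^2(H\star f)=f_x$ and $\partial_s(H\star f)=-\tfrac12 f_x$.

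You may therefore take $\phi\ge0$ with $\int\phi=1$. For these, $\sup_\phi|\langle D(\cdot,t),\phi\rangle|=\sup_x|D(x,t)|$ by continuity of $D(\cdot,t)$ alone, and $\sup_x|D(x,t)|\le\|r\|_\infty(1+\|\psi\|_\infty)\int_0^t\sup_x|D(x,s)|\,ds$ follows directly. This is exactly the paper's proof; it puts the $rf$ term into the backward equation, which changes only the constant.

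As written, however, your argument only bounds $\operatorname{osc}D(\cdot,t)$. To convert this into $\sup_x|D(x,t)|$ you import the hypothesis $F^i(x,t)\to0$ as $x\to+\infty$. That hypothesis holds for the solutions produced by Proposition~\ref{cdfPdfEquiv}, but it is not assumed in the proposition, which assumes only continuity in $x$. So you prove a weaker statement, and the weighted-$L^1$ fallback is unnecessary.

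Your claimed identity $\sup_x|D(x,t)|=\sup\{|\langle D(\cdot,t),\phi\rangle|:\int\phi=0,\ \|\phi\|_{L^1}\le 2\}$ is also not right. The right-hand side equals $\operatorname{osc}D(\cdot,t)$, which can exceed $\sup_x|D(x,t)|$ by up to a factor $2$. Only ``$\le$'' holds under your decay assumption. That is the direction you use, so this slip is harmless.

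Finally, Gr\"onwall in the sup norm needs $\sup_x|D(x,s)|<\infty$. You take it from $0\le F^i\le 1$, and the paper uses it tacitly. It is not in the statement either, but it is an implicit assumption shared by both proofs rather than a difference between them.
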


\begin{proof}
    Suppose that $F_1,F_2:\B{R}\times [0,\infty)\to\B{R}$ both solve \eqref{cdfFormulation} and are both continuous in $x$ for all $t\in [0,T]$. Define their difference $D_t(x)=F_1(x,t)-F_2(x,t)$ which is also continuous in $x$. Now fix $t\geq 0$ and let $\varphi$ be a test function which is positive and smooth with compact support and total mass $\int_{\B{R}}\varphi(y)dy=1$. Then let $f$ be the solution of the backward heat equation $f_t+\frac12f_{xx}+rf=0$ with terminal condition $f(x,t)=\varphi(x)$. Therefore \eqref{cdfFormulation} yields
    %\begin{align*}\langle F_1(\cdot,t), \varphi\rangle = \langle \tilde{H}\star \rho, f(\cdot,0)\rangle - \int_0^t r(s)\Big\langle \int_{1-F_1(\cdot,s)}^1 \psi(u)du,f(\cdot,s)\Big\rangle ds,\end{align*}
    %and 
    %\begin{align*}\langle F_2(\cdot,t), \varphi\rangle = \langle \tilde{H}\star \rho, f(\cdot,0)\rangle - \int_0^t r(s)\Big\langle \int_{1-F_2(\cdot,s)}^1 \psi(u)du,f(\cdot,s)\Big\rangle ds.\end{align*}
    %Taking the difference of these two equations yields
    \begin{align} \label{differenceForUniquenessProof}
        \langle D_t,\varphi\rangle = \int_0^t r(s)\Big\langle \int_{1-F_2(\cdot,s)}^1 \psi(u)du - \int_{1-F_1(\cdot,s)}^1 \psi(u)du, f(\cdot,s)\Big\rangle ds.
    \end{align}
    Then as $\psi$ is bounded,
    $$\left|\int_{1-F_2(x,s)}^1 \psi(u)du-\int_{1-F_1(x,s)}^1\psi(u)du\right|\leq \|\psi\|_\infty|F_2(x,s)-F_1(x,s)|\leq \|\psi\|_\infty|D_s(x)|$$
    Therefore as $r$ is bounded and $\int_{\B{R}}f(x,s)dx\leq e^{\|r\|_\infty T}$ for all $s\in [0,T]$, therefore \eqref{differenceForUniquenessProof} gives that
    \begin{align*}
        |\langle D_t, \varphi\rangle | \leq \|\psi\|_\infty \|r\|_\infty \int_0^t \int_{\B{R}} |D_s(x)f(x,s)|dx\,ds \leq \|\psi\|_\infty \|r\|_\infty e^{\|r\|_\infty T}\int_0^t \|D_s\|_\infty ds.
    \end{align*}
    Now let us consider the supremum of $|\langle D_t,\varphi\rangle|$ over all smooth positive functions $\varphi$ with total mass $1$. Clearly $|\int_{\B{R}} D_t(x)\varphi(x)dx|\leq \|D_t\|_\infty \int_{\B{R}}\varphi(x)dx=\|D_t\|_\infty$, so the supremum is clearly bounded by $\|D_t\|_\infty$. On the other hand, since $D_t$ is continuous, choosing $\varphi$ as a smooth approximation to $\delta(\cdot-x^*)$, where $x^*$ is the point at which $D$'s supremum is achieved, we can clearly choose $\varphi$ such that $|\langle D_t,\varphi\rangle|$ is arbitrarily close to $\|D_t\|_\infty$. Thus $\langle D_t,\varphi\rangle = \|D_t\|_\infty$. Therefore we have:
    $$\|D_t\|_\infty \leq \|\psi\|_\infty \|r\|_\infty e^{\|r\|_\infty T}\int_0^t \|D_s\|_\infty ds,$$
    and so by Gr\"{o}nwall's inequality, it follows that $\|D_t\|_\infty\equiv 0$ for $t\in [0,T]$, and so $D_t(x)\equiv 0$. Thus \eqref{cdfFormulation} has a unique solution which is continuous in $x$.
\end{proof}

\begin{proof}(Of Theorem \ref{psirNGeneralLimit})
    By tightness (Proposition \ref{tightness}) the sequence of laws $(Q^N_T)_{N=1,2,\ldots}$ has subsequential limits. By Proposition \ref{detConvPropn}, if $Q^\infty_T$ is a subsequential limit and $\mu^\infty\sim Q^\infty_T$, then $(\mu_t^\infty)_{t\in [0,T]}$ is a weak solution to the PDE \eqref{muinfLimit}. Moreover, since $\rho=\mu^\infty_0\ll \lambda$, Proposition \ref{absContEverywhere} tells us that $\mu_t^\infty\ll \lambda$ for all $t\in [0,T]$. Therefore by Propositions \ref{cdfPdfEquiv} and \ref{Uniqueness}, $(\tilde{H}\star \mu_t^\infty)(x)$ is the unique solution to \eqref{cdfFormulation}; that is, the unique weak solution to \eqref{mainResultPDE}. Then as \eqref{mainResultPDE} has a unique classical solution (\S3, Theorem 1, \cite{kpp}) this coincides with the unique weak solution. Since, therefore, all subsequential limits of $(Q^N_T)_{N=1,2,\ldots}$ are the same (i.e. a Dirac mass at $u=-\frac{\partial}{\partial x}U(x,t)$, where $U$ is the unique classical solution to \eqref{mainResultPDE}), thus the sequence $(Q^N_T)_{N=1,2,\ldots}$ converges weakly to this limit. 
\end{proof}

Let us conclude this section with a remark about the necessity of the assumption \textit{\textbf{(A1)}}. Note that he $N$-BBM can be described as $(\psi,r,N)$-BBM process where $r(t)\equiv 1$ and $\psi(x)=\delta_0(x)$, since at each branching time, we select the leftmost particle to be deleted with probability $\int_0^{1/N}\delta_0(dx)=1$. Such a $\psi$ certainly fails the assumption, since $\psi$ is not even a function. And indeed, the boundedness of $\psi$ is crucially utilised in the proofs of Propositions \ref{weakRepPropn}, \ref{tightness}, and Lemmas \ref{absContEveryTime}, \ref{asContinuity}, and continuity is needed in the proof of Prposition \ref{detConvPropn}. Unfortunately, therefore, it is non-trivial to see how the methods here would, without non-trivial extension, reproduce the hydrodynamic limit result of \cite{hydroNBBM}.

\section{General reaction-diffusion equations} \label{generalFKPPSection}

So far we have shown that the $(\psi,r,N)$-BBM process has a hydrodynamic limit described by the unique solution to the reaction-diffusion equation
$$U_t = \frac12 U_{xx} + r(t)\left( U-\int_{1-U}^1 \psi(s)ds\right).$$

In particular, this means that given any function $G(x)$ with $G(0)=G(1)=0$ and $G'(x)\leq 1$, we can describe the solution of the PDE $U_t = \frac12 U_{xx} + G(U)$ as the hydrodynamic limit of a branching-selection particle system with selection function defined by $\psi(x)=1-G'(1-x)$. The author hopes that this introduces an intuitive way in which to understand a number of general reaction-diffusion equations, since $\psi$ can be thought of as describing the relative probability of an individual in the population being out-competed as a function of ranked fitness.

\paragraph{Example 1:} A ubiquitous choice of $G$ is the Fisher equation, when $G(x)=x(1-x)$, which has been studied since it was introduced by Fisher \cite{fisher}. Choosing $\psi(x)=2-2x$ gives us a particle system who's hydrodynamic limit is the equation $U_t = \frac12 U_{xx} + U - U^2$. Simply, this particle system is the system of $N$ particles, moving as Brownian motions, in which each particle branches at unit rate and simultaneously a particle is deleted with a probability which is directly proportional to its rank. %It has already been shown by Groisman, Jonckheere, and Martinez \cite{fkppScalingLimit} that solutions of this equation can also be understood as the hydrodynamic limit of a branching-selection particle system described as follows: $N$ particles move on $\B{R}$ as Brownian motions, and at constant rate, draw two particles from the population and move the leftmost particle to the location of the rightmost. 

\paragraph{Example 2:} The Allen-Cahn equation $U_t=\frac12 U_{xx}+U(1-U)(U-\theta)$, for $\theta\in (0,1)$, models the \textit{strong Allee effect}. In this case, to realise the PDE as the hydrodynamic limit of a rank-dependent branching selection system, we choose $\psi(x)=(2-\theta)(1-2x)+3x^2$

\paragraph{Example 3:} Another common example we give is $G(x)=x(1-x)(1+\rho x)$, with $\rho\in [0,1]$. In this case, we can realise $U_t = \frac12 U_{xx} + U(1-U)(1+\rho U)$ as the hydrodynamic limit of the rank-dependent branching selection system with selection function $\psi(x)=2+a-2x+4ax+3ax^2$.

\begin{figure}[h]
    \centering
    \begin{subfigure}{0.32\textwidth}
        \includegraphics[width=\linewidth]{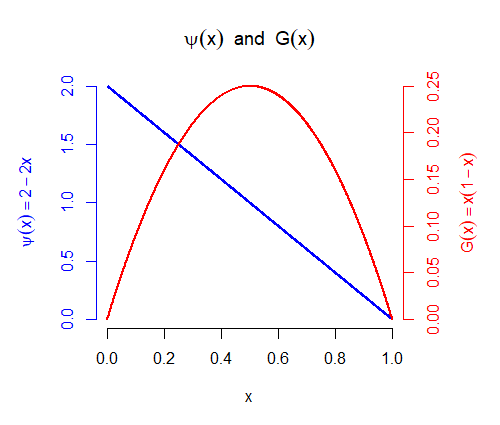}
        %\caption{First plot}
    \end{subfigure}
    \begin{subfigure}{0.32\textwidth}
        \includegraphics[width=\linewidth]{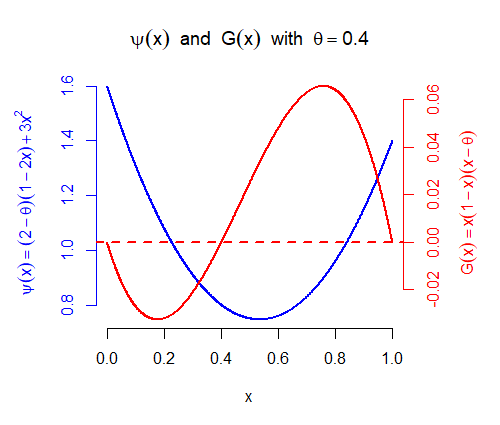}
        %\caption{Second plot}
    \end{subfigure}
    \begin{subfigure}{0.32\textwidth}
        \includegraphics[width=\linewidth]{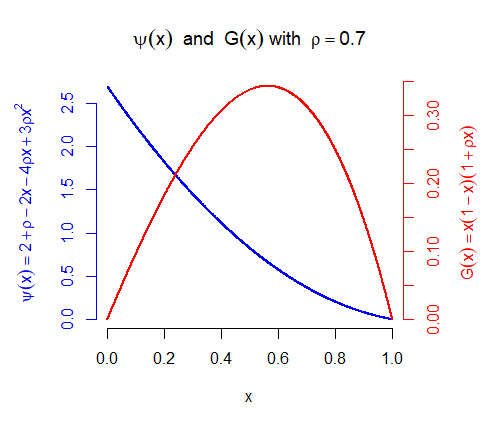}
        %\caption{Third plot}
    \end{subfigure}
    \caption{Functions $G$ and the corresponding selection functions $\psi$ for Examples 1, 2, \& 3.}
\end{figure}

\paragraph{Example 4:} Let us consider another way in which the PDE connection may be insightful. Consider a branching-selection particle system with rank-dependent selection such that particles of intermediate rank are deleted, whereas particles of extreme ranks (ie. close to $0$ or $N$) are never deleted. What should we expect the behaviour of such a particle system to be?

    Since the `middle' particles are deleted but both extremes do not get deleted, we expect that the system will split into two clouds of particles, one travelling to $-\infty$ and one to $+\infty$. But what proportion of particles is in each cloud? For a concrete example, consider $\psi(x)=\max\{0,K(x-0.8)(0.3-x)\}$, where $K$ is a scaling constant so that $\int_0^1 \psi(s)ds=1$. So the leftmost $30\%$ and rightmost $20\%$ of particles are never killed. We should expect there to be more particles in the left-moving cloud than in the right-moving cloud, but by how many? 

    Consider the corresponding reaction-diffusion equation with $G(x)=x-\int_{1-x}^1 \psi(s)ds$. This $G$ is a \textit{monostable} source term, with a single stable zero at the value $\alpha$ such that $\alpha = \int_{1-\alpha}^1 \psi(s)ds$. For this example, $\alpha\approx 0.425$. Therefore for any initial condition $U_0$ such that $\lim_{x\to-\infty}U_0(x)=1$ and $\lim_{x\to\infty}U_0(x)=0$, the solution to the PDE (\ref{mainResultPDE}) tends everywhere to $\alpha$ as $t\to\infty$, leaving us with $1-\alpha$ mass at $-\infty$ and $\alpha$ mass at $+\infty$. 

    \begin{figure}[h]
    \centering
        \includegraphics[width=0.6\linewidth]{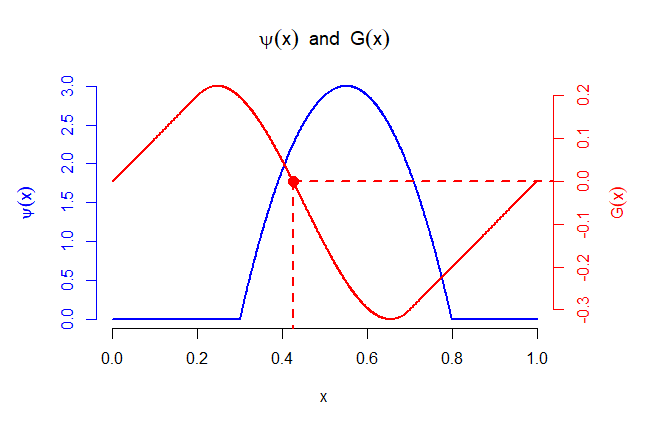}
        \caption{Selection function $\psi(x)=\max\{0,K(x-0.8)(0.3-x)\}$ and  corresponding $G(x)$.}
    \end{figure}

    Thus we expect roughly $42.5\%$ of the particles to converge to $\infty$ and $57.5\%$ of the particles to converge to $-\infty$. 
    
\section{Weak selection principle} \label{travellingSection}

In this section we prove Theorems \ref{asympSpeedTheorem} and \ref{partialWeakSelecTheorem}. First we prove Theorem \ref{asympSpeedTheorem} by constructing couplings between the $(\psi,1,N)$-BBM and the $N$-BBM process, whose asymptotic velocity is known (see Appendix of \cite{me1}). The idea for this proof is due to Paul de Lambert des Granges and produced here with his permission. Recall that we are trying to prove the following:

\vspace{1em}

\begin{theorem*}
    Let $(X^N(t))_{t\geq 0}$ be a $(\psi,r,N)$-BBM with any initial condition. Then under assumption \textit{(A2)}, the $(\psi,r,N)$-BBM has asymptotic velocity:
    $$\lim_{t\to\infty}\frac{\Theta^N_1(X^N(t))}{t}=\lim_{t\to\infty}\frac{\Theta^N_N(X^N(t))}{t} = v_N^\psi = \sqrt{2}-\frac{\pi^2}{\sqrt{2}(\log N)^2} + o\left(\frac{1}{(\log N)^2}\right).$$ 
\end{theorem*}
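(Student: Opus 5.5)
We sandwich the $(\psi,1,N)$-BBM between two $N$-BBM-type processes whose asymptotic velocities are both $\sqrt2-\frac{\pi^2}{\sqrt2(\log N)^2}+o((\log N)^{-2})$. Existence of $v_N^\psi$ itself we obtain from a standard subadditivity/regeneration argument: the process seen from its leftmost particle is a positive recurrent Markov process. This follows because $\psi>\epsilon$ on $[0,\epsilon]$ forces the cloud to stay of diameter $O(\log N)$, so $\Theta_1^N$ and $\Theta_N^N$ have the same linear growth. The speed estimate is then a two-sided bound.

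\emph{Upper bound.} Under (A2), killing never affects the top $\lfloor pN\rfloor$ ranks. We couple with a BBM-with-selection in which only these top particles are kept. Concretely, the $\lfloor pN\rfloor$ rightmost particles of the $(\psi,1,N)$-BBM are dominated by an $M$-BBM with $M=\lfloor pN\rfloor$ and branching rate boosted to $N/M=1/p$. Rather than use that crude bound, I would prefer the monotone coupling of \cite{me1}/Maillard-type: with the same Brownian motions and branching clocks, the $(\psi,1,N)$ configuration is stochastically dominated (in the order $\oop$ comparing ranked positions) by an $N$-BBM, because killing a non-leftmost particle only lowers the configuration relative to killing the leftmost. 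This monotonicity of $N$-BBM under the partial order gives $v_N^\psi\le v_N^{N\text{-BBM}}$.

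\emph{Lower bound.} This is the main obstacle. Particles in ranks $(\epsilon N, N]$ may be killed, which hurts the front. We use the fact that the rightmost $pN$ particles are never killed and that leftmost particles are killed at rate at least $\epsilon^2 N$. We construct a coupled $N'$-BBM with $N'=\lfloor pN\rfloor$ (or $N^{1-\delta}$), run on the particles of rank above $(1-p)N$. These particles branch freely, and when the population above a level exceeds $pN$ the excess is simply relabelled but not killed. Thus the top $pN$ particles of the $(\psi,1,N)$-BBM dominate an $N'$-BBM in the order $\oop$. Since $\log(pN)=\log N+O(1)$, the speed of the $pN$-BBM is $\sqrt2-\frac{\pi^2}{\sqrt2(\log N)^2}+o((\log N)^{-2})$, matching the upper bound. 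The delicate point is to justify the domination rigorously through branching/killing events: when a killed particle lies above the $N'$-BBM's lowest particle, we must check, event by event, that ranked positions stay ordered. We would do this by induction over jump times $t_m$, as in the construction of Section \ref{constructionSection}.

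Finally, combining the two inequalities with the known expansion of the $N$-BBM velocity (Appendix of \cite{me1}) yields the claimed asymptotic, and the equality of the limits for $\Theta^N_1$ and $\Theta^N_N$ follows from the tightness of $\Theta^N_N-\Theta^N_1$ in time.
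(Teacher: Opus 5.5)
Your two comparison couplings are essentially the paper's. The upper bound uses an $N$-BBM driven by the same rank-indexed Brownian motions and branching indices, but always deleting the leftmost particle. The lower bound uses a $\lfloor pN\rfloor$-BBM attached to the top $\lfloor pN\rfloor$ ranks, which under (A2) are never selected for deletion. A branching among them therefore just pushes their lowest member out of the top block, so your worry about killed particles lying above the coupled system's lowest particle is moot: only ranks matter. Keep $N'=\lfloor pN\rfloor$; the alternative $N'=N^{1-\delta}$ with fixed $\delta>0$ would change the second-order constant to $\pi^2/(\sqrt2(1-\delta)^2)$.

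\emph{The gap: equality of the two limits.} The step you treat as routine is $\lim_t\Theta^N_1(X^N(t))/t=\lim_t\Theta^N_N(X^N(t))/t$. It is indispensable, because the lower coupling only bounds ranks $N-\lfloor pN\rfloor+1,\dots,N$ from below and gives nothing for $\Theta^N_1$. Your justification is that $\psi>\epsilon$ on $[0,\epsilon]$ keeps the cloud of diameter $O(\log N)$, so the process seen from its leftmost particle is positive recurrent. This is not proved, and it is false in general under (A2).

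\emph{Why the diameter claim fails.} The bound you cite, deletions among the lowest $\epsilon N$ ranks at total rate at least $\epsilon^2 N$, is below their total branching rate $\epsilon N$, so it cannot confine them. Indeed, if $\psi(0)<1$ then $G'(1)=1-\psi(0)>0$ for the nonlinearity in \eqref{mainResultPDE}, and the state $U=1$ is unstable. A macroscopic fraction of particles then detaches and drifts left while the top ranks advance at speed near $\sqrt2$. So the diameter grows linearly over very long time windows; the paper discusses exactly this case, $\epsilon\le\psi\le\delta<1$ near $0$.

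\emph{What is needed instead.} You need a fixed-$N$ collapse event whose probability is bounded below uniformly in the configuration. The paper uses $E_k=E^1_k\cap E^2_k\cap E^3_k$:
\begin{itemize}
\item exactly $N-1$ branchings occur in $[k,k+1]$, all by the rightmost particle;
\item each is accompanied by deletion of the leftmost particle, which is possible because $\psi>0$ near $0$;
\item the Brownian increments are constrained so that old non-top particles stay below $\Theta^N_N(X^N(k))$ and newborns stay above it.
\end{itemize}
This event has probability $d(N)>0$ independently of $X^N(k)$, and on it all $N$ particles end in an interval of length $2$. The resulting regeneration times have i.i.d.\ geometric spacings. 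Together with a crude control of displacements between them, this gives $(\Theta^N_N-\Theta^N_1)/t\to0$. Since $d(N)$ is astronomically small, any positive recurrence comes from this Doeblin-type bound, not from a diameter estimate.
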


\begin{proof}
    We will prove the above proposition by coupling the $(\psi,1,N)$-BBM above and below to an $N$-BBM process. It is known (Theorem 21, \cite{me1}) that the $N$-BBM has asymptotic velocity $v_N = \sqrt{2}-\frac{\pi^2}{\sqrt{2}(\log N)^2} + o\Big(\frac{1}{(\log N)^2}\Big)$.

    Now let $(X(t))_{t\geq 0}=(\Xi(\rho^N,\C{N},\C{W},\C{I},\C{J},t))_{t\geq 0}$ be a $(\psi,r,N)$-BBM process constructed as in Section \ref{constructionSection}. We will couple an upper-bounding $N$-BBM process $(X^+(t))_{t\geq 0}$ to $(X(t))_{t\geq 0}$ as follows. Initially, $X^+(0)=\rho^N=X(0)$. As in the construction of the $(\psi,r,N)$-BBM, let $t_0=0$, and let $t_1,t_2,\ldots$ be the times of discontinuity of the Poisson process $\C{N}$ so that at time $t_k$, the $J_k$\textsuperscript{th} leftmost particle of $X^N$ moves to the location of the $I_k$\textsuperscript{th} leftmost. Suppose for induction that $X^+$ is constructed up to time $t_{m-1}$ and that $X(t)\oop X^+(t)$ for $t\leq t_{m-1}$. Then on $(t_{m-1},t_m)$, drive the particle of $X^+(t_{m-1})$ which is at location $\Theta^N_i(X^+(t_{m-1}))$ by the Brownian motion $(W_i(t))_{t\geq t_{m-1}}$. Since $W_i$ is also the Brownian motion which drives the particle of $X$ at location $\Theta^N_i(X(t_{m-1}))$, clearly $X(t_{m-1})\oop X^+(t_{m-1})$ implies that $X(t)\oop X^+(t)$ for $t\in (t_{m-1},t_m)$. Now at the stopping time $t_m$, we branch the $I_m$\textsuperscript{th} leftmost particle of $X^+(t_m-)$ and simultaneously delete the leftmost particle of $X^+(t_m-)$. 

    For a vector $\underline{v}=(v_1,\ldots,v_N)\in \B{R}^N$ and $k,\ell\in [N]$, define the function $L:\B{R}^N\times \B{N}\times \B{N}\to\B{R}^N$ to be the function which deletes the $k$\textsuperscript{th} element and duplicates the $\ell$\textsuperscript{th} element of the vector $\underline{v}$. That is, $$L(\underline{v},k,\ell)=(v_1,\ldots,v_{k-1},v_{k+1},\ldots,v_{\ell-1},v_{\ell},v_{\ell},v_{\ell+1},\ldots,v_N),$$
    where $k$ is not necessarily smaller than $\ell$. With this definition, we have that $X(t_m)=L(X(t_m-),J_m,I_m)$ and we define $X^+(t_m)=L(X(t_m-),1,I_m)$. Then observe that $\underline{u}\oop \underline{v}$ and $j'\leq j$ implies that $L(\underline{u},j,i)\oop L(\underline{v},j',i)$. In layman's terms, \textit{deleting a particle which sits more to the left succeeds in making the whole system sit more to the right}. Therefore $X(t_m)\oop X^+(t_m)$, and hence by induction $X(t)\oop X^+(t)$ for all $t\geq 0$. Thus:
    $$\lim_{t\to\infty}\frac{\Theta^N_1(X(t))}{t} \leq \lim_{t\to\infty}\frac{\Theta^N_1(X^+(t))}{t}=\sqrt{2}-\frac{\pi^2}{\sqrt{2}(\log N)^2} + o\Big(\frac{1}{(\log N)^2}\Big).$$

    Next we construct a lower bound. Consider constructing a $N$-BBM with $\lfloor pN\rfloor$ particles, $(X^-(t))_{t\geq 0}$, as follows. 
    Let $X^-_i(t)$ denote the location of the $i$\textsuperscript{th} leftmost particle of $X^-(t)$, and initially define $X^-_{\lfloor pN\rfloor - i}(0)=X_{N-i}(0)$ for $i \in {\{1,2,\ldots,\lfloor pN\rfloor\}}$. So the $\lfloor pN\rfloor$ particles of $X^-(0)$ have the same locations as the $N$ rightmost particles of $X(0)=\rho^N$. Let $R(X(t),\lfloor pN\rfloor)$ denote be the vector of positions of the rightmost $\lfloor pN\rfloor$ particles of $X(t)$. Similarly to before, assume for induction that $X^-$ is constructed up to time $t_{m-1}$ and that $X^-(t)\oop R(X(t),\lfloor pN\rfloor)$ for $t\leq t_{m-1}$. Then construct $X^-(t)$ for $t\in (t_{m-1},t_m]$ as follows. On the interval $(t_{m-1},t_m)$, drive the particle of $X^-(t_{m-1})$ which is at location $\Theta^{\lfloor pN\rfloor}_{\lfloor pN\rfloor - i}(X^-(t_{m-1})$ at time $t_{m-1}$ by the Brownian motion $W_{N-i}$; that is, we couple the $\lfloor pN\rfloor$ particles of $X^-$ to the rightmost $\lfloor pN\rfloor$ particles of $X$. Thus $X^-(t_{m-1})\oop R(X(t_{m-1}),\lfloor pN\rfloor)$ implies that $X^-(t)\oop R(X(t),\lfloor pN\rfloor)$ for $t\in (t_{m-1},t_m)$. Then at the time $t_m$, the $J_m$\textsuperscript{th} leftmost particle of $X(t_m-)$ moves to the location of the $I_m$\textsuperscript{th} leftmost particle. If $I_m\geq N-\lfloor pN\rfloor + 1$, then we branch the $(I_m-N+\lfloor pN\rfloor)$\textsuperscript{th} leftmost particle of $X^-(t_{m-1}-)$ and simultaneously delete the leftmost particle in the system; otherwise, we do nothing. By assumption \textit{(A2)}, $\psi(x)=0$ on $[1-p,1]$, so $J_m<N-\lfloor pN\rfloor$, which is to say that we never delete one of the $\lfloor pN\rfloor$ rightmost particles of $X$ at a branching/deletion time. 
    
    Clearly if we do nothing, then $X^-(t_m)=X^-(t_{m}-)\oop R(X(t_{m}-),\lfloor pN\rfloor)= R(X(t_{m}),\lfloor pN\rfloor)$. Consider on the other hand the case that a particle of $X^-$ branches at time $t_m$. Again, by assumption \textit{(A2)}, $\psi(x)=0$ on $[1-p,1]$, thus the particle of $X(t_{m})$ which is deleted is not among the $\lfloor pN\rfloor$ rightmost. Therefore $R(X(t_m),\lfloor pN\rfloor )$ consists of the $\lfloor pN\rfloor-1$ rightmost particles of $X(t_m-)$, with the $(I_m-N+\lfloor pN\rfloor)$\textsuperscript{th} of those duplicated. Since $X^-(t_m)$ similarly consists of the $\lfloor pN\rfloor-1$ rightmost particles of $X^-(t_m-)$, with the $(I_m-N+\lfloor pN\rfloor)$\textsuperscript{th} duplicated, we can conclude that $X^-(t_m-)\oop R(X(t_m-),\lfloor pN\rfloor )$ implies $X^-(t_m)\oop R(X(t_m),\lfloor pN\rfloor )$. Therefore by induction it holds that $X^-(t)\oop R(X(t),\lfloor pN\rfloor)$ for all $t\geq 0$. 

    Now observe that, by our construction, the process $(X^-(t))_{t\geq 0}$ is simply an $N$-BBM process with $\lfloor pN\rfloor$ particles, therefore
    \begin{align*}
       \lim_{t\to\infty}\frac{\Theta^N_N(X(t))}{t}\geq \lim_{t\to\infty}\frac{\Theta^N_N(X^-(t))}{t} &= \sqrt{2}-\frac{\pi^2}{\sqrt{2}(\log \lfloor pN\rfloor)^2}+o\Big(\frac{1}{(\log \lfloor pN\rfloor)^2}\Big) \\
        &= \sqrt{2}-\frac{\pi^2}{\sqrt{2}(\log N)^2}+o\left(\frac{1}{(\log N)^2}\right).
    \end{align*}
    It remains to show that the limits $\lim_{t\to\infty}X_N(t)/t$ and $\lim_{t\to\infty}X_1(t)/t$ are equal. To do this, we consider the following events which we can think of as `regeneration events'. For $k\in \B{N}$, define
    $$E^1_k:=\{\C{N}(k+1)-\C{N}(k)=N-1\text{ and }I_{\C{N}(k)+1}=I_{\C{N}(k)+2}=\cdots=I_{\C{N}(k+1)}=N\}.$$
    This is the event that in the time interval $[k,k+1]$, there are exactly $N-1$ branching times, $\tau_1,\tau_2,\ldots,\tau_{N-1}$, at all of which the rightmost particle branches. We can explicitly calculate $\B{P}(E_k^1)=e^{-N}/(N-1)!$. Define $\tau_0=k$ and $\tau_N=k+1$, so that $k=:\tau_0<\tau_1<\cdots<\tau_{N-1}<\tau_N=k+1$. Recall from our construction of the $(\psi,r,N)$-BBM that, in between branching events, the particle which was the $j$\textsuperscript{th} leftmost at the most recent branching event is driven by the Brownian motion $W_j$. Then define
    \begin{align*}E^2_k:=\{W_j(\tau_{i})&\in [W_j(\tau_{i-1})-1/N,W_j(\tau_{i-1})]\text{ for }i\leq N, j\leq N-1\}\\&\cap\{W_N(\tau_i)\in[\max\{W_N(\tau_0)+1,W_N(\tau_{i-1})\},W_N(\tau_0)+2]\text{ for }i\leq N\}\end{align*}
    Note this event is defined so that $\B{P}(E^2_k|E^1_k)=c(N)>0$ for some $c(N)$ independently of $k$ and the configuration of $X(k)$. Finally, let $$E^3_k:=\{J_{\C{N}(k)+1}=J_{\C{N}(k)+2}=\ldots=1\}$$ be the event that all branching events in $[k,k+1]$, the leftmost particle is deleted. Since $\psi(x)>0$ for all $[0,\epsilon]$, this happens with positive probability; we can explicitly calculate $\B{P}(E^3_k|E^1_k)=(\int_0^{1/N}\psi(u)du)^{N-1}$. 
    
    Now define $E_k:=E_k^1\cap E_k^2\cap E_k^3$. Therefore $\B{P}(E^1_k\cap E^2_k \cap E_k^3)=d(N)>0$ for some $d(N)$ independently of $k$ and the configuration of $X(k)$. Now let us consider the configuration of particles of $X$ at time $k+1$ on the event $E_k$. Since $W_j(\tau_i)\leq W_j(\tau_{i-1})$ for all $j=1,2,\ldots,N-1$, this means that, at time $\tau_\ell$, every particle which has not yet been killed has a location left of $\Theta^N_N(X^N(k))$. Moreover, since $W_j(\tau_i)\geq W_j(\tau_{i-1})-1/N$ and $W_N(\tau_i)\geq W_N(\tau_0)+1$, this means that every particle which has been born during $[k,k+1]$ has a location to the right of $\Theta^N_N(X^N(k))$. Therefore, on the events $E_k^1$, $E_k^2$, and $E_k^3$, the $N-1$ branching times which occur on $[k,k+1]$ cause every particle to become an offspring of the rightmost particle and have location in $[\Theta^N_N(X^N(k)),\Theta^N_N(X^N(k))+2]$ by time $k+1$.
    
    So define $T_0=0$ and subsequently $T_i:=\inf\{k>T_{i-1}:k\in \B{N}\text{ and }E_k\text{ occurs}\}$. Since $E_k$ occurs for each $k$ independently, with positive probability, and independently of the initial configuration, thus $T_{i+1}-T_{i}$ are i.i.d. with $\B{E}[T_{i+1}-T_{i}]=1/d(N)<\infty$ for $i=1,2,\ldots$. Moreover, $T_i\geq i$, so $T_i\to\infty$ as $i\to\infty$ and $|\Theta^N_N(X(T_i))-\Theta^N_1(X(T_i))|\leq 2$ for each $i=1,2,\ldots$. Therefore:
    $$\lim_{t\to\infty}\left|\frac{\Theta^N_N(X(t))}{t}-\frac{\Theta^N_1(X(t))}{t}\right|=\lim_{i\to\infty}\left|\frac{\Theta^N_N(X(T_i))-\Theta^N_1(X(T_i))}{T_i}\right|=0.$$

    Therefore we can conclude that there is an asymptotic velocity $v_N:=\lim_{t\to\infty}X_1(t)/t=\lim_{t\to\infty}X_N(t)/t$. Moreover, by sandwiching:
    $$\sqrt{2}-\frac{\pi^2}{\sqrt{2}(\log N)^2}+o\left(\frac{1}{(\log N)^2}\right)\leq \lim_{t\to\infty}\frac{\Theta^N_N(X(t))}{t}=v_N^\psi=\lim_{t\to\infty}\frac{\Theta^N_1(X(t))}{t}\leq \sqrt{2}-\frac{\pi^2}{\sqrt{2}(\log N)^2}+o\left(\frac{1}{(\log N)^2}\right),$$
    thus proving the claim.
\end{proof}

Within the assumption \textit{(A2)}, the requirement that $\psi(x)=0$ on some left neighbourhood of 1 ensures that the process still propagates with speed close to $\sqrt{2}$. Intuitively, if we were to kill the rightmost particle at a strictly positive rate, we would no longer expect the front to move with speed close to $\sqrt{2}$, as it does in branching Brownian motion or in the $N$-BBM. In our argument, this hypothesis is used precisely when comparing the system with an $N$-BBM. %We in fact expect that, if $\psi(x)\to 0$ sufficiently quickly as $x\to 1$ then the asymptotic velocity should still be close to $\sqrt{2}$, but our coupling method does not apply in that regime. 
Conversely, we need $\psi$ to be strictly positive on some neighbourhood of $0$ to ensure that the entire particle cloud shares the same asymptotic velocity. If we never killed the leftmost particle, we would expect it to drift to $-\infty$ while the rightmost particle drifts to $\infty$, so that no asymptotic speed exists. This latter condition is used to show that $\lim_{t\to\infty}\Theta^N_1(X(t))/t$ and $\lim_{t\to\infty}\Theta^N_N(X(t))/t$ coincide.

Although it is not possible to prove with the coupling method above, we actually believe that this asymptotic speed result still holds under weaker conditions on $\psi$:

\vspace{1em} 

\begin{conj}\label{speedConjecture} Let $\psi:[0,1]\to[0,\infty)$ be such that $\psi(x)>\psi$ for $x\in [0,\epsilon]$ for some $\epsilon>0$ and $\psi(1-h)=o(h)$ as $h\to 0$. Then the $(\psi,1,N)$-BBM has asymptotic velocity $v_N^\psi=\sqrt{2}-\frac{\pi^2}{\sqrt{2}(\log N)^2}+o\left(\frac{1}{(\log N)^2}\right)$ as $N\to\infty$. 
\end{conj}

It remains to study the travelling wave speed of the corresponding PDE $U_t = \frac12 U_{xx} + U - \int_{1-U}^1 \psi(s)ds$ under assumptions \textit{(A1)} and \textit{(A2)}.

It is a well known result (see \cite{hamelNadin}, for example) that if the function $G$ satisfies the conditions $G(0)=G(1)=0$, $G(x)>0$ for all $x\in (0,1)$, and $G'(x)\leq G'(0)$ for all $x\in [0,1]$, then $U_t = \frac12 U_{xx}+G(U)$ has a travelling wave solution $U(x,t)=\omega_c(x-ct)$ with $\lim_{z\to-\infty}\omega_c(z)=1$ and $\lim_{z\to\infty}\omega_c(z)=0$ for all speeds $c\geq \sqrt{2G'(0)}$. Therefore we can state the following:

\vspace{1em}

\begin{propn}
    Let $\psi$ be such that $\int_{1-x}^1 \psi(s)ds < x$ for all $x\in [0,1]$ and $\psi(x)=0$ for $x\in [1-p,1]$ for some $p\in (0,1)$. Then the PDE (\ref{mainResultPDE}) has a travelling wave solution for all speeds $c\geq \sqrt{2}$. 
\end{propn}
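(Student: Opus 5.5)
The plan is to reduce the statement directly to the classical F-KPP travelling wave result quoted just above (e.g.\ \cite{hamelNadin}). Taking $r\equiv 1$ as in the weak selection setting, the PDE \eqref{mainResultPDE} becomes $U_t=\frac12U_{xx}+G(U)$ with
$$G(x):=x-\int_{1-x}^1\psi(s)\,ds,\qquad x\in[0,1].$$
It therefore suffices to check three things: that $G(0)=G(1)=0$, that $G>0$ on $(0,1)$, and that $G'(x)\leq G'(0)$ on $[0,1]$. Once these hold, the quoted result gives a travelling wave $\omega_c(x-ct)$ with $\omega_c(-\infty)=1$ and $\omega_c(+\infty)=0$ for every $c\geq\sqrt{2G'(0)}$. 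The proof then finishes by computing $G'(0)=1$.

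The steps, in order, are as follows.

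\textbf{Step 1 (boundary values).} We have $G(0)=0-\int_1^1\psi=0$, and $G(1)=1-\int_0^1\psi(s)\,ds=0$ by the normalisation in (A1).

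\textbf{Step 2 (positivity).} $G(x)>0$ for $x\in(0,1)$ is exactly the hypothesis $\int_{1-x}^1\psi(s)\,ds<x$. This inequality can only be strict on the open interval $(0,1)$, since both sides agree at $x=0$ and $x=1$, so I will read the hypothesis that way.

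\textbf{Step 3 (regularity and the derivative condition).} Since $\psi$ is continuous, the fundamental theorem of calculus shows that $G\in C^1([0,1])$ with $G'(x)=1-\psi(1-x)$. Because $\psi(x)=0$ on $[1-p,1]$, we get $G'(x)=1$ for $x\in[0,p]$, and in particular $G'(0)=1$. Because $\psi\geq 0$, we get $G'(x)\leq 1=G'(0)$ on all of $[0,1]$, which is the KPP-type condition. Continuity of $\psi$ also makes $G$ Lipschitz, which the existence theory behind the quoted result requires.

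\textbf{Step 4 (conclusion).} Applying the quoted result gives travelling waves for all $c\geq\sqrt{2G'(0)}=\sqrt2$.

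I expect no serious obstacle, since the statement is essentially a verification. The only points needing care are the following.
\begin{itemize}
\item \emph{Endpoint reading of Step 2.} The strict inequality must be interpreted on $(0,1)$, as explained above.
\item \emph{Matching conventions.} The cited theorem must be stated for the diffusion coefficient $\frac12$, which is what produces the threshold $\sqrt{2G'(0)}$ rather than $2\sqrt{G'(0)}$.
\item \emph{Role of the vanishing of $\psi$ near $1$.} The condition $\psi=0$ on $[1-p,1]$ is what makes $G'(0)$ equal to the global maximum $1$ of $G'$. If it failed, $G'(0)=1-\psi(1)<1$ could fall below $\sup G'$, and the KPP hypothesis, hence the identification of the minimal speed with the linearised speed, would break down. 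Only $\psi(1)=0$ is actually needed here.
\end{itemize}
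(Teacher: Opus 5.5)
Your proposal is correct and matches the paper's proof: with $r\equiv 1$, define $G(x)=x-\int_{1-x}^1\psi(s)\,ds$, check $G(0)=G(1)=0$, $G>0$ on $(0,1)$ and $G'(x)=1-\psi(1-x)\le 1=G'(0)$, and then invoke the classical KPP travelling-wave result. Your side remarks are accurate refinements that the paper leaves implicit: the strict inequality can only hold on the open interval $(0,1)$, and only $\psi(1)=0$ is actually needed for $G'\le G'(0)$.
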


\begin{proof}
    Defining $G(x):=x-\int_{1-x}^1 \psi(s)ds$, it immediately follows that $G(0)=0$, and $\int_0^1 \psi(s)ds = 1$ gives that $G(1)=0$. Moreover, since $\psi$ is positive and $\psi(x)=0$ for $x\in [1-p,1]$, $G'(x)=1-\psi(1-x)\leq 1=G'(0)$ for all $x\in [0,1]$. Finally, the condition $\int_{1-x}^1 \psi(s)ds < x$ ensures that $G(x)>0$ for $x\in (0,1)$. 
\end{proof}

The condition that $\psi$ is $0$ near $1$ is equivalent to $G'$ being maximal $1$ at $0$ and hence that the minimal travelling wave speed is $\sqrt{2G'(0)}=\sqrt{2}$. Intuitively, if $\psi$ were bounded below by $\delta\in (0,1)$ on $[0,1]$, with $\psi(x)=\delta$ for $x\in [1-p,1]$, the PDE would have minimal travelling wave speed $\sqrt{2(1-\delta)}$. In terms of the $N$-particle system, if we are killing the rightmost particle at a positive rate $N\delta$, this implies that the cloud of particles would then have the slower speed $\sqrt{2(1-\delta)}<\sqrt{2}$. We therefore conjecture the following:

\vspace{1em}

\begin{conj}
    Let $\psi$ be a selection function such that $\psi(x)\geq \delta$ for all $x\in [0,1]$ and $\psi(x)=\delta$ for all $x\in [1-p,1]$ for some $p \in (0,1)$. Then the $(\psi,1,N)$-BBM has asymptotic velocity:
    $$v^\psi_N=\sqrt{2(1-\delta)}-\frac{c}{(\log N)^2} + o\left(\frac{1}{(\log N)^2}\right)$$
    for some positive constant $c$.
\end{conj}

On the other hand, if $G(x)$ is negative on $(0,1)$, we may not have a travelling wave at all. We recall the following result:

\vspace{1em}

\begin{theorem*}[Ch. 1, \S 3, Theorem 3.14, \cite{vvv}]
    Consider the PDE $U_t = U_{xx} + G(U)$, with $G(0)=G(1)=0$ and $G'(0),G'(1)>0$. Then there is no travelling wave solution $U(x,t)=w(x-ct)$ of the PDE with $\lim_{x\to-\infty}w(x)=1$ and $\lim_{x\to\infty}w(x)=0$. 
\end{theorem*}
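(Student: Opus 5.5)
The plan is a phase-plane argument. Substituting $U(x,t)=w(x-ct)$ into $U_t=U_{xx}+G(U)$ gives the ODE $w''+cw'+G(w)=0$, written as the planar system
$$w'=v,\qquad v'=-cv-G(w).$$
A travelling wave as in the statement is then a non-constant orbit with $(w,v)\to(1,0)$ as $z\to-\infty$ and $(w,v)\to(0,0)$ as $z\to+\infty$.

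The first step is to justify that $v=w'\to0$ at both ends, so that the endpoints really are the equilibria $(1,0)$ and $(0,0)$. Since $w$ is bounded with limits at $\pm\infty$ and $w''=-cw'-G(w)$ is bounded, $w'$ is uniformly continuous. A Barbalat-type argument then forces $w'\to0$. (Alternatively, this can simply be taken as part of the definition of a travelling wave.)

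Next, linearise at each equilibrium $(a,0)$, $a\in\{0,1\}$. The characteristic equation is $\lambda^2+c\lambda+G'(a)=0$. The product of the roots is $G'(a)>0$ and their sum is $-c$, so both roots have real part of the same sign as $-c$. This gives three cases.

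\textbf{Case $c>0$.} Both equilibria are hyperbolic sinks. The orbit must leave $(1,0)$ as $z\to-\infty$, i.e.\ lie in its unstable manifold. By Hartman--Grobman (or directly via a quadratic Lyapunov function for the linearisation), an asymptotically stable hyperbolic equilibrium has trivial unstable manifold. Concretely, there is a neighbourhood in which a Lyapunov function strictly decreases along every non-constant orbit, so an orbit converging to the point in backward time would have to increase it, a contradiction. Hence the orbit is constant, which is impossible because $w(\pm\infty)$ differ.

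\textbf{Case $c<0$.} Symmetrically, $(0,0)$ is a hyperbolic source, and no non-constant orbit converges to it as $z\to+\infty$. Equivalently, reverse $z$ and reduce to the previous case.

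\textbf{Case $c=0$.} This is the main obstacle, since both equilibria are linear centres and hyperbolicity is lost. Here I would use the conserved energy
$$E(w,v)=\tfrac12v^2+V(w),\qquad V(w)=\int_0^wG(s)\,ds.$$
Along the orbit $E$ is constant. Letting $z\to+\infty$ shows $E\equiv E(0,0)=0$. Because $V(0)=V'(0)=0$ and $V''(0)=G'(0)>0$, the origin is a strict local minimum of $V$. Therefore $E>0$ on a punctured neighbourhood of $(0,0)$, and the level set $\{E=0\}$ meets that neighbourhood only at the origin. An orbit lying in $\{E=0\}$ and converging to $(0,0)$ must thus eventually equal $(0,0)$. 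By uniqueness for the ODE it is then identically $(0,0)$, contradicting $w(-\infty)=1$. (Using $(1,0)$ and $G'(1)>0$ gives the same conclusion.)

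Combining the three cases, no speed $c$ admits a travelling wave connecting $1$ to $0$, which proves the statement.
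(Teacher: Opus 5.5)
Your proof is correct. The paper gives no proof of this statement: it quotes the result from \cite{vvv} and uses it only to show that no weak selection principle holds in general. So your phase-plane argument is a self-contained justification of something the paper takes on citation, not an alternative to an argument in the text.

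The key observation is that $G'(a)>0$ makes the product of the roots of $\lambda^2+c\lambda+G'(a)=0$ positive. Hence both endpoints are sinks when $c>0$, both are sources when $c<0$, and both are strict minima of the conserved energy $\tfrac12 v^2+V(w)$ when $c=0$. That is exactly what rules out a heteroclinic connection. Your quadratic-Lyapunov treatment of the hyperbolic cases needs only differentiability of $G$ at $0$ and $1$, which matches the hypotheses.

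One small point in your first step: you assert that $w''=-cw'-G(w)$ is bounded, but this presupposes that $w'$ is bounded, which does not follow directly from boundedness of $w$. It is true, and easily checked. From $(e^{cz}w')'=-e^{cz}G(w)$, suppose $|w'(z_1)|=M$. Work on $[z_1,z_1+1]$ if $c\ge 0$, or on $[z_1-1,z_1]$ if $c<0$. On that interval $w'$ keeps its sign and satisfies $|w'|\ge e^{-|c|}M-\sup|G(w)|$. So $w$ moves by at least that amount, which forces $M\le e^{|c|}\bigl(2\sup|w|+\sup|G(w)|\bigr)$. With $w'$ bounded, your Barbalat argument goes through.

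Your appeals to ODE uniqueness tacitly assume $G$ is locally Lipschitz, which is standard. You can avoid even that. In the case $c=0$, the set $\{z:(w,v)(z)=(0,0)\}$ is closed and non-empty. It is also open, because $E\equiv 0$ and the origin is isolated in $\{E=0\}$. Hence it is all of $\mathbb{R}$.
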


Therefore we can conclude that \textbf{no weak selection principle holds in general}, since there exist selection functions $\psi$ such that the $(\psi,1,N)$-BBM has asymptotic speed converging to $\sqrt{2}$ but the PDE (\ref{mainResultPDE}) does not have travelling wave solutions. 

We may have in mind the case when $\epsilon \leq \psi(s) \leq \delta < 1$ for $s\in [0,\epsilon]$ and $\psi(x)=0$ for $x\in [1-p,1]$ so that $\psi$ satisfies assumption \textit{(A2)}, and thus by Theorem \ref{asympSpeedTheorem} the $(\psi,1,N)$-BBM has an asymptotic velocity, but the PDE has no travelling wave solutions. The intuition behind this disparity is as follows. If $\psi(s)<1$ for $s\in [0,\epsilon]$, then in the $N$-particle system, the leftmost $\lfloor \epsilon N\rfloor$ particles branch more frequently than they are killed; rate $\epsilon$ versus rate $\delta \epsilon$. When $N$ is finite, this is controlled by the fact that we have rare events in which, for example, all particles come within distance $1$ of one another. However the rarity of these events is such that, as $N\to\infty$, this is in some sense `insufficient to stop some of the mass diffusing to $-\infty$ in the PDE'.

Whilst no weak selection principle holds in general, we may still prove that the solution of the PDE spreads to the right with speed at least $\sqrt{2}$ in a sense which we will now define. For more complex equations, we may see more complex propagating fronts, and there exists `terrace solutions' in which several travelling fronts of different speeds may be `stacked on top of each other' (see \cite{ducrotTerrace}, \cite{gilettiTerraces}, for example).

\vspace{1em}

\begin{defn}
    Consider the PDE $U_t = \frac12 U_{xx} + G(U)$ with initial condition $U(x,0)=U_0(x)$ such that $U_0(x)\to 1$ as $x\to-\infty$ and $U_0(x)\to 0$ as $x\to\infty$. We say that the solution $U(x,t)$ \textit{spreads up to (the value) $u^\star$ at speed $c^\star$} if:
    \begin{enumerate}
        \item $\lim_{t\to\infty}\inf_{x\leq ct} U(x,t)\geq u^\star$ for all $c<c^\star$,
        \item $\lim_{t\to\infty}\sup_{x\geq ct} U(x,t)=0$ for all $c>c^\star$.
    \end{enumerate}
    More generally, we may say that the solution \textit{spreads up to (the value) $u^\star$ with speed at least $c^\star$} if just condition 1. holds.
\end{defn}

This definition is a slight generalisation of the definition given by Hamel and Nadin in \cite{hamelNadin}, allowing for the possibility that the solution doesn't spread up to the value $1$, but may spread up to a strictly smaller value $u^*\in (0,1)$. Let us observe also the somewhat trivial fact that, as $U$ is monotonic decreasing, if $\lim_{t\to\infty}\inf_{x\leq ct}U(x,t)\geq a$, then also $\lim_{t\to\infty}\inf_{x\leq c't}U(x,t)\geq a$ for all $c'<c$. 

Following \cite{hamelNadin}, we also introduce the following definition of a \textit{front-like} initial condition $U_0(x)$:

\vspace{1em}

\begin{defn}[\cite{hamelNadin}, Definition 1.1]
    We say that a function $U_0\in L^\infty(\B{R})$ is \textit{front-like} if $0\leq U_0(x)\leq 1$ for almost all $x\in \B{R}$, $\lim_{x\to\infty}\sup_{y\geq x}|U_0(x)|=0$, and there exists $x_-\in \B{R}$ and $\delta>0$ such that $U_0(x)\geq \delta$ for almost all $x<x_-$.
\end{defn}

\vspace{1em}

\begin{propn} Let $G:[0,1]\to \B{R}$ be a differentiable function such that $G(0)=G(u^\star)=G(1)=0$, $G(x)>0$ for $x\in (0,u^\star)$ and $G'(x)\leq G'(0)$ for $x\in [0,1]$, and consider the PDE 
\begin{equation}\label{PDEtoCompare} U_t = \frac12 U_{xx} + G(U).\end{equation}
Then any front-like initial condition $U_0(x)$ such that $\lim_{x\to-\infty}U_0(x)=1$ and $\lim_{x\to\infty}U_0(x)=0$ spreads up to the value $u^\star$ with speed at least $\sqrt{2G'(0)}$. Moreover, the minimal spreading speed is attained; that is, there exists an initial condition $U_0(x)$ which spreads up to the value $u^\star$ at speed $\sqrt{2G'(0)}$.%for each $c\geq \sqrt{2G'(0)}$, there exists an initial condition $U^c_0(x)$ such that $\lim_{x\to-\infty}U^c_0(x)=1$ and $\lim_{x\to\infty}U^c_0(x)=0$ such that the solution $U(x,t)$ to the PDE with initial condition $U^c_0(x)$ propagates up to $u^\star$ with speed $c$, and for $c<\sqrt{2G'(0)}$ there is no initial condition which propagates up to $u^\star$ with speed $c$. 
\end{propn}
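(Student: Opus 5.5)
The plan is to reduce to the classical KPP theory on $[0,u^\star]$ by comparison. I would define an auxiliary nonlinearity $\tilde G:[0,1]\to\B{R}$ with $\tilde G=G$ on $[0,u^\star]$ and $\tilde G\le G$ elsewhere in the relevant range. Concretely, I take $\tilde G(x)=G(u^\star x)/u^\star$ in rescaled variables. Equivalently, I work directly with the KPP equation $V_t=\frac12V_{xx}+G(V)$ on the invariant interval $[0,u^\star]$. There $G(0)=G(u^\star)=0$, $G>0$ on $(0,u^\star)$, and $G'(x)\le G'(0)$. So $W:=V/u^\star$ solves $W_t=\frac12W_{xx}+g(W)$ with $g(w)=G(u^\star w)/u^\star$. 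This is a standard KPP nonlinearity on $[0,1]$ with $g'(0)=G'(0)$ and $g'(w)\le g'(0)$. The spreading results of Hamel--Nadin \cite{hamelNadin} (the same source quoted above for travelling waves) then apply to $W$. Every front-like initial condition has $\lim_{t\to\infty}\inf_{x\le ct}W(x,t)=1$ for all $c<\sqrt{2g'(0)}=\sqrt{2G'(0)}$.

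For the lower bound, I take the given $U_0$ with $U_0\to1$ at $-\infty$ and $U_0\to0$ at $+\infty$. I set $V_0:=\min\{U_0,u^\star\}$, which is front-like: it is bounded by $u^\star$, tends to $0$ at $+\infty$, and is $\ge u^\star/2$ for $x$ sufficiently negative, so after rescaling it is front-like in the sense of \cite{hamelNadin}. Let $V$ solve $V_t=\frac12V_{xx}+G(V)$ with $V(\cdot,0)=V_0$. Since $0$ and $u^\star$ are equilibria, the maximum principle gives $0\le V\le u^\star$. Since $V_0\le U_0$ and both solve the same equation with $G$ Lipschitz on $[0,1]$, the parabolic comparison principle gives $V\le U$ for all $t$. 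The spreading result for $V$ then yields $\liminf_{t\to\infty}\inf_{x\le ct}U(x,t)\ge\lim_{t\to\infty}\inf_{x\le ct}V(x,t)=u^\star$ for every $c<\sqrt{2G'(0)}$. This is exactly condition 1 of the definition.

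For attainment, I would choose an initial condition whose right tail decays fast, for instance $U_0=\is_{x\le0}$, or a smooth version. Condition 1 is already established. For condition 2, I use the linear supersolution. Since $G'(x)\le G'(0)$ and $G(0)=0$, we have $G(u)\le G'(0)u$ for $u\in[0,1]$. Hence $\bar U(x,t)=\min\{1,e^{-\lambda(x-ct)}\}$ with $\lambda=\sqrt{2G'(0)}$ and $c=\sqrt{2G'(0)}$ is a supersolution; more simply, $U(x,t)\le e^{G'(0)t}(p_t\star U_0)(x)$. With $U_0=\is_{x\le0}$, this gives $U(x,t)\le e^{G'(0)t}\,\B{P}(B_t\ge x)\le e^{G'(0)t-x^2/2t}$. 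This bound tends to $0$ uniformly on $x\ge c t$ for any $c>\sqrt{2G'(0)}$. So $U$ spreads up to $u^\star$ at speed exactly $\sqrt{2G'(0)}$.

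The main obstacle is the first step: checking that the Hamel--Nadin spreading theorem applies to the truncated problem. In particular, I need that their front-like hypothesis and KPP conditions hold for $g$ on $[0,1]$ after rescaling, and that the condition $G'\le G'(0)$ on all of $[0,1]$ restricts correctly to $[0,u^\star]$. If their theorem requires $g$ to be defined and $C^1$ on a neighbourhood or to satisfy $g'(1)<0$, I would instead run the classical Aronson--Weinberger argument directly on $[0,u^\star]$. That argument compares with compactly supported subsolutions built from $\epsilon\cos$-type profiles on large intervals moving at speed $c<\sqrt{2G'(0)}$.
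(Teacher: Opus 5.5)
Your argument is correct. The lower bound follows the paper's proof: truncate to $\min\{U_0,u^\star\}$, treat the equation on the invariant interval $[0,u^\star]$ as a KPP problem, apply Hamel--Nadin, and compare. Your rescaling $g(w)=G(u^\star w)/u^\star$ makes explicit what the paper leaves implicit.

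For attainment you take a genuinely different route.

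\emph{The paper's route.} It builds an auxiliary KPP nonlinearity $G^+\ge G$ that agrees with $G$ on $[0,\epsilon]$. It takes the minimal-speed travelling wave $U^+_{c^\star}$ of $U_t=\frac12U_{xx}+G^+(U)$ as the initial datum. Comparison with respect to the nonlinearity then gives $U(x,t)\le U^+_{c^\star}(x-c^\star t)$.

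\emph{Your route.} You use only $G(u)\le G'(0)u$ on $[0,1]$ and compare with the linearised equation, through either $e^{G'(0)t}(p_t\star U_0)$ or $\min\{1,e^{-\lambda(x-ct)}\}$ with $\lambda=c=\sqrt{2G'(0)}$. Both supersolutions check out, and your Gaussian tail bound gives condition 2 for every $c>\sqrt{2G'(0)}$.

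\emph{What each buys.} Your route is more elementary. It needs neither the existence of KPP travelling waves nor a construction of $G^+$. That construction is delicate: as literally stated in the paper, $G^+>0$ on $[\epsilon,1]$, which conflicts with the requirement $G^+(1)=0$ for a front joining $1$ to $0$. Your argument also proves more. Every front-like datum dominated by $Ce^{-\sqrt{2G'(0)}x}$ spreads up to $u^\star$ at exactly speed $\sqrt{2G'(0)}$, for instance one with a compactly supported right tail. If you want a continuous datum, use a smooth version below $\is_{x\le 0}$, as you suggest. The paper's choice has the advantage that its witness is itself a travelling-wave profile, which ties the statement to the travelling-wave discussion in that section.
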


\begin{proof}
    We prove the above proposition by using comparison principles to bound the solution of the PDE (\ref{PDEtoCompare}) above and below. First we construct a lower bound to show that every initial condition $U_0$ spreads up to $u^\star$ with speed at least $\sqrt{2G'(0)}$. 
    We prove the above proposition by using comparison principles to bound the solution of the PDE (\ref{PDEtoCompare}) above and below. First we construct a lower bound to show that every initial condition $U_0$ spreads up to $u^\star$ with speed at least $\sqrt{2G'(0)}$. 
    
    To construct the lower bound, recall that solutions to the F-KPP equation are ordered with respect to their initial condition. That is, if $U_0(x) \leq \tilde{U}_0(x)$ for all $x\in \B{R}$, and $U(x,t)$ and $\tilde{U}(x,t)$ solve the PDE (\ref{PDEtoCompare}) with initial conditions $U_0(x)$ and $\tilde{U}_0(x)$ respectively, then $U(x,t)\leq \tilde{U}(x,t)$ for all $(x,t)\in \B{R}\times [0,\infty)$ (see for example \cite{kpp}, Theorem 2). Now fix a front-like initial condition $U_0$ with $\lim_{x\to-\infty}U_0(x)=1$ and $\lim_{x\to\infty}U_0(x)=0$, and define $U^-_0(x):=\min\{U_0(x),u^\star\}$, and let $U^-(x,t)$ be the solution to the PDE (\ref{PDEtoCompare}) with initial condition $U^-_0$. We can consider this as a scaled F-KPP with source term $G$ such that $G(0)=G(u^\star)=0$ and $G(x)>0$ for $x\in (0,u^\star)$ and $G'(x)$ maximal at $0$. %Therefore $U^-$ has travelling wave solutions $U^-(x,t)=\omega_c(x-ct)$ with $\lim_{z\to-\infty}\omega_c(z)=u^\star$ and $\lim_{z\to\infty}\omega_c(z)=0$ for every speed $c\geq \sqrt{2G'(0)}$. 
    Moreover, the initial condition $U^-_0$ is still front-like. Therefore the inequality (1.6) of \cite{hamelNadin} gives us that:
    $$\sqrt{2G'(0)} \leq \sup\left\{c\in \B{R}: \lim_{t\to\infty}\inf_{x\leq ct}U^-(x,t)=u^\star\right\},$$
    so that $U^-$ spreads up to $u^\star$ with speed at least $\sqrt{2G'(0)}$. Then by comparison:
    $$\lim_{t\to\infty}\inf_{x\leq ct}U(x,t)\geq \lim_{t\to\infty}\inf_{x\leq ct}U^-(x,t)\geq u^\star$$ for all $c<\sqrt{2G'(0)}$, therefore $U(x,t)$ spreads up to $u^\star$ with speed at least $\sqrt{2G'(0)}$. 
    
    Next, by bounding the solution above, we show that there exists an initial condition which spreads up to $u^\star$ at speed $\sqrt{2G'(0)}$. We do this by recalling that solutions to the F-KPP are ordered with respect to the source term $G$. That is, if $G(x)\leq H(x)$ for all $x\in \B{R}$, $U$ solves $U_t = \frac12 U_{xx}+ G(U)$ with initial condition $U_0$, and $V$ solves $V_t = \frac12 V_{xx} + H(V)$ with initial condition $U_0$, then $U(x,t)\leq V(x,t)$ for all $(x,t)\in \B{R}\times [0,\infty)$ (see for example \cite{kpp}, Theorem 3). 
    
    So fix $\epsilon \in (0,u^\star)$ and let $G^+$ be such that $G^+(x)=G(x)$ for $x\in [0,\epsilon]$, $G^+(x)>0$ and $G^+(x)\geq G(x)
    $ for $x\in [\epsilon, 1]$, and $\frac{d}{dx}G^+(x)\leq \frac{d}{dx}G^+(0)$ for $x\in [0,1]$. Therefore the PDE $U_t = \frac12 U_{xx} + G^+(U)$ has, for every speed $c\geq \sqrt{2G'(0)}$, a travelling wave solution $U^+(x,t)=U^+_c(x-ct)$ such that $\lim_{z\to-\infty}U^+_c(z)=1$ and $\lim_{z\to\infty}U^+_c(z)=0$. Fix $c^\star = \sqrt{2G'(0)}$, and let $U(x,t)$ be the solution to (\ref{PDEtoCompare}) with initial condition $U^+(x,0)=U^+_{c^\star}(x)$. Then by comparison, for any $c>c^\star=\sqrt{2G'(0)}$:
    $$\lim_{t\to\infty}\sup_{x\geq ct}U(x,t) \leq \lim_{t\to\infty}\sup_{x\geq ct}U^+_{c^\star}(x-c^\star t)=0.$$
    Combining this with the previous result confirms that the solution $U(x,t)$ to (\ref{PDEtoCompare}) with initial condition $U^+_{c^\star}(x)$ spreads up to $u^\star$ at speed $\sqrt{2G'(0)}$, thus concluding the proof of the proposition. \end{proof}

    This therefore gives a weaker sense in which the weak selection principle holds under the assumptions \textit{(A1)} and \textit{(A2)}.
    
\appendix
\section{Hydrodynamic limit of branching Brownian motion}
In this appendix, we provide a brief proof of a hydrodynamic limit result for branching Brownian motion for completeness. Although the author believes that the result is well-known in the field, a precise statement and proof of the specific result cannot be found. A somewhat similar result is proven in Appendix A of \cite{beckman}. Many of the techniques are identical to those used in the main body of this paper.

\vspace{1em}

\begin{theorem} \label{bbmHydroAppendix}
    Let $\rho^N$ be a sequence of probability measures such that $\rho^N$ is the sum of $N$ atoms of weight $1/N$, and such that $\rho^N\Rightarrow \rho\ll\lambda$ as $N\to\infty$. Consider a branching Brownian motion $\{B^N_i(t):i\in \C{U}^N(t)\}_{t\geq 0}$ which initially has $N$ particles and empricial distribution $\rho^N$, and in which each particle branches at time-dependent rate $r(t)$. Define
    $$\eta_t^N:=\frac1N\sum_{i\in \C{U}^N(t)}\delta_{B^N_i(t)},$$
    so that $\eta_0^N=\rho^N$, and $(\eta_t^N)_{t\geq 0}$ is the empirical measure-valued process describing the system. Let $\C{L}^N_T$ be the law of $(\eta_t^N)_{t\in [0,T]}$. Then $\C{L}^N_T$ converges weakly to $\C{L}^\infty_T$ in the Skorokhod topology, and if $(\eta_t)_{t\in [0,T]}\sim \C{L}^\infty_T$, then $\eta_t(dx)=u(x,t)dx$ where $u$ is the unique classical solution to the PDE $u_t = \frac12 u_{xx}+ r(t)u$ for $(x,t)\in \B{R}\times (0,T)$ and $u(x,0)=\rho(x)$. 
\end{theorem}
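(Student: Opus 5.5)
We follow the same three-step scheme as in the proof of Theorem \ref{psirNGeneralLimit}: a martingale representation, then tightness, then identification of the limit via uniqueness. The only new feature is that the total mass $\langle \eta^N_t,1\rangle$ is no longer identically $1$, so compact containment needs a moment bound.

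\textbf{Step 1 (representation).} Apply It\^o's formula between branching times and sum the jumps. For $f\in\C{BC}^{2,1}(\B{R}\times[0,\infty),\B{R})$ this gives
\begin{align*}
\langle \eta^N_t,f(\cdot,t)\rangle=\langle\rho^N,f(\cdot,0)\rangle+\int_0^t\langle \eta^N_s,\tfrac12 f_{xx}(\cdot,s)+f_t(\cdot,s)+r(s)f(\cdot,s)\rangle ds+M^{N,W}_t+M^{N,P}_t .
\end{align*}
Here each branching event adds $\frac1N f(B_i,s)$, and these events occur at total rate $r(s)N\langle\eta^N_s,1\rangle$. Taking $f\equiv1$ shows that $\langle\eta^N_t,1\rangle$ is a nonnegative martingale times $e^{\int_0^t r}$, so $\B{E}\langle\eta^N_t,1\rangle=e^{\int_0^tr}$. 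The quadratic variations satisfy
\begin{align*}
\B{E}[M^{N,W}]_t\le \frac{\|f_x\|^2_\infty}{N}\int_0^t e^{\int_0^s r}ds,\qquad \B{E}[M^{N,P}]_t\le \frac{\|f\|^2_\infty\|r\|_\infty}{N}\int_0^t e^{\int_0^s r}ds,
\end{align*}
and both tend to $0$ as $N\to\infty$.

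\textbf{Step 2 (tightness).} We work again in $\C{D}([0,T],\C{M}^w_F(\bar{\B{R}}))$. For compact containment, note that $\{\mu:\mu(\bar{\B{R}})\le K\}$ is compact. Doob's maximal inequality applied to the martingale $e^{-\int_0^t r}\langle\eta^N_t,1\rangle$ gives $\B{P}(\sup_{t\le T}\langle\eta^N_t,1\rangle>K)\le e^{\|r\|_\infty T}/K$, uniformly in $N$. We then verify Aldous' criterion exactly as in Proposition \ref{tightness}. The drift increment over $[\tau,\tau+\delta]$ is bounded by $C_f(1+\|r\|_\infty)\delta\sup_{t\le T}\langle\eta^N_t,1\rangle$. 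On the event that this supremum is at most $K$ the increment is small, and the complement has small probability. The martingale terms are handled by the BDG inequality together with Step 1.

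\textbf{Step 3 (identification and uniqueness).} Take a subsequential limit and use Skorokhod representation. Continuity in time follows as in Lemma \ref{asContinuity}. The limit equation is linear in $\eta$, so passing to the limit only requires $\langle\eta^N_s,g\rangle\to\langle\eta_s,g\rangle$ for bounded continuous $g$, plus dominated convergence in $s$; the domination is $\sup_t\langle\eta^N_t,1\rangle$, which is uniformly integrable since its second moment is bounded by Doob's $L^2$ inequality.

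\textbf{Main obstacle: no mass at $\pm\infty$.} We must show the limit puts no mass at $\pm\infty$, which was previously obtained by comparison with exactly this BBM. The plan is to use the weak equation directly, with test functions on $\bar{\B{R}}$. Take a smooth $f_R$ with $f_R=0$ on $[-R,R]$, $f_R=1$ near $\pm\infty$, and $f_R$ having derivatives of order $1/R$. Then $\B{E}\langle\eta^N_t,f_R\rangle$ is at most $e^{\|r\|_\infty t}$ times the probability that a Brownian motion started from $\rho^N$ lies outside $[-R,R]$ at time $t$ (many-to-one), and this tends to $0$ as $R\to\infty$ uniformly in $N$ by tightness of $\rho^N$. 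Fatou's lemma then gives $\eta_t(\{\pm\infty\})=0$ almost surely.

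Finally, with test function $f(x,s)=(p_{t-s}\star\varphi)(x)\,e^{\int_s^t r}$, which solves the backward equation $f_s+\frac12f_{xx}+rf=0$, the limit equation gives
\begin{align*}
\langle\eta_t,\varphi\rangle=e^{\int_0^tr}\langle\rho,p_t\star\varphi\rangle .
\end{align*}
This identifies $\eta_t$ uniquely as having density $u(x,t)=e^{\int_0^t r}(p_t\star\rho)(x)$, which is the classical solution of $u_t=\frac12u_{xx}+r(t)u$. Since every subsequential limit is this Dirac mass, the whole sequence $\C{L}^N_T$ converges.
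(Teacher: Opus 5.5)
Your proof is correct, and its skeleton (It\^o/martingale representation, Aldous' criterion, Skorokhod representation, identification by test functions solving $f_s+\frac12 f_{xx}+rf=0$) is the paper's. The genuine difference is the topology, which in turn changes how compactness and mass at infinity are handled.

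The paper proves tightness in $\mathcal{D}([0,T],\mathcal{M}^v_F(\mathbb{R}))$ with the \emph{vague} topology, via Roelly-Coppoletta's criterion with $C_0(\mathbb{R})$ test functions. This removes any need for compact containment or tail control. However, it yields only vague convergence, which by itself does not exclude mass escaping to $\pm\infty$. That is exactly the property $\hat\mu^\infty_t(\{-\infty,\infty\})=0$ which Lemma~\ref{absContEveryTime} draws from this theorem; closing it needs an extra step, e.g.\ convergence of $\langle\eta^N_t,1\rangle$ to $e^{\int_0^t r}$.

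You instead stay in $\mathcal{M}^w_F(\bar{\mathbb{R}})$, as in Proposition~\ref{tightness}. You pay with a compact-containment bound and your many-to-one tail estimate. In return you get weak convergence with no mass at infinity, which is what the main body actually uses. Your explicit density $e^{\int_0^t r}p_t\star\rho$ replaces the paper's abstract uniqueness step, but it is the same backward-equation computation.

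Two simplifications are available. First, binary BBM never loses particles, so $\sup_{t\le T}\langle\eta^N_t,1\rangle=\langle\eta^N_T,1\rangle$ and Markov's inequality already suffices in place of Doob. Second, with the backward-equation test function the drift vanishes identically. So $\langle\eta^N_t,\varphi\rangle-\langle\rho^N,f(\cdot,0)\rangle$ is a martingale with second moment $O(1/N)$. Combined with almost sure convergence under the Skorokhod coupling, this gives the identity without dominated convergence or uniform integrability. Identification at each fixed $t$ then extends to all $t$ through a countable dense set of times and continuity of the limit.
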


\begin{proof}
    Note that the BBM process with branching rate $r$ is just the $(\psi,r,N)$-BBM without deletion of particles. Therefore reasoning exactly as in Proposition \ref{weakRepPropn}, we can write
    \begin{align} \label{bbmRep}\langle \eta_t^N,f(\cdot,t)\rangle = \langle \rho^N,f(\cdot,t)\rangle + \int_0^t \langle \eta_s^N,\frac12 f_{xx}(\cdot,s)+f_t(\cdot,s)+r(s)f(\cdot,s)\rangle ds + \tilde{M}_t^{N,W}+\tilde{M}^{N,P}_t    \end{align}
    where $\tilde{M}^{N,W}$ and $\tilde{M}^{N,P}$ are local martingales with $\B{E}[[\tilde{M}^{N,W}]_t]$ and $\B{E}[[\tilde{M}^{N,P}]_t]$ converging to $0$ as $N\to\infty$. 

    Next we prove tightness. We will prove tightness of the sequence $((\eta_t^N)_{t\in [0,T]})_{N=1,2,\ldots}$ in the space $\C{D}([0,T],\C{M}^v_F(\B{R}))$, where $\C{M}_F^v(\B{R})$ is the space of finite measures on $\B{R}$ with the vague topology. It is shown by Roelly-Coppoletta \cite{roellyCop} that to show tightness in this space it is sufficient to show tightness of $(\langle \eta_t^N,f\rangle)_{t\in [0,T]}$ for all $f$ in a dense subset of $C_0(\B{R})$, the space of continuous functions vanishing at infinity. We will take for our dense subset of $C_0(\B{R})$ the set $\C{BC}^{2,1}(\B{R}\times [0,\infty),\B{R})\cap C_0(\B{R})$. As in the proof of Proposition \ref{tightness}, by Aldous' criterion (Theorem 16.10, \cite{billingsley}), to prove tightness of $(\langle \eta_t^N,f\rangle)_{t\in [0,T]}$, it is sufficient to show two conditions A and B. 

    Now the expected number of particles in the BBM with branching rate $r$ and with initially $N$ particles is $N\exp(\int_0^t r(s)ds)$. Therefore by the Markov inequality:
    \begin{align*}
        \B{P}(\sup_{0\leq t\leq m}|\langle \eta_t^N,f\rangle|\geq a)\leq \frac1a\B{E}[\sup_{0\leq t\leq m}|\langle \eta_t^N,f\rangle|]\leq \frac{\|f\|_\infty}{a}\exp(\int_0^m r(s)ds),
    \end{align*}
    so that for every $m$, $\lim_{a\to\infty}\limsup_{N\to\infty}\B{P}(\sup_{0\leq t\leq m}|\langle\eta_t^N,f\rangle|\geq a)=0$. This proves condition A. For condition B, take $\epsilon,\eta,m>0$ and stopping time $\tau\leq m$. Then by \eqref{bbmRep}, since $f$, $f_{xx}$ and $r$ are all bounded, say by $C$, we have:
    \begin{align*}|\langle \eta^N_{\tau+\delta},f\rangle - \langle \eta^N_\tau,f\rangle|\leq \int_{\tau}^{\tau+\delta}(C+C^2)\langle \eta_s^N,1\rangle ds + |\tilde{M}_{\tau+\delta}^{N,W}-\tilde{M}_{\tau}|+|\tilde{M}_{\tau+\delta}^{N,P}-\tilde{M}_{\tau}^{N,P}|.\end{align*}
    As before, we noted that $\langle \eta_t^N,1\rangle$ has expectation $\exp(\int_0^t r(s)ds)$, therefore by the Markov inequality an the Burkholder-Davis-Gundy inequality, there exists $\kappa$ such that:
    \begin{align*}
        \B{P}(|\langle \eta_{\tau+\delta}^N,f\rangle &- \langle \eta_\tau^N,f\rangle|\geq \epsilon)\\
        &\leq \B{P}(C(1+C)\delta\langle \eta_{m+\delta}^N,1\rangle\geq \epsilon/3) + \B{P}(|\tilde{M}_{\tau+\delta}^{N,W}-\tilde{M}_{\tau}|^2>\epsilon^2/9)+\B{P}(|\tilde{M}_{\tau+\delta}^{N,P}-\tilde{M}_{\tau}^{N,P}|^2>\epsilon^2/9)\\
        &\leq \frac{1}{\epsilon^2}\left(3C(1+C)\epsilon\delta\B{E}[\langle \eta_{m+\delta}^N,1\rangle]+9\B{E}[|\tilde{M}_{\tau+\delta}^{N,W}-\tilde{M}_{\tau}|^2]+9\B{E}[|\tilde{M}_{\tau+\delta}^{N,P}-\tilde{M}_{\tau}^{N,P}|^2]\right)\\
        &\leq \frac{1}{\epsilon^2}\left(3C(1+C)\epsilon \delta e^{(m+\delta)\|r\|_\infty}+9\kappa\B{E}[[\tilde{M}^{N,W}]_\delta]+9\kappa\B{E}[[\tilde{M}^{N,P}]_\delta]\right)\xrightarrow[\delta\to 0,N\to\infty]{}0,
    \end{align*}
    which proves condition B. Therefore we can conclude that the sequence $((\eta_t^N)_{t\in [0,T]})_{N=1,2,\ldots}$ is tight. Therefore $(\C{L}^N_T)_{N=1,2,\ldots}$ has a subsequential limit $\C{L}^\infty_T$ to which it converges weakly. Since $N\langle \eta^N_T,1\rangle$ has the law of the sum of $N$ independent geometric random variables with parameter independent of $N$, thus $\B{E}[\langle \eta_T^N,1\rangle^2]$ has an upper bound independent of $N$. Thus for $0<s<t<T$, we can bound
    \begin{align*}\B{E}[|\langle \eta^N_s,f\rangle-\langle \mu_t^N,f\rangle|^2]\leq 9(C^2(1+C)^2(t-s)^2\B{E}[\langle \eta_T^N\rangle^2]+\B{E}[|\tilde{M}^{N,W}_t-\tilde{M}^{N,W}_s|^2]+\B{E}[|\tilde{M}^{N,P}_t-\tilde{M}^{N,P}_s|^2]),\end{align*}
    and hence, as in the proof of Proposition \ref{asContinuity}, by Fatou's lemma, there exists a constant $\tilde{C}$ such that $$\B{E}[|\langle \eta_s^\infty,f\rangle - \langle \eta_t^\infty\rangle|^2]\leq \tilde{C}|s-t|^2$$ for all $0<s<t<T$. Therefore by the Kolmogorov continuity theorem $\langle \eta_t^\infty,f\rangle$ has a continuous modification and hence, as it is also cadlag, $t\mapsto \langle \eta_t^\infty,f\rangle$ is almost surely continuous (see for example Theorem 1 of \cite{schilling}). This holds for all $f\in \C{BC}^{2,1}(\B{R}\times [0,\infty),\B{R})\cap C_0(\B{R})$, and hence $t\mapsto \eta_t^\infty$ is almost surely continuous with respect to the vague topology. Now by the Skorokhod representation theorem, there exist $(\tilde{\eta}^N_t)_{t\in [0,T]}$ and $(\tilde{\eta}^\infty_t)_{t\in [0,T]}$ defined on the same probability space $(\Omega,\C{F},\B{P})$ such that $\tilde{\eta}^N$ converges to $\tilde{\eta}^\infty$ in the Skorokhod topology $\B{P}$-a.s., with $(\tilde{\eta}^N_t)_{t\in [0,T]}\sim \C{L}^N_T$ and $(\tilde{\eta}^\infty_t)_{t\in [0,T]}\sim \C{L}^\infty_T$. Since $(\tilde{\eta}^N_t)_{t\in [0,T]}$ converges almost surely to $(\tilde{\eta}^\infty_t)_{t\in [0,T]}$ in the Skorokhod topology and $t\mapsto \tilde{\eta^\infty_t}$ is almost surely continuous on $[0,T]$, thus for any $f(x,t)\in \C{BC}_0^{2,1}(\B{R}\times [0,\infty),\B{R})$, $\langle \tilde{\eta}^N_t,f(\cdot,t)\rangle$ converges $\langle \tilde{\eta}^\infty_t,f(\cdot,t)\rangle$ for all $t\in [0,T]$ almost surely. Hence by the dominated convergence theorem:
    \begin{align*}
        \tilde{G}(\tilde{\eta}^N,f,t)&:=\langle \tilde{\eta}_t^N,f(\cdot,t)\rangle - \langle \rho^N,f(\cdot,t)\rangle - \int_0^t \langle \tilde{\eta}_s^N,\frac12 f_{xx}(\cdot,s)+f_t(\cdot,s)+r(s)f(\cdot,s)\rangle ds \\
        &\xrightarrow[N\to\infty]{\B{P}\text{-a.s.}}\langle \tilde{\eta}_t^\infty,f(\cdot,t)\rangle -\langle \rho,f(\cdot,t)\rangle - \int_0^t \langle \tilde{\eta}^\infty_s,\frac12 f_{xx}(\cdot,s)+f_t(\cdot,s)+r(s)f(\cdot,s)\rangle ds.
    \end{align*}
    Moreover, by \eqref{bbmRep} and the Burkholder-Davis-Gundy inequality, $\B{E}[\tilde{G}(\tilde{\eta}^N,f,t)^2] \leq 4\B{E}[[\tilde{M}^{N,W}]_t]+4\B{E}[[\tilde{M}^{N,P}]_t]$ which converges to $0$ as $N\to\infty$. Now fix $t$ and $f$, and observe that 
    \begin{align*}|\tilde{G}(\tilde{\eta}^N,f,t)|=|\langle \tilde{\eta}_t^N,f(\cdot,t)\rangle - \langle \rho^N,f(\cdot,t)\rangle &- \int_0^t \langle \tilde{\eta}_s^N,\frac12 f_{xx}(\cdot,s)+f_t(\cdot,s)+r(s)f(\cdot,s)\rangle ds|\\
    &\leq C\langle \tilde{\eta}^N_t,1\rangle + C + (2C + C^2)t\langle \tilde{\eta}^N_t,1\rangle=:\Sigma_N.\end{align*}
    Then for any $K>0$:
    \begin{align*}
        \B{E}[\Sigma_N\is_{\{|\Sigma_N|\geq K\}}]\leq \B{E}\left[\frac{\Sigma_N^2}{K}\is_{\{|\Sigma_N|\geq K\}}\right]\leq K^{-1}\B{E}[\Sigma_N^2].
    \end{align*}
    Recalling that $\B{E}[\langle \eta_t^N,1\rangle]$ and $\B{E}[\langle \eta_t^N,1\rangle^2]$ have bounds independent of $N$, thus $\lim_{K\to\infty}\sup_{N}\B{E}[|\Sigma_N|\is_{\{|\Sigma_N|\geq K\}}]=0$, which is to say that $(\Sigma_N)_{N=1,2,\ldots}$ is uniformly integrable. As a result $(|G(\tilde{\eta}^N,f,t)|)_{N=1,2,\ldots}$ is uniformly integrable. Since $(|G(\tilde{\eta}^N,f,t)|)_{N=1,2,\ldots}$ is uniformly integrable and converges to $0$ in expectation, thus it converges to $0$ almost surely. Therefore $\eta_t^\infty$ is almost surely a solution to the equation:
    \begin{align} \label{bbmWeakSoln}
        \langle \eta_t^\infty,f(\cdot,t)\rangle = \langle \rho,f(\cdot,0)\rangle + \int_0^t \langle \eta_s^\infty,\frac12 f_{xx}(\cdot,s)+f_t(\cdot,s)+r(s)f(\cdot,s)\rangle ds
    \end{align}
    for any $f(x,t)\in \C{BC}^{2,1}_0(\B{R}\times [0,\infty),\B{R})$. Since the unique classical solution of the PDE $u_t = \frac12 u_{xx} + r(t)u$ with $u(x,0)=\rho(x)$ certainly solves \eqref{bbmWeakSoln}, it only remains to show that there is at most one solution to \eqref{bbmWeakSoln}.

    Suppose that $\eta_t^\infty$ and $\hat{\eta}_t^\infty$ are two solutions to \eqref{bbmWeakSoln}. Let $\phi:\B{R}\to\B{R}$ be a smooth compactly supported test function, and let $f$ be the solution of the backwards heat equation $f_t=\frac12f_{xx}+rf$ with terminal condition $f(x,t)=\phi(x)$. Then it immediately follows that $\langle \eta_t^\infty, \phi\rangle = \langle \hat{\eta}_t^\infty, \phi\rangle$. Thus $\eta_t^\infty=\hat{\eta}_t^\infty$ for any $t$. Hence $\eta^\infty_t(dx)=u(x,t)dx$ where $u$ is the unique classical solution to the PDE $u_t=\frac12 u_{xx}+r(t)u$ with $u(x,0)dx=\rho(dx)$, thus concluding the proof. 
\end{proof}

\section*{Acknowledgements}
The author thanks Julien Berestycki for his careful and insightful supervision of the project, and Jo\~{a}o Luiz de Oliveira Madeira for his generous help and advice on a number of parts of this work. The author also wishes to express gratitude to Paul de Lambert for fruitful discussions relating to this work and his permission to produce the coupling idea contained in the proof of Theorem \ref{asympSpeedTheorem}, and Matthias Winkel and Christina Goldschmidt for their comments on an earlier version for this work. 

\bibliographystyle{plain}
\bibliography{bps}

\end{document}